\newcommand{\drawfile}[1]
{
  \ifthenelse{\boolean{pdfOutput}}
  {
        \epsfig{figure={#1.pdf},width=2in}
  }
  {
  \epsfig{file={#1}.eps,width=2in}%
  }
}
\newtheorem{theorem}{Theorem}
\newtheorem{lemma}{Lemma}
\newtheorem{proposition}{Proposition}
\theoremstyle{remark}
\newtheorem{definition}{Definition}
\newtheorem{example}{Example}
\newtheorem{remark}{Remark}
\newtheorem{Comment}{Comment}
\newcounter{remarkCounter}
\newcommand{\head}[1]{\textbf{#1}}
\newlength{\setBracketHeight}
\newcommand{\hook}{\ensuremath{\mathbin{ \hbox{\vrule height1.4pt
        width4pt depth-1pt \vrule height4pt width0.4pt depth-1pt}}}}
\newcommand{\pd}[2]{\ensuremath{\frac{\partial{#1}}{\partial{#2}}}}
\newcommand{\R}[1]{\ensuremath{\mathbb{R}^{#1}}}
\newcommand{\C}[1]{\ensuremath{\mathbb{C}^{#1}}}
\newcommand{\Q}[1]{\ensuremath{\mathbb{Q}^{#1}}}
\newcommand{\Z}[1]{\ensuremath{\mathbb{Z}^{#1}}}
\newcommand{\Ha}[1]{\ensuremath{\mathbb{H}^{#1}}}
\newcommand{\Gr}[2]{\ensuremath{\operatorname{Gr}_{#1}\mathbb{C}^{#2}}}
\newcommand{\Fl}[2]{\ensuremath{\mathbb{F}_{#1}\mathbb{C}^{#2}}}
\newcommand{\LagFl}[2]{\ensuremath{\operatorname{Lag}_{#1}\mathbb{C}^{#2}}}
\newcommand{\NullFl}[2]{\ensuremath{\operatorname{Null}_{#1}\mathbb{C}^{#2}}}
\newcommand{\SO}[1]{\operatorname{SO}\left({#1}\right)}
\newcommand{\so}[1]{\mathfrak{so}\left({#1}\right)}
\newcommand{\GL}[1]{\operatorname{GL}\left({#1}\right)}
\newcommand{\gl}[1]{\mathfrak{gl}\left({#1}\right)}
\newcommand{\SL}[1]{\operatorname{SL}\left({#1}\right)}
\newcommand{\PSL}[1]{\mathbb{P}\SL{#1}}
\newcommand{\Un}[1]{\operatorname{U}\left({#1}\right)}
\newcommand{\un}[1]{\ensuremath{\mathfrak{u}\left({#1}\right)}}
\newcommand{\SU}[1]{\operatorname{SU}\left({#1}\right)}
\newcommand{\su}[1]{\mathfrak{su}\left({#1}\right)}
\newcommand{\Symp}[1]{\ensuremath{\operatorname{Sp}\left({#1}\right)}}
\newcommand{\Lm}[2]{\ensuremath{\Lambda^{#1} \left ( {#2} \right )}}
\newcommand{\nForms}[2]{\ensuremath{\Omega^{#1} \left ( {#2} \right
    )}}
\newcommand{\Cohom}[2]{\ensuremath{ H^{#1} \left ( {#2} \right
    )}}
\DeclareMathOperator{\Ad}{Ad}
\newcommand{\trans}[1]{{}^t{#1}} \newcommand{\bari}{\bar{\imath}}
\newcommand{\barj}{\bar{\jmath}}
\DeclareMathOperator{\tr}{tr}
\renewcommand{\Im}{\operatorname{Im}}
\newcommand{\Proj}[1]{\mathbb{P}^{#1}}
\newcommand{\Aff}[1]{\mathbb{A}^{#1}}
\newcommand{\OO}[1]{
  \ensuremath{
    \mathcal{O}
    \ifthenelse{\equal{#1}{0}}
      {}
      {\left({#1}\right)}
  }
}
\newcommand{\OOp}[2]{
  \ensuremath{
    \mathcal{O}
    \ifthenelse{\equal{#1}{0}}
      {}
      {\left({#1}\right)}
    \ifthenelse{\equal{#2}{1}}
      {}
      {^{\oplus{#2}}}
  }
}
\newcommand{\extraSection}[2]
{
\ifthenelse{\boolean{abridged}}
  {
  }
  {
    \section{#1}
    \begin{center}
    \emph{This section will not be referred to
    subsequently, and may be skipped.}
    \end{center}
    \par{#2}
  }
}
\newcommand{\extraSubsection}[2]
{
\ifthenelse{\boolean{abridged}}
  {
  }
  {
    \subsection{#1}
    \begin{center}
    \emph{This subsection will not be referred to
    subsequently, and may be skipped.}
    \end{center}
    \par{#2}
  }
}
\newcommand{\extraStuff}[1]
{
\ifthenelse{\boolean{abridged}}
  {
  }
  {
    {#1}
  }
}
\def\cprime{$'$}
\newcommand{\Gtot}{\ensuremath{G}}
\newcommand{\gtot}{\mathfrak{g}}
\newcommand{\Gstruc}{P}
\newcommand{\gstruc}{\mathfrak{p}}
\newcommand{\SSPart}{L}
\newcommand{\sspart}{\mathfrak{l}}
\newcommand{\AbPart}{A}
\newcommand{\abpart}{\mathfrak{a}}
\newcommand{\RedPart}{LA}
\newcommand{\redpart}{\mathfrak{l} \oplus \mathfrak{a}}
\newcommand{\NilPart}{N}
\newcommand{\nilpart}{\ensuremath{\mathfrak{n}}}
\newcommand{\nil}[1]{\ensuremath{{#1}^{\NilPart}}}
\newcommand{\semib}[1]{\ensuremath{{#1}^{\Gtot/\Gstruc}}}
\newcommand{\Cartan}{\ensuremath{H}}
\newcommand{\cartan}{\ensuremath{\mathfrak{h}}}
\newcommand{\Borel}{\ensuremath{B}}
\newcommand{\borel}{\ensuremath{\mathfrak{b}}}
\newcommand{\cpt}[1]{%
        \ifthenelse{\equal{#1}{\RedPart} \or \equal{#1}{\redpart}}
        {\ensuremath{(#1)^c}}
        {\ensuremath{#1^c}}
}
\newcommand{\red}[1]{\ensuremath{{#1}^{\RedPart}}}
\DeclareMathOperator{\gr}{gr}
\newcommand{\pin}{\bullet}
\newcommand{\pout}{\times}
\begin{document}

\title[Characteristic forms]{Characteristic forms of complex Cartan geometries}
\author{Benjamin McKay}
\address{University College Cork \\ National University of Ireland}
\date{\today} 
\begin{abstract}
We calculate relations on characteristic classes which are
obstructions preventing closed K\"ahler manifolds from carrying
holomorphic Cartan geometries. We apply these relations to give
global constraints on the phase spaces of complex analytic
determined and underdetermined systems of differential equations.
\end{abstract}
\maketitle
\tableofcontents
\section{Introduction}
This article cuts away the overgrown sheaf theory from the garden of
parabolic geometry. Gunning \cite{Gunning:1978} and Kobayashi and
Ochiai \cite{KobayashiOchiai:1980} p. 78 obtained relations on
characteristic classes for normal projective connections following
the Thomas theory of normal projective connections, instead of the
Cartan theory, employing sheaves instead of principal bundles.
Kobayashi and Ochiai \cite{Kobayashi/Ochiai:1981} p. 207 extended
this approach to obtain relations on characteristic classes for
certain holomorphic $G$-structures.  Cartan's theory generalizes
easily to Cartan geometries, and manages abnormalcy without extra
effort. We use the Cartan theory to study relations
between characteristic classes.

\begin{theorem}\label{theorem:QuotientRing}
The ring of characteristic classes of a Cartan geometry on a
K\"ahler manifold is a quotient of the ring of characteristic forms
of the model via an explicit ring morphism.
\end{theorem}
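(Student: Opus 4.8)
The plan is to build the asserted ring morphism directly from the Cartan connection by treating it as a Weil-algebra homomorphism, and then to use the K\"ahler hypothesis to force the model's relations. Write the geometry as a principal $P$-bundle $\pi\colon \mathcal B\to M$ carrying the Cartan connection $\omega$, a $\mathfrak g$-valued holomorphic $1$-form whose curvature $\Omega=d\omega+\tfrac12[\omega,\omega]$ is semibasic and $\Ad(P)$-equivariant. The first step is to observe that the pair $(\omega,\Omega)$ satisfies exactly the structure and Bianchi identities defining the Weil algebra $W(\mathfrak g)$, so that $\theta\mapsto\omega$, $\mu\mapsto\Omega$ determines a differential graded algebra map $c_\omega\colon W(\mathfrak g)\to\Omega^\bullet(\mathcal B)$. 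This is the mechanism distinguishing a Cartan geometry from a bare $P$-bundle: an ordinary connection yields only a map out of $W(\mathfrak p)$, whereas the soldering part of $\omega$ fills out all of $\mathfrak g$.

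Next I would check that $c_\omega$ carries $\mathfrak p$-basic elements to basic forms, i.e.\ forms descending to $M$. Equivariance of $c_\omega$ under $P$ is immediate from equivariance of $\omega$ and $\Omega$; the horizontality is where the defining axioms enter. For $X\in\mathfrak p$ the fundamental field $\zeta_X$ satisfies $\omega(\zeta_X)=X$ and, because the Cartan curvature is semibasic, $\iota_{\zeta_X}\Omega=0$. These are precisely the two identities making the contraction operators intertwine, $\iota_{\zeta_X}\circ c_\omega=c_\omega\circ\iota_X$, so that $\mathfrak p$-horizontal classes map to horizontal forms. Passing to cohomology yields a ring homomorphism $c_\omega^{*}\colon H^\bullet\bigl(W(\mathfrak g)_{\mathfrak p\text{-basic}}\bigr)\to H^\bullet(M)$ whose source I would identify, by the standard computation for the contractible free $P$-action, with the universal characteristic ring $H^\bullet(BP)$, and whose image is the characteristic ring of the geometry on $M$.

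Running the same construction on the model is the key comparison. There $\mathcal B=G$, $\omega$ is the Maurer--Cartan form, and flatness $\Omega=0$ forces $c_{\omega}$ to factor through the Chevalley--Eilenberg quotient $W(\mathfrak g)\to\Lambda\mathfrak g^{*}$; its $\mathfrak p$-basic part computes the invariant forms on $G/P$ and identifies $R_{\mathrm{model}}$ with the image of $H^\bullet(BP)$ in $H^\bullet(G/P)$. The theorem is therefore equivalent to the kernel inclusion $\ker c_{\omega_G}^{*}\subseteq\ker c_\omega^{*}$: every universal class that dies on $G/P$ must die on $M$. A class in $\ker c_{\omega_G}^{*}$ is, at the cochain level, exact once the curvature generator $\mu$ is set to zero, so lifting the algebraic primitive back to $W(\mathfrak g)$ writes $c_\omega(z)$ as an exact form plus correction terms each carrying at least one factor of the Cartan curvature $\Omega$.

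The hard part is to show these curvature corrections do not disturb the relations in cohomology, and this is exactly where compactness and the K\"ahler condition are indispensable. Since $\omega$ is holomorphic, $\Omega$ is of type $(2,0)$, so every correction term is a form of type $(p,0)$ with $p>0$; by Hodge theory on a compact K\"ahler manifold such a $d$-closed form represents a class in $H^{p,0}(M)$, a summand disjoint from the Hodge type $(k,k)$ in which genuine characteristic classes live. I would make this precise by representing the characteristic classes on $M$ by their harmonic $(k,k)$-components and observing that the $(\ast,0)$ corrections contribute nothing to that component, whence $\ker c_{\omega_G}^{*}\subseteq\ker c_\omega^{*}$ and $c_\omega^{*}$ descends to the advertised surjection $R_{\mathrm{model}}\twoheadrightarrow R_M$. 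The principal obstacle to anticipate is precisely the bookkeeping of these mixed-type correction terms: one must confirm that the $\partial\bar\partial$-lemma, and not merely type separation, is available to trivialize them whenever a correction is itself $d$-exact rather than harmonic, so that the morphism is well defined on the ring of characteristic forms and not only on isolated generators.
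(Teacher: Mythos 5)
Your first step is fine as far as it goes: the Cartan connection does give a map $c_\omega\colon W(\gtot)\to\Omega^{\bullet}(E)$ commuting with $d$, with the contractions $\iota_{\vec A}$ and with the Lie derivatives for $A\in\gstruc$ (semibasicness of the Cartan curvature is exactly what is needed). The proposal breaks down at the two identifications you then make. First, $H^{\bullet}\bigl(W(\gtot)_{\gstruc\text{-basic}}\bigr)\cong H^{\bullet}(B\Gstruc)$ is false here: the ``standard computation'' requires a $\gstruc$-equivariant complement to $\gstruc$ in $\gtot$ (i.e.\ $\gstruc$ reductive in $\gtot$), and a parabolic subalgebra never admits one. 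Concretely, take $\gtot=\slLie{2,\C{}}$ with basis $h,e,f$ and $\gstruc=\borel=\langle h,e\rangle$: the degree-two horizontal subspace of $W(\gtot)$ is spanned by the curvature generators $\mu^h,\mu^e,\mu^f$, and no nonzero combination is $\borel$-invariant (weights force a multiple of $\mu^h$, but $L_e\mu^h=-\mu^f\neq 0$), so the basic complex vanishes in degree two, while $\Borel$ retracts onto $\Un{1}$ and $H^2(B\Borel)\cong H^2\left(\CP{\infty}\right)\neq 0$. Second, and independently fatal: because $\omega$ is holomorphic, every form you produce is a constant-coefficient polynomial in components of $\omega$ and $\Omega$, hence of pure type $(p,0)$; on a compact K\"ahler manifold a closed $(p,0)$-form represents a class in $H^{p,0}$, which meets $H^{k,k}$ only in zero. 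But the genuine characteristic classes of the associated holomorphic bundles are $(k,k)$-classes, so your map cannot reproduce them --- the very type argument you invoke to discard curvature corrections annihilates the classes you are trying to construct. The model itself is the counterexample: for $M=\Proj{n}$, $E=\SL{n+1,\C{}}$, $\omega$ the Maurer--Cartan form, your construction outputs $\Gtot$-invariant holomorphic forms on $\Proj{n}$, all of which vanish in positive degree (no sum of distinct positive roots is zero), yet $\Proj{n}$ has nonvanishing Chern classes. So your $R_{\mathrm{model}}$ is the trivial ring, not the ring of characteristic forms of the model, and the asserted surjection carries no information about the Chern classes of $M$.

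The missing idea --- and the point where the paper departs completely from your plan --- is that one must \emph{not} work $\Gstruc$-equivariantly with the holomorphic Cartan connection; one must break holomorphy by reducing the structure group to the maximal reductive subgroup $\RedPart\subset\Gstruc$. The paper chooses a smooth, generically non-holomorphic, reduction $\red{E}\subset E$; on it the $\redpart$-component $\red{\omega}$ of $\omega$ is an honest principal connection whose Chern--Weil forms $P\left(\red{\nabla}\red{\omega}\right)$, for $P$ an invariant polynomial on $\redpart$, compute the true characteristic classes. Since $\red{E}$ still carries a holomorphic principal bundle structure, Singer's proposition makes these classes $(p,p)$ on a K\"ahler manifold, so only the $(p,p)$-component matters in cohomology. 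The comparison with the model then works by a finer type bookkeeping than yours: on $\red{E}$ the forms $\semib{\omega}$ and $\red{\omega}$ stay $(1,0)$ while only $\nil{\omega}$ acquires a $(0,1)$-part, and since $\nilpart$ is a subalgebra, $d\semib{\omega}$ and $d\red{\omega}$ are $(2,0)+(1,1)$ with the semibasic $(2,0)$ Cartan curvature confined to the $(2,0)$-part; hence the $(p,p)$-component of $P\left(\red{\nabla}\red{\omega}\right)$ is model-determined and vanishes whenever the model form $P(\Gtot)$ vanishes identically. In other words, the K\"ahler hypothesis is used to discard the non-$(p,p)$ components of honest $(k,k)$-classes, not --- as in your plan --- to discard $(p,0)$ corrections to classes that are themselves forced to be of type $(p,0)$ and therefore zero.
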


We compute the relations on characteristics rings of various
rational homogeneous varieties to give examples,
and explain how to employ these results to study
various types of complex analytic differential equations.

\section{Cartan geometries}
\subsection{Definition}
\begin{definition}
  A \emph{Cartan pseudogeometry} on a manifold $M$, modelled on a
  homogeneous space $\Gtot/\Gstruc$, is a principal right $\Gstruc$-bundle $E \to M$,
  (with right $\Gstruc$ action written $r_g : E \to E$ for $g \in \Gstruc$),
  with a 1-form $\omega \in \nForms{1}{E} \otimes \gtot$,
  called the \emph{Cartan pseudoconnection} (where $\gtot, \gstruc$ are
  the Lie algebras of $\Gtot,\Gstruc$), so that $\omega$ identifies each tangent
  space of $E$ with $\gtot$.  For each $A \in \gtot$,
  let $\vec{A}$ be the vector field on $E$ satisfying $\vec{A} \hook
  \omega = A$.  A Cartan pseudogeometry is called a \emph{Cartan
    geometry} (and its Cartan pseudoconnection called a \emph{Cartan
    connection}) if (1) the vectors fields
$\vec{A}$ generate the infinitesimal right action:
    \[
    \vec{A} = \left. \frac{d}{dt}r_{e^{tA}}\right|_{t=0}
    \]
    for all $A \in \gstruc$,
and (2) the Cartan connection transforms
in the adjoint representation: $r_g^* \omega = \Ad_g^{-1} \omega$
    for all $g \in \Gstruc$.
\end{definition}
If all of the manifolds, Lie groups, differential forms, and maps
above are complex analytic, we will say that $M$ bears a
\emph{complex} Cartan geometry. All Cartan geometries will
henceforth be assumed complex.

In most of our examples, $\Gtot$ will be
a reductive algebraic group and $\Gstruc$ will be a parabolic
subgroup.
\subsection{Examples}
\begin{example}[Affine connections]
Let $M$ be a complex manifold with an affine connection (i.e. a
connection on $TM$). Lets see why this is a Cartan geometry modelled
on affine space. Let $\Gtot$ be the group of affine transformations
of $\Aff{n}$, and $\Gstruc=\GL{n,\C{}}$ the linear transformations
(i.e. fixing a point). So $\Gtot/\Gstruc=\Aff{n}$. Let $n$ be the
dimension of $M$. Let $\pi : E \to M$ be the principal right
$\GL{n,\C{}}$-bundle associated to the tangent bundle. So a point
$e$ of $E$ is a basis $e=\left(e_1, e_2, \dots, e_n\right)$ for $T_m
M$ for some point $m$ of $M$. Let $\omega^i_0$ be the 1-forms on $E$
defined by
\[
v \hook \omega^i_0 e_i = \pi'(e) \cdot v.
\]
Let $\left(\omega^i_j\right)$ be the connection 1-form;
$\left(\omega^i_j\right) \in \nForms{1}{E} \otimes \gl{n,\C{}}$.

Let
\[
\omega =
\begin{pmatrix}
0 & 0 \\
\omega^i_0 & \omega^i_j
\end{pmatrix}
\in \nForms{1}{E} \otimes \gtot.
\]
In this way, affine connections (even with torsion) are Cartan geometries
modelled on affine space.
\end{example}

\begin{example}[Projective connections]
Write the standard basis of $\C{n+1}$ as $e_0, e_1, \dots, e_n$.
Let $\Gtot=\SL{n+1,\C{}}$, and $\Gstruc$ be the stabilizer of the
line $\C{} \, e_0$. Then $\Gtot/\Gstruc=\Proj{n}$.
A Cartan geometry modelled on $\Proj{n}$ is called a \emph{projective
connection} (see Cartan \cite{Cartan:1992}).
%
%
\end{example}

\begin{example}[Conformal connections]
Let $\Gtot=\SO{n+2,\C{}}$, and $\Gstruc$ be the stabilizer of a null
line in $\C{n+2}$. The quotient $Q=\Gtot/\Gstruc$ is the
hyperquadric (the set of null lines in $\C{n+2}$). A Cartan geometry
modelled on the hyperquadric is called a \emph{conformal
connection}.

It is well known (Cartan \cite{Cartan:68}) that a conformal
structure on manifold of at least three dimensions determines and is
determined by a unique conformal connection. Not every conformal
connection occurs this way; those which do are called \emph{normal}.
(See \v{C}ap \cite{Cap:2005} for the definition of normalcy.)
\end{example}

\begin{example}[$2^{\text{nd}}$ order systems
of ordinary differential equations] Tanaka \cite{Tanaka:1979} showed
that every second order system of ordinary differential equations
\[
\frac{d^2 y}{dx^2} = f\left(x,y,\frac{dy}{dx}\right),
\]
with $x \in \text{open} \subset \C{}$ and $y \in \text{open} \subset
\C{n}$, determines a unique Cartan geometry on the ``phase space''
of points $(x,y,p) \in \text{open} \subset \C{2n+1}$, (where $p$
formally represents $\frac{dy}{dx}$), modelled on
$\Proj{}T\Proj{n+1}$. In the model, we think of $(x,y)$ as
coordinates of an affine chart on $\Proj{n+1}$. In those
coordinates, $\Proj{}T\Proj{n+1}$ has points $(x,y,\xi)$ with $\xi
\in \Proj{}T_{(x,y)} \C{n+1}=\Proj{n}$. Take $p$ as an affine chart
on $\Proj{n}$, so that $\xi$ is the span of $(1,p) \in \C{n+1}$,
i.e. in homogeneous coordinates $\xi=\left[1:p\right]$. The
$2^{\text{nd}}$ order system of differential equations in the model
is $dy = p\, dx, dp=0$.
\end{example}

\begin{example}[2-plane fields on 5-folds]
It is well known (see Bryant \cite{Bryant:2000}, Cartan
\cite{Cartan:30}, Gardner \cite{Gardner:1989}, Nurowski
\cite{Nurowski:2005}, Sternberg \cite{Sternberg:1983}, Tanaka
\cite{Tanaka:1979}) that any 5-dimensional manifold equipped with a
``nondegenerate'' 2-plane field bears a canonical Cartan geometry
modelled on the hyperquadric $Q^5 = G_2/\Gstruc$ where $\Gstruc$ is
the stabilizer of a null line in the (unique up to scalar)
$G_2$-invariant quadratic form on the (unique up to isomorphism)
irreducible $G_2$-module $\C{7}$.
\end{example}

\begin{example}[$3^{\text{rd}}$ order ordinary differential equations]
Sato \& Yoshikawa \cite{Sato/Yoshikawa:1998} show that any
$3^{\text{rd}}$ order ordinary differential equation
\[
\frac{d^3 y}{dx^3} = f\left(x,y,\frac{dy}{dx},\frac{d^2y}{dx^2}\right)
\]
determines on its (4-dimensional) phase space
a Cartan geometry modelled on $\SO{5,\C{}}/B$,
with $B$ the Borel subgroup. The Cartan
geometry is invariant under local contact isomorphisms
of $3^{\text{rd}}$ order ODEs.
\end{example}

\subsection{Curvature}
Take a Cartan geometry $E \to M$ with Cartan connection $\omega$,
modelled on $\Gtot/\Gstruc$. The \emph{curvature} is
\begin{align*}
  \nabla \omega &= d \omega + \frac{1}{2}
  \left[\omega,\omega\right] \\
  &= \frac{1}{2} \kappa \semib{\omega} \wedge \semib{\omega},
\end{align*}
where $\semib{\omega} = \omega  \pmod{\gstruc} \in \nForms{1}{E}\otimes
\left( \gtot/\gstruc \right)$ is the \emph{soldering form} and $\kappa :
E \to \gtot \otimes
\Lm{2}{\gtot/\gstruc}^*$. Under right
$\Gstruc$-action $r_g : E \to E$,
\[
r_g^* \omega = \Ad_g^{-1} \omega,
r_g^* \nabla \omega = \Ad_g^{-1} \nabla \omega.
\]

\begin{example}[Affine connections]
For an affine connection,
\[
d \omega + \frac{1}{2} \left[\omega,\omega \right] =
\begin{pmatrix}
0 & 0 \\
\kappa^i_{k{\ell}} & \kappa^i_{jk{\ell}}
\end{pmatrix}
\omega^k \wedge \omega^{\ell}.
\]
The quantity $\kappa^i_{k \ell}$ is the torsion of the affine
connection, while $\kappa^i_{jk \ell}$ is the curvature, in the
coframe on $T_m M$ which pulls back to $\omega^i$ on $T_e E$.
\end{example}

\section{Why we would expect relations on characteristic classes}

Clearly the $\Gtot$-invariant vector bundles (or coherent sheaves)
on any homogeneous space $\Gtot/\Gstruc$ are precisely the vector
bundles $\Gtot \times_{\Gstruc} W$, where $W$ can be any
$\Gstruc$-module. (For many homogeneous spaces, including all
compact complex homogeneous spaces, all vector bundles are
$\Gtot$-invariant.) Suppose that $E \to M$ is a Cartan geometry
modelled on $\Gtot/\Gstruc$. To each such vector bundle, we can
associate a vector bundle $E \times_{\Gstruc} W \to M$ on every
Cartan geometry modelled on $\Gtot/\Gstruc$. This mapping
\[
\left(\text{vector bundles on }\Gtot/\Gstruc\right)^{\Gtot} \to
\text{vector bundles on } M
\]
takes sums to sums, subspaces to subspaces, quotients to quotients,
exact sequences to exact sequences, tensor products to tensor
products, etc. Moreover, it takes $\Gtot \times_{\Gstruc}
\left(\gtot/\gstruc\right)$ to $TM$, so strikes all tensor bundles
on $M$. Clearly we expect to see a map on the characteristic class
ring, or at least the part of the characteristic class ring arising
from $\Gtot$-invariant vector bundles on $\Gtot/\Gstruc$.

\section{Notation for various subgroups}
We will henceforth fix a maximal complex reductive subgroup
$\RedPart \subset \Gstruc$, and assume that we have fixed a direct
sum decomposition $\gstruc = \sspart \oplus \abpart \oplus
\nilpart$, of $\RedPart$-modules, with $\sspart \oplus \abpart
\subset \gstruc$ the Lie subalgebra of $\RedPart$, and $\sspart,
\abpart, \nilpart \subset \gstruc$ semisimple, abelian and nilpotent
complex subalgebras. Also assume that we have fixed a direct sum
decomposition $\gtot = \gstruc + \gtot/\gstruc$ of
$\RedPart$-modules. We won't need to assume that there is a closed
subgroup $\NilPart \subset \Gstruc$ with Lie algebra $\nilpart$, nor
that there are closed subgroups $\SSPart, \AbPart$ with Lie algebras
$\sspart, \abpart$.

We will write $\cpt{H}$ for a maximal compact subgroup of any group
$H$, and $\cpt{\mathfrak{h}} \subset \mathfrak{h}$ for the Lie
algebra of $\cpt{H} \subset H$. We will assume that we have picked
maximal compact subgroups of $\Gstruc$ and $\RedPart$ so that
$\cpt{\RedPart} = \RedPart \cap \cpt{\Gstruc}$.

\subsection{Reductions of structure group}
If $E \to M$ is any Cartan geometry, then there is a smooth (not
necessarily holomorphic) reduction of structure group to a principal
right $\cpt{\Gstruc}$-bundle which we will call $\cpt{E} \to M$ (see
Steenrod \cite{Steenrod:1999} 12.14). Moreover, $E = \cpt{E}
\times_{\cpt{\Gstruc}} \Gstruc$. We will also consider the principal
right $\RedPart$-bundle $\red{E} = \cpt{E} \times_{\cpt{\Gstruc}}
\RedPart$. Obviously there are many choices of reduction of
structure group $\cpt{E} \subset E,$ but any choice will lead us to
the same characteristic classes.

\begin{example}[Projective connections]
For a projective connection, we can write the elements of $\gtot$ as
matrices, say
\[
\omega =
\begin{pmatrix}
\omega^0_0 & \omega^0_j \\
\omega^i_0 & \omega^i_j
\end{pmatrix}
\]
with $i, j=1, \dots, n$, and $\omega^0_0 + \omega^i_i=0$.
Elements of the subalgebra
$\gstruc \subset \gtot$ look like
\[
\begin{pmatrix}
\omega^0_0 & \omega^0_j \\
0 & \omega^i_j
\end{pmatrix}.
\]
It is helpful to split up the Lie algebra into a sum:
\[
\gtot = \gtot/\gstruc \oplus \redpart \oplus \nilpart,
\]
(\emph{not} a sum of subalgebras, but only of vector subspaces)
where the various parts are given by splitting matrices as
\[
\omega =
\begin{pmatrix}
0 & 0 \\
\omega^i_0 & 0
\end{pmatrix}
+
\begin{pmatrix}
0 & 0 \\
0 & \omega^i_j - \frac{\delta^i_j}{n} \omega^k_k
\end{pmatrix}
+
\begin{pmatrix}
\omega^0_0 & 0 \\
0 & \frac{\delta^i_j}{n} \omega^k_k
\end{pmatrix}
+
\begin{pmatrix}
0 & \omega^0_j \\
0 & 0
\end{pmatrix}.
\]
This corresponds on the group $\Gtot$ to
splitting up any matrix $g \in \Gtot$, say
\[
g =
\begin{pmatrix}
g^0_0 & g^0_j \\
g^i_0 & g^i_j
\end{pmatrix}
\]
as
\[
g =
\begin{pmatrix}
1 & g^0_m h^m_j \\
0 & \delta^i_j
\end{pmatrix}
\begin{pmatrix}
g^0_0 - g^0_{m} h^m_p g^p_0 & 0 \\
0 & g^j_k
\end{pmatrix}
\begin{pmatrix}
1 & 0 \\
h^k_q g^q_0 & \delta^k_{\ell}
\end{pmatrix}
\]
(essentially the Harish-Chandra decomposition),
where here the components are split
up in the reverse order, and the
$\redpart$ parts are combined.

We can write the Cartan connection 1-form
$\omega$ on any projective connection $E \to M$
in the same manner.
It is helpful to split up the Maurer--Cartan
1-form into pieces as
\begin{align*}
\semib{\omega} &= \left(\omega^i_0\right)
\in \nForms{1}{\Gtot} \otimes {(\gtot/\gstruc)} \\
\red{\omega} &= \left( \omega^i_j + \delta^i_j \omega^k_k \right)
\in \nForms{1}{\Gtot} \otimes \left(\redpart\right) \\
\nil{\omega} &= \left(\omega^0_j\right)
\in \nForms{1}{\Gtot} \otimes \nilpart.
\end{align*}
The 1-forms $\semib{\omega}$ are semibasic,
i.e. they vanish on the fibers of $E \to M$.

We write $\omega^i$ for $\semib{\omega}=\omega^i_0$,
$\omega_i$ for $\nil{\omega}=\omega^0_i$, and
write $\red{\omega}$ as $\gamma^i_j$.
The curvature splits up then as in table~\vref{tbl:Struc}.
\begin{table}
\begin{align*}
  \nabla \omega^i &= d \omega^i + \gamma^i_j \wedge \omega^j \\
  &= \frac{1}{2} K^i_{kl} \omega^k \wedge \omega^l \\
  \nabla \gamma^i_j &= d \gamma^i_j + \gamma^i_k \wedge \gamma^k_j -
  \left( \omega_j \delta^i_k + \omega_k \delta^i_j
  \right) \wedge \omega^k  \\
  &= \frac{1}{2} K^i_{jkl} \omega^k \wedge \omega^l \\
  \nabla \omega_i &= d \omega_i - \gamma^j_i \wedge \omega_j \\
  &= \frac{1}{2} K_{ikl} \omega^k \wedge \omega^l \\
  0 &= K^i_{jk} + K^i_{kj} \\
  0 &= K^i_{jkl} + K^i_{jlk} \\
  0 &= K_{ikl} + K_{ilk}. \\
\end{align*}
\caption{The structure equations of a projective connection}\label{tbl:Struc}
\end{table}
Looking at the equations for $\gamma^i_j$, i.e. for $\red{\omega}$,
we can define
\[
\red{\nabla} \red{\omega} =
\left( d \gamma^i_j + \gamma^i_k \wedge \gamma^k_j \right),
\]
and find immediately that
\[
\red{\nabla} \red{\omega} =
\left(
- \left( \omega_j \delta^i_k + \omega_k \delta^i_j
  \right) \wedge \omega^k
+ \frac{1}{2} K^i_{jkl} \omega^k \wedge \omega^l.
\right)
\]
It looks at first sight as if $\red{\omega}$
is a connection 1-form. But its ``curvature''
$\red{\nabla} \red{\omega}$ is \emph{not} semibasic,
since it involves $\omega_i$ terms. So $\red{\nabla}$ is
only vaguely reminiscent of a connection. Nevertheless,
we will see that
\[
- \left( \omega_j \delta^i_k + \omega_k \delta^i_j
  \right) \wedge \omega^k
\]
behaves very much like a curvature term,
and controls characteristic classes.

The subgroup $\cpt{\Gtot} = \SU{n+1}$
acts on $\Proj{n}$ transitively. On
$\SU{n+1}$,  $\omega$ lives in $\su{n+1}$,
so that $\red{\omega} \in \un{n}$, and
$\nil{\omega} = -\left(\semib{\omega}\right)^*$,
i.e. $\omega_i = -\omega^{\bari}$.
Therefore when we restrict from $\Gtot$
to $\cpt{\Gtot}$, $\red{\omega}$ becomes
a connection, and its curvature is
precisely
\begin{align*}
\red{\nabla} \red{\omega}
&= - \left( \omega_j \delta^i_k + \omega_k \delta^i_j
  \right) \wedge \omega^k  \\
&=
\left( \omega^{\barj} \delta^i_k + \omega^{\bar{k}} \delta^i_{j}
\right) \wedge \omega^k,
\end{align*}
the curvature of the Fubini--Study metric.

If we pick any reduction $\cpt{E} \subset E$ on any projective connection,
we find
\[
\red{\nabla} \red{\omega}
= - \left( \omega_j \delta^i_k + \omega_k \delta^i_j
  \right) \wedge \omega^k
+
\frac{1}{2} \kappa^i_{jk{\ell}} \omega^k \wedge \omega^{\ell}.
\]
The last term,
\[
\frac{1}{2} \kappa^i_{jk{\ell}} \omega^k \wedge \omega^{\ell},
\]
is a semibasic $\left(2,0\right)$-form. Therefore the $(1,1)$-terms
of the curvature appear only in the terms with no
$\kappa^i_{jk{\ell}}$ factor. The curvature of the connection
$\red{\omega}$ on the reduction has $(1,1)$-part looking very much
like the Fubini--Study metric, independent of the curvature of the
Cartan connection.

Keep in mind that in a general Cartan geometry, we don't have any
control on the actual $\omega_i$ terms in the reduction. So the
$(1,1)$-terms that arise might not be given by
$\omega_i=-\omega^{\bar{i}},$ but instead by some more complicated
expression. However, the main point is that the $(1,1)$-terms are
not (directly) influenced by the curvature $\kappa$ of the Cartan
geometry at each point.
\end{example}

\begin{example}[$2^{\text{nd}}$ order systems of ordinary differential equations]
We immediately see the similarity to projective connections: write
the Cartan connection 1-form
\[
\omega =
\begin{pmatrix}
\omega^0_0 & \omega^0_1 & \omega^0_j \\
\omega^1_0 & \omega^1_1 & \omega^1_j \\
\omega^i_0 & \omega^i_1 & \omega^i_j
\end{pmatrix}
\]
for $i,j=2,\dots,n+1$. Then the stabilizer
subgroup $\Gstruc \subset \Gtot=\PSL{n+1,\C{}}$
of a point of $\Proj{}T\Proj{n+1}$
has Lie algebra the image of
\[
\begin{pmatrix}
\omega^0_0 & \omega^0_1 & \omega^0_j \\
0 & \omega^1_1 & \omega^1_j \\
0 & 0 & \omega^i_j
\end{pmatrix}.
\]
So we write
\begin{align*}
\semib{\omega} &=
\left( \omega^1_0, \omega^i_0, \omega^i_1 \right) \\
\red{\omega} &=
\left( \omega^0_0, \omega^1_1, \omega^i_j \right ) \\
\nil{\omega} &=
\left( \omega^0_1, \omega^0_j, \omega^1_j \right).
\end{align*}
The structure equations of the model
give
\begin{align*}
d \omega^0_0 &=  -\omega^0_1 \wedge \omega^1_0 - \omega^0_k \wedge \omega^k_0 \\
d \omega^1_1 &=  -\omega^1_0 \wedge \omega^0_1 - \omega^1_k \wedge \omega^k_1 \\
d \omega^i_j &=  -\omega^i_0 \wedge \omega^0_j - \omega^i_1 \wedge \omega^1_j - \omega^i_k \wedge \omega^k_j. \\
\end{align*}
On the maximal compact subgroup $\cpt{\Gtot}=\SU{n+1} \subset
\Gtot=\SL{n+1,\C{}}$, we find
\[
\nil{\omega} = - \left(\semib{\omega}\right)^*,
\]
so once again we find that
\begin{align*}
\red{\nabla} \red{\omega}
&=
\begin{pmatrix}
d \omega^0_0 \\
d \omega^1_1 \\
d \omega^i_j
+ \omega^i_k \wedge \omega^k_j \\
\end{pmatrix}
\\
&=
\begin{pmatrix}
\omega^{\bar{1}}_{\bar{0}} \wedge \omega^1_0 +
\omega^{\bar{k}}_{\bar{0}} \wedge \omega^k_0 \\
\omega^{\bar{0}}_{\bar{1}} \wedge \omega^0_1
+ \omega^{\bar{k}}_{\bar{1}} \wedge \omega^k_1 \\
\omega^{\bar{0}}_{\bar{i}} \wedge \omega^0_j
+ \omega^{\bar{1}}_{\bar{i}} \wedge \omega^1_j
\end{pmatrix}
\end{align*}
is exactly the curvature of the standard
$\SU{n+1}$-invariant K\"ahler metric on
$\Proj{}T\Proj{n+1}$.

Any Cartan geometry with the same model
will have the same structure equations,
except for Cartan connection curvature terms.
These terms are $(2,0)$-terms expressed
in terms of the semibasic $(1,0)$-forms,
i.e. components of $\semib{\omega}$, so
linear combinations of
\[
\omega^1_0 \wedge \omega^i_0, \omega^1_0 \wedge \omega^i_1,
\omega^i_0 \wedge \omega^j_0, \omega^i_0 \wedge \omega^j_1,
\omega^i_1 \wedge \omega^j_1.
\]
So on any reduction $\red{E} \subset E$,
we will find
\[
\red{\nabla} \red{\omega}
+
\begin{pmatrix}
\omega^0_1 \wedge \omega^1_0 - \omega^0_k \wedge \omega^k_0 \\
\omega^1_0 \wedge \omega^0_1 - \omega^1_k \wedge \omega^k_1 \\
\omega^i_0 \wedge \omega^0_j - \omega^i_1 \wedge \omega^1_j \\
\end{pmatrix}
\]
is $(2,0),$ a complex linear multiple of the
semibasic $(2,0)$-forms.
Therefore the Cartan connection curvature
terms never influence the $(1,1)$ part
of the curvature of the reduction.
\end{example}

\begin{example}[2-plane fields on 5-folds]
On $\C{7}$, consider the 3-form
\[
\phi = dx^{567} + dx^{125} - dx^{345}
+dx^{136} + dx^{246} + dx^{147} - dx^{237}
\]
where $dx^1,\dots,dx^7$ is a basis of $\C{7*}$ and $dx^{ij} = dx^i
\wedge dx^j$ etc. We will always use this basis of $\C{7*},$ and in
so doing we are following McLean \cite{McLean:1998}. Following
Bryant \cite{Bryant:1987} we know that the group of linear
transformations of $\C{7}$ fixing $\phi$ is $G_2$. Moreover, the
subgroup preserving the real locus $\R{7} \subset \C{7}$ is
precisely the compact form of $G_2$, which we will call $\cpt{G_2}$.
For the moment, lets work with the compact form, but of course all
of the equations are identical when complexified.

The group $\cpt{G_2}$ preserves
the usual Euclidean quadratic form
\[
\sum \left(dx^i\right)^2.
\]
Write $\Ha{}$ for the quaternions,
$L_q$ for left multiplication by $q$
and $R_q$ for right multiplication by $q$.
For exterior forms $\alpha$ and $\beta$
valued in the quaternions
(or in any associative algebra),
of degrees $a$ and $b$ respectively:
\begin{align*}
L_{\alpha} \wedge \beta &= \alpha \wedge \beta \\
R_{\alpha} \wedge \beta &= \left(-1\right)^{ab} \beta \wedge \alpha \\
L_{\alpha} \wedge L_{\beta} &= L_{\alpha \wedge \beta} \\
L_{\alpha} \wedge R_{\beta} &= \left(-1\right)^{ab}
R_{\beta} \wedge L_{\alpha} \\
R_{\alpha} \wedge R_{\beta} &= \left(-1\right)^{ab} R_{\beta \wedge \alpha}.
\end{align*}
As McLean \cite{McLean:1998} explains, the Lie algebra of
$\cpt{G_2}$ can be written as
\[
\omega = \begin{pmatrix}
L_{\lambda} - R_{\rho} & \beta \\
-\trans{\beta} & L_{\rho} - R_{\rho}
\end{pmatrix}
\]
in its $\R{7}$ representation. The meaning of $\trans{\beta}$ is
that we take $\beta$, valued in $\Im \Ha{*} \otimes \Ha{}$ and use
the invariant metric $q \mapsto q\bar{q}$ on $\Ha{}$ to identify
$\Im \Ha{*} \otimes \Ha{}$ with $\Im \Ha{} \otimes \Ha{*}$.
Moreover, McLean points out that $\beta$ satisfies
\[
\beta_1 i +  \beta_2 j +  \beta_3 k = 0.\footnote{
McLean actually says that $i \beta_1 + j \beta_2 + k \beta_3=0,$
which turns out not to be true. It does not affect his results.}
\]
To put it more concretely (and make explicit what $\trans{\beta}$
means), the Maurer--Cartan 1-form is
\[
\omega =
\begin {pmatrix}
0 &
-\lambda^1+\rho^1 &
-\lambda^2+\rho^2 &
-\lambda^3+\rho^3 &
\beta^0_1&\beta^0_2 &
\beta^0_3
\\
\lambda^1-\rho^1 &
0 &
-\lambda^3-\rho^3 &
\lambda^2+\rho^2 &
\beta^1_1&\beta^1_2 &
\beta^1_3
\\
\lambda^2-\rho^2 &
\lambda^3+\rho^3 &
0 &
-\lambda^1-\rho^1 &
\beta^2_1 &
\beta^2_2 &
\beta^2_3 \\
\lambda^3-\rho^3 &
-\lambda^2-\rho^2 &
\lambda^1+\rho^1 &
0 &
\beta^3_1&\beta^3_2 &
\beta^3_3
\\
-\beta^0_1 &
-\beta^1_1 &
-\beta^2_1 &
-\beta^3_1 &
0 &
-2\,\rho^3 &
2\,\rho^2
\\
-\beta^0_2 &
-\beta^1_2 &
-\beta^2_2 &
-\beta^3_2 &
2\,\rho^3 &
0 &
-2\,\rho^1
\\
-\beta^0_3 &
-\beta^1_3 &
-\beta^2_3 &
-\beta^3_3 &
-2\,\rho^2 &
2 \,\rho^1 &
0
\end{pmatrix}
\]
with
\[
\begin{pmatrix}
\beta^1_1+\beta^2_2+\beta^3_3
\\
\beta^0_1+\beta^3_2-\beta^2_3
\\
-\beta^3_1+\beta^0_2+\beta^1_3
\\
\beta^2_1-\beta^1_2+\beta^0_3
\end{pmatrix} = 0.
\]

The Maurer--Cartan structure equation $d \omega =
-\frac{1}{2}\left[\omega,\omega\right]$ expands out to
\begin{align*}
d \lambda + \lambda \wedge \lambda &=
\left ( \beta \wedge \trans{\beta} \right )_+ \\
d \rho + \rho \wedge \rho &=
-
\left ( \beta \wedge \trans{\beta} \right )_- \\
d \beta &= - \left ( L_{\lambda} - R_{\rho} \right )
\wedge \beta
- \beta \wedge \left ( L_{\rho} - R_{\rho} \right )
\end{align*}
where, since $\beta \wedge \trans{\beta}$ is a 2-form valued in
$\so{4}=\su{2}_+ \oplus \su{2}_-$, we can split it into
\[
\beta \wedge \trans{\beta}
= \left(\beta \wedge \trans{\beta} \right)_{+}
+
\left(\beta \wedge \trans{\beta} \right )_{-}.
\]

The line spanned by $e_6 + \sqrt{-1} e_7$ in $\C{7}$ is a null line
for the quadratic form. The subgroup of $\cpt{G_2}$ preserving the
2-plane $e_6 \wedge e_7$ is $\cpt{\Gstruc}=\Symp{1} \times \Un{1}/
\pm 1$, and its Lie algebra is cut out by setting $\beta=0$ and
$\rho^2=\rho^3=0$:
\[
\begin{pmatrix}
L_{\lambda} - R_{i \rho^1} & 0 \\
0 & L_{i \rho^1} - R_{i \rho^1}
\end{pmatrix}.
\]
Therefore on $\cpt{G_2}$,
\begin{align}
\red{\nabla} \red{\omega}
&=
\red{\nabla}
\begin{pmatrix}
\lambda \\
\rho^1
\end{pmatrix}
\\
&=
\begin{pmatrix}
\left ( \beta \wedge \trans{\beta} \right )_+ \\
-2 \rho^2 \wedge \rho^3
-\frac{1}{2}
\left(
\beta^a_2 \wedge \beta^a_3
\right)\label{eqn:BlahBlah}
\end{pmatrix}.
\end{align}
Again to be more specific,
\begin{align*}
\left(\beta \wedge \trans{\beta} \right)_+
&=
\frac{i}{2}
\left(
- \beta^0_a \wedge \beta^1_a
+ \beta^3_a \wedge \beta^2_a
\right)
\\
&+
\frac{j}{2}
\left(
- \beta^0_a \wedge \beta^2_a
+ \beta^1_a \wedge \beta^3_a
\right)\\
&+
\frac{k}{2}
\left(
- \beta^0_a \wedge \beta^3_a
+ \beta^2_a \wedge \beta^1_a
\right).
\end{align*}

All of these equations hold in the complex form of $G_2$ as well.
The complex subgroup $\Gstruc$ fixing the null line through $e_6 +
\sqrt{-1} e_7$ has Lie algebra given by
\begin{align*}
\beta_2 + \beta_3 \sqrt{-1} &= 0, \\
\rho^2 + \rho^3 \sqrt{-1} &= 0.
\end{align*}

Any nondegenerate holomorphic 2-plane field on a complex 5-manifold
gives rise to a Cartan geometry modelled on $G_2/\Gstruc$, which has
structure equations as above but with curvature ``correction
terms'':
\[
d \omega + \omega \wedge \omega =
\kappa
\semib{\omega} \wedge \semib{\omega}
\]
and
\[
\semib{\omega} =
\begin{pmatrix}
\beta_2 +  \beta_3 \sqrt{-1}\\
\rho^2 + \rho^3 \sqrt{-1}
\end{pmatrix}.
\]
Again, the $(1,1)$-terms in $\red{\nabla} \red{\omega}$
are identical to the expressions of
equation~\vref{eqn:BlahBlah} (the model), while the
Cartan connection affects only the $(2,0)$-terms:
\[
\red{\nabla} \red{\omega}
=
\begin{pmatrix}
\left( \beta \wedge \trans{\beta} \right)_+ \\
-2 \rho^2 \wedge \rho^3 - \frac{1}{2} \left( \beta^a_2 \wedge \beta^a_3 \right)
\end{pmatrix}
+ \left(2,0\right) \ \text{curvature terms}.
\]
\end{example}

\section{Characteristic forms}
Take $E \to M$ any Cartan geometry,
and pick a reduction $\red{E} \subset E.$
On $\red{E}$, the Cartan connection $\omega$ splits
into
\[
\omega =
\semib{\omega} \oplus
\red{\omega} \oplus
\nil{\omega} \in \nForms{1}{\red{E}} \otimes
\left(
\gtot/\gstruc \oplus
\redpart \oplus
\nilpart
\right).
\]
These 1-forms vary under right $\RedPart$-action as the obvious
$\RedPart$-modules.
\begin{lemma}
On any reduction of structure group $\red{E} \subset E$,
$\nil{\omega}=t \semib{\omega}$, for some
$t : \red{E} \to \left(\gtot/\gstruc\right)^* \otimes_{\R{}} \nilpart$.
\end{lemma}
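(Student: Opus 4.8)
The plan is to show that on $\red{E}$ both $\semib{\omega}$ and $\nil{\omega}$ are semibasic for the projection $\red{E} \to M$, and that $\semib{\omega}$ induces a real-linear isomorphism on the horizontal quotient; the desired $t$ then appears as the unique pointwise linear map expressing $\nil{\omega}$ in terms of $\semib{\omega}$.

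First I would pin down the vertical tangent space of $\red{E} \to M$. Since $\red{E}$ is a principal right $\RedPart$-bundle, its fibres are the $\RedPart$-orbits, so the vertical vectors at a point $e$ are exactly the fundamental vector fields $\vec{A}$ with $A \in \redpart$. The Cartan-geometry axiom $\vec{A} \hook \omega = A$ applies here because $\redpart \subset \gstruc$. Decomposing such an $A$ along $\gtot = \gtot/\gstruc \oplus \redpart \oplus \nilpart$ gives $\vec{A} \hook \semib{\omega} = 0$ and $\vec{A} \hook \nil{\omega} = 0$, while $\vec{A} \hook \red{\omega} = A$. Hence both $\semib{\omega}$ and $\nil{\omega}$ annihilate the vertical space and so factor through the quotient $T_e \red{E}/(\text{vertical}) \cong T_m M$.

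Next I would check that the factored map induced by $\semib{\omega}$ is an $\R{}$-linear isomorphism $T_m M \to \gtot/\gstruc$. This is just the soldering isomorphism: on the full holomorphic bundle $E$ the form $\omega$ identifies each tangent space with $\gtot$, so $\semib{\omega}$ descends to a complex-linear iso $\sigma_e : T_m M \cong \gtot/\gstruc$; restricting to the inclusion $\red{E} \hookrightarrow E$ leaves this descended map unchanged, since the inclusion commutes with the projection to $M$ and the bundle projection $\red{E} \to M$ remains a submersion onto the same base. As real vector spaces both sides have dimension $2 \dim_{\C{}}(\gtot/\gstruc) = \dim_{\R{}} M$, and the kernel computation above shows injectivity, hence bijectivity.

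Finally, because $\nil{\omega}$ factors through the same quotient $T_m M$ on which $\semib{\omega}$ is an isomorphism, I can set $t_e := \bar{\nu}_e \circ \sigma_e^{-1}$, where $\bar{\nu}_e$ is the factored map of $\nil{\omega}$; then $\nil{\omega}_e = t_e\,\semib{\omega}_e$ pointwise, with $t_e$ varying smoothly in $e$. The one point demanding care---and the reason the target is the real tensor product $(\gtot/\gstruc)^* \otimes_{\R{}} \nilpart$ rather than the complex one---is that the reduction $\red{E} \subset E$ is only smooth, not holomorphic, so $T_e \red{E}$ is a real (not complex) subspace of $T_e E$ and $\sigma_e$, though complex-linear, is inverted as an $\R{}$-linear map, making $t_e$ merely $\R{}$-linear. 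The main obstacle is thus purely bookkeeping the real versus complex structures; once one grants that $\red{E} \to M$ is a genuine principal bundle over the same base $M$, so that the vertical space is cut out exactly by $\redpart$, the remainder is the linear-algebra factorization just described.
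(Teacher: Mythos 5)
Your proposal is correct and takes essentially the same route as the paper's proof: you use the fundamental vector field axiom $\vec{A} \hook \omega = A$ for $A \in \redpart$ to show $\nil{\omega}$ annihilates the vertical vectors of $\red{E} \to M$, and then invoke the soldering isomorphism $T_m M \cong \gtot/\gstruc$ induced by $\semib{\omega}$ to conclude that every semibasic form is a multiple of $\semib{\omega}$. Your additional bookkeeping on why $t$ is only $\R{}$-linear (the reduction being merely smooth) is a useful clarification but does not alter the argument.
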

\begin{remark}
This $t$ is not holomorphic for the generic choice of reduction
$\red{E}$.
\end{remark}
\begin{proof}
On $\red{E} \subset E,$ we have $\vec{A} \hook \omega = A,$
for $A \in \redpart.$ So splitting
\[
\omega = \semib{\omega} \oplus \red{\omega} \oplus \nil{\omega},
\]
we have $\vec{A} \hook \omega = \vec{A} \hook \red{\omega},$ for $A
\in \redpart.$ Therefore $\vec{A} \hook \nil{\omega}=0,$ so
$\nil{\omega}$ is semibasic. Let $\pi : E \to M$ be the bundle map.
Clearly $\semib{\omega} = \omega + \gstruc$ yields isomorphisms
\[
\xymatrix{
\left(\pi^* TM\right)_e \ar[r] &
T_eE/\ker \pi'(e) \ar[r] &
\gtot/\gstruc.
}
\]
So every semibasic 1-form is a multiple of $\semib{\omega}.$
\end{proof}
\begin{remark}
The 1-form $\red{\omega}$ is a connection for the bundle $\red{E}
\to M$ (\emph{not} a Cartan connection, unless $M$ has dimension
$0$).
\end{remark}
We can calculate the curvature of this connection as
\begin{align*}
\red{\nabla} \red{\omega} &=
d \red{\omega} + \frac{1}{2} \left[\red{\omega},\red{\omega}\right] \\
&=
\red{K} \semib{\omega} \wedge \semib{\omega}.
\end{align*}
The characteristic forms of $\red{E}$ are the differential forms
\[
P\left(\red{\nabla} \red{\omega}\right) \in
\nForms{\text{even}}{M},
\]
computed from complex polynomials $P$ on the Lie algebra which are
invariant under $\RedPart$. We will write this form as
$P\left(\red{E}\right)$.

We may also consider the same Lie algebra splitting applied to
$\omega$ on $E$. This splitting will not be invariant under the
structure group $\Gstruc$, but only under $\RedPart$. Nonetheless,
we can write down differential forms
\[
P\left(E\right)=
P\left(d \red{\omega} + \frac{1}{2}\left[
\red{\omega},\red{\omega}\right]\right)
\in \nForms{\text{even}}{E}.
\]
They may refuse to descend to $M$, being only invariant under
$\RedPart$, but in general (as we will see in examples) not under
$\nilpart$.

Obviously, the inclusion $\red{E} \subset E$ pulls back $P(E)$ to
$P\left(\red{E}\right)$. It is in this sense that we will think of
$P(E)$ as a characteristic form of $E$, even though it is not
defined downstairs on $M$.

\begin{lemma}
$P(E)$ is closed.
\end{lemma}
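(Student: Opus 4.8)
The plan is to run the standard Chern--Weil closedness argument, taking care that it never actually uses the fact that $\red{\omega}$ is a genuine connection. Write $\Omega := \red{\nabla}\red{\omega} = d\red{\omega} + \frac{1}{2}\left[\red{\omega},\red{\omega}\right]$. Since $\redpart = \sspart \oplus \abpart$ is a Lie subalgebra of $\gtot$ and $\red{\omega}$ is $\redpart$-valued, the bracket $\left[\red{\omega},\red{\omega}\right]$ again lands in $\redpart$, so $\Omega \in \nForms{2}{E}\otimes\redpart$ and every bracket below stays inside $\redpart$. The proof rests on two ingredients: a formal Bianchi identity for $\Omega$, and infinitesimal invariance of the polynomial $P$.

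First I would establish the Bianchi identity $d\Omega = \left[\Omega,\red{\omega}\right]$. This is purely algebraic: differentiating the defining formula and using $d^2=0$ gives $d\Omega = \left[d\red{\omega},\red{\omega}\right]$, and substituting $d\red{\omega} = \Omega - \frac{1}{2}\left[\red{\omega},\red{\omega}\right]$ yields $d\Omega = \left[\Omega,\red{\omega}\right] - \frac{1}{2}\left[\left[\red{\omega},\red{\omega}\right],\red{\omega}\right]$, the last term vanishing by the graded Jacobi identity for the $1$-form $\red{\omega}$. The crucial point is that this identity holds verbatim for \emph{any} $\redpart$-valued $1$-form whose ``curvature'' is defined by that formula; it does not require $\red{\omega}$ to reproduce the infinitesimal $\Gstruc$-action nor to be $\Gstruc$-equivariant. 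This is exactly why the failure of $\red{\omega}$ to be a Cartan connection, and the non-semibasic nature of $\Omega$ observed in the examples, does no harm.

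Next, let $P$ be homogeneous of degree $k$ with symmetric polarization $\tilde{P}$ on $\redpart$. Because $P$ is invariant under $\RedPart$, differentiating the invariance relation gives the infinitesimal identity $\sum_i \tilde{P}(A_1,\dots,[\xi,A_i],\dots,A_k)=0$ for all $\xi,A_1,\dots,A_k\in\redpart$. I would then compute $dP(\Omega) = k\,\tilde{P}(d\Omega,\Omega,\dots,\Omega) = k\,\tilde{P}(\left[\Omega,\red{\omega}\right],\Omega,\dots,\Omega)$ using the Bianchi identity, the even degree of $\Omega$ ensuring no Koszul signs appear in the derivation step. Since $\Omega$ and $\red{\omega}$ are both $\redpart$-valued, and since the $\Omega$-entries are even-degree forms and hence commute inside $\tilde{P}$, the standard extension of infinitesimal invariance to Lie-algebra-valued forms (applied with $\xi=\red{\omega}$ and each $A_i=\Omega$) forces $\tilde{P}(\left[\red{\omega},\Omega\right],\Omega,\dots,\Omega)=0$; the sign relating $\left[\Omega,\red{\omega}\right]$ to $\left[\red{\omega},\Omega\right]$ is immaterial. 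Hence $dP(\Omega)=0$, so $P(E)$ is closed.

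The main obstacle is conceptual rather than computational: one must recognize that closedness of a Chern--Weil form is a formal consequence of the Bianchi identity together with invariance of $P$, both of which survive when $\red{\omega}$ is merely the $\redpart$-component of $\omega$ rather than an honest connection. Once this is seen, the argument is the textbook one, and the only bookkeeping is to track the Koszul signs for Lie-algebra-valued forms and to confirm that $\redpart$ being a subalgebra keeps every bracket inside $\redpart$.
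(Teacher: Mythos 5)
Your proposal is correct and follows essentially the same route as the paper's proof: both establish the formal Bianchi identity $d\left(\red{\nabla}\red{\omega}\right)=\left[\red{\nabla}\red{\omega},\red{\omega}\right]$ via the graded Jacobi identity and then conclude by the polarized (infinitesimal) $\RedPart$-invariance of $P$, with the key observation in each case being that this computation is purely formal and never requires $\red{\omega}$ to be an actual connection on $E$. The only difference is organizational: you isolate the Bianchi identity as a standalone step, while the paper performs the same substitution inline within the sum over arguments of the polarization.
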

\begin{proof}
The proof is as for the usual theory of
characteristic forms. Because $P$ is $\RedPart$-invariant,
\[
P(A)=P\left(\Ad_g A\right),
\]
for any $g \in \RedPart$.
When we polarize $P$, this gives
\[
0 = P\left(A,A, \dots, \left[B,A\right], A, \dots, A \right)
\]
for any $A, B \in \redpart$. By the Jacobi
identity for Lie algebras,
\[
\left[\red{\omega},\left[\red{\omega},\red{\omega}\right]\right]=0.
\]
We will use the expression
$\red{\nabla} \red{\omega}$ to mean
\[
\red{\nabla} \red{\omega}
=
d \red{\omega} +
        \frac{1}{2}
        \left[
                \red{\omega},\red{\omega}
        \right]
\]
even on the bundle $E$, where this expression cannot be interpreted
as a covariant derivative in any sense. Calculate
\begin{align*}
dP(E)
&=
d
\left( P
\left(
        \red{\nabla} \red{\omega}
\right)
\right)
\\
&=
d
\left(
P
\left(
        \red{\nabla} \red{\omega}, \dots,
        \red{\nabla} \red{\omega}
\right)
\right)
\\
&=
\sum
P
\left(
        \red{\nabla} \red{\omega}, \dots,
        d \red{\nabla} \red{\omega}
        \dots,
        \red{\nabla} \red{\omega}
\right)
\\
&=
\sum
P
\left(
        \red{\nabla} \red{\omega}, \dots,
        d
        \left( d \red{\omega} + \frac{1}{2} \left[\red{\omega},\red{\omega}\right]
        \right ),
        \dots,
        \red{\nabla} \red{\omega}
\right)
\\
&=
\sum
P
\left(
        \red{\nabla} \red{\omega}, \dots,
        \left[d \red{\omega},\red{\omega}\right],
        \dots,
        \red{\nabla} \red{\omega}
\right)
\\
&=
\sum
P
\left(
        \red{\nabla} \red{\omega}, \dots,
        \left[
                \red{\nabla} \red{\omega},
                \red{\omega}
        \right],
        \dots,
        \red{\nabla} \red{\omega}
\right)
\end{align*}
which vanishes by the invariance of $P$.
\end{proof}

\begin{lemma}
The cohomology class of $P\left(\red{E}\right)$ is independent of
the choice of reduction $\red{E} \subset E$.
\end{lemma}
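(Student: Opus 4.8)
The plan is to run a Chern--Weil transgression argument, using crucially that (by the previous lemma) the form $P\left(\red{E}\right)$ is built from the curvature of a genuine connection and is closed. The whole statement will follow once any two reductions are joined by a smooth path of reductions: given such a path I can assemble it into a single reduction over $M \times [0,1]$, obtain a closed characteristic form there restricting to the two given forms on the end slices, and then apply the de Rham homotopy operator to conclude that the difference is exact on $M$.

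So the first step is to see that the space of reductions is path-connected. A reduction $\cpt{E} \subset E$ to the maximal compact $\cpt{\Gstruc}$ is the same datum as a section of the associated bundle $E/\cpt{\Gstruc} \to M$, whose fiber is $\Gstruc/\cpt{\Gstruc}$. Since $\Gstruc$ is a connected Lie group, the Cartan--Iwasawa--Malcev theorem gives a diffeomorphism $\Gstruc \cong \cpt{\Gstruc} \times \R{d}$, so the fiber $\Gstruc/\cpt{\Gstruc} \cong \R{d}$ is contractible. A fiber bundle with contractible fibers has a path-connected (indeed contractible) space of sections, so any two reductions $\cpt{E}_0, \cpt{E}_1$ are joined by a smooth family $\cpt{E}_s$, $s \in [0,1]$, and correspondingly $\red{E}_s = \cpt{E}_s \times_{\cpt{\Gstruc}} \RedPart$.

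Next I assemble the family into one object. The family $\{\red{E}_s\}$ is a principal $\RedPart$-subbundle $\mathcal{E} \subset E \times [0,1]$ lying over $M \times [0,1]$, with $\mathcal{E}$ restricting to $\red{E}_s$ over $M \times \{s\}$. On $\mathcal{E}$ I take the $\redpart$-component $\red{\omega}$ of $\omega$ determined by the fixed $\RedPart$-invariant splitting $\gtot = \gtot/\gstruc \oplus \redpart \oplus \nilpart$. I then check that $\red{\omega}$ is a genuine connection on $\mathcal{E} \to M \times [0,1]$, not merely on each slice: for $A \in \redpart \subset \gstruc$ the defining relation $\vec{A} \hook \omega = A$ shows that $\red{\omega}$ reproduces the infinitesimal generators of the $\RedPart$-action, while $r_g^* \omega = \Ad_g^{-1}\omega$ together with the $\RedPart$-invariance of the splitting gives $r_g^* \red{\omega} = \Ad_g^{-1}\red{\omega}$ for $g \in \RedPart$. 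With $\red{\omega}$ a connection, its curvature $\red{\nabla}\red{\omega}$ is horizontal, so $P\left(\red{\nabla}\red{\omega}\right)$ is a basic form and descends to $\Theta \in \nForms{\text{even}}{M \times [0,1]}$; by the previous lemma (whose proof used only the $\RedPart$-invariance of $P$ and the Jacobi identity) $\Theta$ is closed, and since curvature commutes with restriction to a slice, the slice inclusions $i_s : M \to M \times [0,1]$ satisfy $i_s^* \Theta = P\left(\red{E}_s\right)$. The de Rham homotopy formula $i_1^*\Theta - i_0^*\Theta = d\,h\Theta + h\,d\Theta$ then gives, using $d\Theta = 0$, that $P\left(\red{E}_1\right) - P\left(\red{E}_0\right) = d\,h\Theta$ is exact, so both reductions yield the same class in $\Cohom{\text{even}}{M}$.

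The one genuinely non-formal ingredient is the connectedness of the space of reductions, i.e. the contractibility of the fiber $\Gstruc/\cpt{\Gstruc}$; I expect this to be the main obstacle in the sense that everything downstream is then a parametrized instance of the Chern--Weil theory already in place. A secondary point deserving care is the verification that the restricted $\red{\omega}$ is honestly a connection on $\mathcal{E} \to M \times [0,1]$ rather than just on each slice, which is precisely where the adjoint-equivariance $r_g^*\omega = \Ad_g^{-1}\omega$ and the $\RedPart$-invariance of the splitting enter.
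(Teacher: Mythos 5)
Your proof is correct and takes essentially the same route as the paper's: both arguments rest on the contractibility of the fiber ($\Gstruc/\cpt{\Gstruc}$, equivalently $\Gstruc/\RedPart$) to join any two reductions by a smooth homotopy of sections, and then invoke homotopy invariance of de Rham cohomology. Your explicit assembly of the family into a bundle over $M \times [0,1]$ with the transgression formula is just a hands-on implementation of the paper's one-line appeal to pulling back the cohomology class of the closed form $P(E)$ along homotopic sections.
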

\begin{proof}
Because $\RedPart$ contains the maximal compact subgroup of
$\Gstruc$, $\Gstruc/\RedPart$ is contractible. Therefore the bundle
$E/\RedPart \to M$ (whose smooth sections are smooth [not
necessarily holomorphic] reductions of structure group) has
contractible fibers, and so any two smooth sections are smoothly
homotopic through smooth sections. The cohomology class is the
pullback of the cohomology class of $P(E)$.
\end{proof}

Obviously characteristic forms and classes of Cartan geometries pull
back under local isomorphisms.

\begin{lemma}
For any choice of reduction of structure group $\red{E} \subset E$
(from $\Gstruc$ to $\RedPart$), the principal right
$\RedPart$-bundle $\red{E} \to M$ is a holomorphic principal bundle,
although perhaps not holomorphically embedded inside $E$.
\end{lemma}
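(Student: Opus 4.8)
The plan is to build the holomorphic structure on $\red{E}$ directly from the Cartan connection, rather than from the merely smooth embedding $\red{E}\subset E$. On $\red{E}$ the connection splits as $\omega=\semib{\omega}\oplus\red{\omega}\oplus\nil{\omega}$, and I would \emph{define} an almost complex structure on $\red{E}$ by declaring the components of $\semib{\omega}$ and of $\red{\omega}$ to be a coframe of type $(1,0)$. First I would check that this is a legitimate almost complex structure: since $\dim_{\C{}}\red{E}=\dim_{\C{}}(\gtot/\gstruc)+\dim_{\C{}}\redpart$, the forms $\semib{\omega},\red{\omega}$ have exactly the right count, and using $\nil{\omega}=t\semib{\omega}$ from the Lemma above one sees that $\semib{\omega},\red{\omega}$ together with their complex conjugates are pointwise linearly independent. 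Concretely, $\semib{\omega}$ pairs with the base directions through the soldering isomorphism while $\red{\omega}$ restricts to the Maurer--Cartan form on the $\RedPart$-fibers, so the two families together with conjugates span the complexified cotangent space.

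Next I would verify integrability via the Newlander--Nirenberg theorem, i.e.\ show that $d\semib{\omega}$ and $d\red{\omega}$ lie in the differential ideal $\mathcal{I}$ generated by the $(1,0)$-forms $\semib{\omega},\red{\omega}$. For $d\red{\omega}$ this is immediate from the curvature computation above, $\red{\nabla}\red{\omega}=d\red{\omega}+\tfrac12[\red{\omega},\red{\omega}]=\red{K}\,\semib{\omega}\wedge\semib{\omega}$, since both $[\red{\omega},\red{\omega}]$ and $\semib{\omega}\wedge\semib{\omega}$ already lie in $\mathcal{I}$. For $d\semib{\omega}$ I would project the structure equation $d\omega+\tfrac12[\omega,\omega]=\kappa\,\semib{\omega}\wedge\semib{\omega}$ onto $\gtot/\gstruc$; because the decomposition $\gtot=(\gtot/\gstruc)\oplus\redpart\oplus\nilpart$ is a decomposition of $\RedPart$-modules, the bracket terms $[\redpart,\redpart]$ and $[\redpart,\nilpart]$ contribute nothing to the $\gtot/\gstruc$-part, while every surviving term is a wedge one of whose factors is $\semib{\omega}$ or $\red{\omega}$. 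Here the relation $\nil{\omega}=t\semib{\omega}$ is exactly what folds the nilpotent contributions back into $\mathcal{I}$. Hence $d\semib{\omega},d\red{\omega}\in\mathcal{I}$, the almost complex structure is integrable, and $\red{E}$ is a complex manifold.

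It then remains to see that $\pi:\red{E}\to M$ and the right $\RedPart$-action are holomorphic. The projection is holomorphic because the semibasic forms $\semib{\omega}$, which represent the pullback to $\red{E}$ of the $(1,0)$-forms of $M$, were declared of type $(1,0)$; thus $\pi^{*}$ carries $(1,0)$-forms to $(1,0)$-forms. For the action I would use $r_g^{*}\omega=\Ad_g^{-1}\omega$: for $g\in\RedPart$ the adjoint action is $\C{}$-linear and preserves the module decomposition, so $r_g^{*}$ sends $\semib{\omega},\red{\omega}$ to $\C{}$-linear combinations of themselves and each $r_g$ is biholomorphic. Infinitesimally, for $A\in\redpart$ one has $\mathcal{L}_{\vec{A}}\semib{\omega}=-\ad_A\semib{\omega}$ and $\mathcal{L}_{\vec{A}}\red{\omega}=-\ad_A\red{\omega}$, again of type $(1,0)$, so the generators $\vec{A}$ preserve the complex structure. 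Therefore the free proper $\RedPart$-action is holomorphic and $\red{E}\to M$ is a holomorphic principal $\RedPart$-bundle. The inclusion $\red{E}\subset E$ need not be holomorphic precisely because, under this complex structure, the pullback of the $E$-form $\nil{\omega}$ equals $t\semib{\omega}$ with $t$ not holomorphic for a generic reduction.

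The step deserving the most care, and the main obstacle, is the coframe and independence claim of the first paragraph together with the bookkeeping showing that every term of the projected structure equations genuinely lands in $\mathcal{I}$; once the relation $\nil{\omega}=t\semib{\omega}$ and the $\RedPart$-module structure are in hand, integrability is essentially forced and the holomorphy of $\pi$ and of the action is formal. I would also confirm that the structure produced by Newlander--Nirenberg gives local \emph{holomorphic} trivializations of $\red{E}\to M$, which follows from the holomorphic slice theorem applied to the free proper holomorphic $\RedPart$-action.
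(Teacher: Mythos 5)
Your overall strategy is the same as the paper's (declare $\semib{\omega},\red{\omega}$ to be a $(1,0)$-coframe, prove integrability from the structure equations, then check holomorphy of the projection and of the $\RedPart$-action), and your first paragraph's use of $\nil{\omega}=t\semib{\omega}$ to see that $\semib{\omega},\red{\omega}$ really do give a coframe of $\red{E}$ is correct. But the integrability step has a genuine gap: you never correctly dispose of the $\nil{\omega}\wedge\nil{\omega}$ terms. Your stated mechanism --- that $\nil{\omega}=t\semib{\omega}$ ``folds the nilpotent contributions back into $\mathcal{I}$'' --- fails, because $t$ is only \emph{real}-linear (it takes values in $\left(\gtot/\gstruc\right)^*\otimes_{\R{}}\nilpart$ and is not holomorphic for a generic reduction); on $\red{E}$ the pulled-back form $\nil{\omega}$ is of type $(1,0)+(0,1)$, so $\nil{\omega}\wedge\nil{\omega}$ has a $(0,2)$ component built from conjugates of $\semib{\omega}$, and that component does \emph{not} lie in the ideal $\mathcal{I}$. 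Your treatment of $d\red{\omega}$ has the same problem in disguised form: reading $\red{\nabla}\red{\omega}=\red{K}\,\semib{\omega}\wedge\semib{\omega}$ as a sum of wedges of $(1,0)$-forms is precisely the assertion that this curvature has no $(0,2)$ part, which is what integrability requires you to prove. Indeed that curvature has a nonzero $(1,1)$ part (it is the Fubini--Study form in the projective model), so the notation $\red{K}\,\semib{\omega}\wedge\semib{\omega}$ must be read loosely as ``semibasic,'' and semibasic 2-forms on $\red{E}$ include $(0,2)$-forms; citing the formula therefore begs the question.

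The missing idea --- the crux of the paper's proof --- is that $\nilpart\subset\gtot$ is a Lie subalgebra, so $\left[\nilpart,\nilpart\right]\subset\nilpart$. Hence the $\nil{\omega}\wedge\nil{\omega}$ terms in $\frac{1}{2}\left[\omega,\omega\right]$ take values in $\nilpart$ and contribute nothing to the $\gtot/\gstruc$- and $\redpart$-components: they simply never appear in $d\semib{\omega}$ or $d\red{\omega}$. Every bracket term that does appear carries a factor of $\semib{\omega}$ or $\red{\omega}$, which are $(1,0)$ by fiat, so $d\semib{\omega}$ and $d\red{\omega}$ have no $(0,2)$ part and the almost complex structure is integrable. (Note that the terms you explicitly discard, $\left[\redpart,\redpart\right]$ and $\left[\redpart,\nilpart\right]$, are harmless anyway, since they come with a factor of $\red{\omega}$; the one dangerous term is exactly the one your argument does not reach.) Your remaining steps --- holomorphy of the projection via the semibasic $(1,0)$-forms, and holomorphy of the action via complex linearity of $\Ad_g$ on the $\RedPart$-module decomposition --- agree with the paper and are fine.
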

\begin{proof}
The $(1,0)$-forms $\nil{\omega}$ on $E$ will not generally pull back
in any simple way to $\red{E}$. However, $\semib{\omega}$ and
$\red{\omega}$ will pull back to a complex coframing on $\red{E}$.
Therefore there is a unique almost complex structure on $\red{E}$
for which these forms are $(1,0)$:
\[
\begin{pmatrix}
Jv \hook \semib{\omega} \\
Jv \hook \red{\omega} \\
\end{pmatrix}
=
i
\begin{pmatrix}
v \hook \semib{\omega} \\
v \hook \red{\omega}
\end{pmatrix}.
\]

Lets look back at $E$. Because $\nilpart \subset \gtot$ is a Lie
subalgebra, $\left[\nilpart,\nilpart\right] \subset \nilpart$, so
there are no $\nil{\omega} \wedge \nil{\omega}$ terms in $d
\semib{\omega}$ or in $d \red{\omega}$. Therefore pulling back to
$\red{E}$, even though $\nil{\omega}$ might have become
$(1,0)+(0,1)$, there are no $(0,2)$-terms in $d \semib{\omega}$ or
in $d \red{\omega}$. So the almost complex structure is a complex
structure.

On $E$, the $(1,0)$-form $\semib{\omega}$ is semibasic, and at each
point of $E$ it is pulled back from a $(1,0)$-form on $M$ which
forms a complex coframing. By pullback, the same is true on
$\red{E}$. Therefore $\red{E} \to M$ is a holomorphic map.

The group action of $\RedPart$ on $\red{E}$ transforms
$\semib{\omega}$ and $\red{\omega}$ in the obvious representation,
which is complex linear, and therefore is complex analytic.
\end{proof}

\begin{remark}
The apparent miracle that a smooth submanifold $\red{E} \subset E$
should inherit a complex structure from $E$ without being a complex
submanifold, is not really so remarkable. If there is a closed
connected complex subgroup $\NilPart \subset \Gstruc$ with Lie
algebra $\nilpart \subset \gstruc$, then the composition $\red{E}
\to E \to E/\NilPart$ is a biholomorphism.

The complex vector space splitting
\[
\gstruc = \redpart \oplus \nilpart
\]
ensures that at least locally we can find a holomorphic ``local
subgroup'' transverse to $\RedPart$ in $\Gstruc$. Therefore we can
locally holomorphically construct a transversal to the
$\RedPart$-action on $E$. This transversal replaces $E/\NilPart$,
and then we see that the complex structure on $\red{E}$ varies
biholomorphically with the choice of reduction.
\end{remark}
\begin{remark}
The characteristic form $P(E)$ is a holomorphic $(p,0)$-form ($p=2
\deg P$), because the entire construction on $E$ is complex
analytic. However, $P\left(\red{E}\right)$ is a sum of forms of
various degrees $(p,q)$ with $p+q=2 \deg P$. We have little control
on the degrees which can occur. But the summands in degrees other
than $(p,p)$ will usually vanish in cohomology.
\end{remark}
\begin{proposition}[Singer \cite{Singer:1959}]
The characteristic classes of a holomorphic principal bundle on a
K\"ahler manifold are $(p,p)$-classes.
\end{proposition}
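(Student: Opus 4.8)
The plan is to compute the characteristic class by Chern--Weil theory using a connection whose curvature is of pure type $(1,1)$, and then to invoke Hodge theory on the compact K\"ahler manifold $M$ to conclude that a closed form of pure type $(k,k)$ represents a class of Hodge type $(k,k)$. Write the structure group of the holomorphic principal bundle $F \to M$ as a complex reductive group $G$ with maximal compact subgroup $K \subset G$, and let $\mathfrak{g} = \mathfrak{k} \oplus i\mathfrak{k}$ be the corresponding decomposition, with $c \colon \mathfrak{g} \to \mathfrak{g}$ the conjugation fixing $\mathfrak{k}$. Since $G/K$ is contractible, $F$ admits a smooth (not necessarily holomorphic) reduction of structure group to a principal $K$-bundle $F_K \subset F$, exactly as in the earlier reduction lemmas of this paper; any two such reductions are homotopic, and by the standard independence-of-connection principle of Chern--Weil theory the class computed below does not depend on these choices. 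So I am free to pick the most convenient connection.

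First I would produce the analogue of the Chern connection. On a holomorphic principal bundle equipped with a reduction $F_K$ to the maximal compact, there is a unique connection $\theta$ on $F_K$ whose extension to $F$ has $(0,1)$-part equal to the $\bar\partial$-operator of the holomorphic structure. This is the crux of the argument, and the step I expect to be the main obstacle, since it must be established for a general reductive group rather than for $\GL{}$, where it is the familiar Hermitian--holomorphic connection. Granting it, I claim its curvature $\Theta = d\theta + \tfrac{1}{2}[\theta,\theta]$ is of pure type $(1,1)$. The $(0,2)$-component is the curvature of the $(0,1)$-part of $\theta$, which vanishes because $\bar\partial^2 = 0$ expresses the integrability of the holomorphic structure. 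The $(2,0)$-component vanishes as well: since $\theta$, and hence $\Theta$, are $\mathfrak{k}$-valued we have $c(\Theta) = \Theta$, while conjugation of forms interchanges the $(2,0)$- and $(0,2)$-components, so that $\Theta^{2,0} = c\bigl(\overline{\Theta^{0,2}}\bigr) = 0$. It is precisely the reality structure $\mathfrak{k} \subset \mathfrak{g}$ that forces the two off-diagonal types to vanish together.

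With a curvature of pure type $(1,1)$ in hand, the conclusion is quick. For an $\Ad$-invariant polynomial $P$ homogeneous of degree $k$, Chern--Weil theory represents the associated characteristic class by the closed $2k$-form $P(\Theta)$ obtained by feeding $\Theta$ into the polarization of $P$ and wedging the form parts. Because $\Theta$ is of type $(1,1)$, every monomial is a wedge of $k$ forms of type $(1,1)$, so $P(\Theta)$ is of pure type $(k,k)$. Finally, on a compact K\"ahler manifold the Laplacian commutes with the projection onto each bidegree, so the harmonic projection of a closed form of type $(k,k)$ is again of type $(k,k)$; hence the de Rham class of $P(\Theta)$ lies in $H^{k,k}(M)$. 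This is exactly the assertion that the characteristic classes are $(p,p)$-classes, with $p = k$.
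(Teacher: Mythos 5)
Your strategy is exactly the paper's: reduce the structure group to the maximal compact, take the unique connection whose $(0,1)$-part is $\bar\partial$ (the Chern-type ``compatible'' connection), show its curvature is of pure type $(1,1)$, and conclude by Chern--Weil theory together with the fact that on a K\"ahler manifold a closed $(p,p)$-form represents a $(p,p)$-class. Indeed your argument for the vanishing of the $(2,0)$-component --- the curvature is $\mathfrak{k}$-valued, and the conjugation fixing $\mathfrak{k}$ interchanges the $(2,0)$- and $(0,2)$-types --- is a more careful justification than the paper's one-line remark that the $(2,0)$ part is $\partial^2=0$.

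The genuine gap is the one you flagged yourself and then skipped: the existence and uniqueness of the compatible connection for a general complex reductive structure group is precisely the content of the paper's proof, and ``granting it'' leaves the proposition unproved, since for groups other than $\operatorname{GL}(n,\mathbb{C})$ one cannot simply invoke the Hermitian-metric construction of the Chern connection. The paper fills this in with a short local computation that you could reproduce: over an open set $U$, compare a holomorphic local trivialization of the holomorphic bundle with a smooth local trivialization of its compact reduction; they differ by a smooth map $k : U \to G$. Using the Cartan decomposition $G = K e^{\Pi}$ (this is where reductivity of $G$ genuinely enters), one may change the smooth trivialization by a map $h : U \to K$ so as to arrange $k : U \to e^{\Pi}$. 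Writing the putative connection as $g^{-1}\,dg + \operatorname{Ad}_g^{-1}\theta$ in the holomorphic trivialization and as $g^{-1}\,dg + \operatorname{Ad}_g^{-1}\theta^c$ in the compact one, the matching condition is $\theta = \operatorname{Ad}_k \theta^c - dk\,k^{-1}$; decomposing into $dz$- and $d\bar z$-components and imposing compatibility (the $d\bar z$-component of $\theta$ must vanish, which is the statement that the $(0,1)$-part of the induced connection is $\bar\partial$) yields a system that can be solved explicitly and uniquely for the connection form in terms of $k$. Local uniqueness then gives global existence and uniqueness by patching. Without this step --- or some substitute, such as a careful argument that a Chern connection of a faithful unitary representation reduces to the $G$-bundle --- your proof is an outline at its crucial point rather than a proof.
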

\begin{proof}
First we need:
\begin{lemma}[Singer \cite{Singer:1959}]
There is a unique connection on $\cpt{E}$ whose induced connection
on $\red{E}$ is compatible with the complex structure, i.e. so that
$\left(\red{\nabla}\right)^{(0,1)}=\bar \partial$.
\end{lemma}
\begin{proof}
Consider a holomorphic local trivialization of $\red{E}$ and a
smooth local trivialization of $\cpt{E}$, defined in the same open
set $U \subset M$. They are related by a map
\[
\left(z,g\right) \mapsto \left(z,k(z) g\right),
\]
for $g \in \cpt{\RedPart}$, and $k : U \to \RedPart$ is smooth.
Split diffeomorphically by Cartan decomposition
$\RedPart=\cpt{\RedPart} e^\Pi$, where $\Pi \subset \redpart$ is the
locus fixed under a Cartan involution. Changing the smooth
trivialization, we can replace $k(z)$ by $h(z) k(z)$ for any $h : U
\to \cpt{\RedPart}$. Therefore we can arrange that $k : U \to
e^\Pi$. If we have a connection $g^{-1} \, dg + \Ad_g^{-1}
\cpt{\theta}$ in one trivialization, and another $g^{-1} \, dg +
\Ad_g^{-1} \theta$ in the other trivialization, then they will match
up only if
\[
\theta = \Ad_k \cpt{\theta} - dk \, k^{-1}.
\]
If we write $\cpt{\theta}=\cpt{A} \, dz + \cpt{\bar{A}} \, d\bar{z}$
and $\theta=A \, dz + B \, d\bar{z}$, then
we find
\[
B = \Ad_k \cpt{\bar{A}} - \pd{k}{\bar {z}} \, k^{-1}.
\]
Therefore compatible connections are precisely those with
\[
\cpt{\bar{A}} = k^{-1} \, \pd{k}{\bar{z}},
\
\cpt{A} = \bar{k}^{-1} \, \pd{\bar{k}}{z} + \Ad_{k}^{-1} A.
\]
Solving, we find
\begin{align*}
A &= k \, \Ad_{\bar{k}}^{-1} \pd{\bar{k}}{z}
-
\pd{k}{z} \, k^{-1}\\
B &= 0 \\
\cpt{A} &= \bar{k}^{-1} \, \pd{\bar{k}}{z}.
\end{align*}

Global existence and uniqueness follows by local existence and
uniqueness.
\end{proof}
Characteristic classes are independent of choice of connection, so
we can pick the compatible connection.
\begin{lemma}
On $\red{E}$, the curvature of the compatible connection has type
$(1,1)$.
\end{lemma}
\begin{proof}
The $(0,2)$ part is $\bar\partial^2=0$, and the $(2,0)$ part is
$\partial^2=0$.
\end{proof}
Therefore the characteristic forms for this connection are
$(p,p)$-forms. On a K\"ahler manifold, the decomposition of forms
descends to a decomposition of cohomology, and a closed $(p,p)$-form
represents a $(p,p)$-cohomology class.
\end{proof}

\section{The characteristic ring}
We prove theorem~\vref{theorem:QuotientRing}.
\begin{proof}
Consider the ring morphism $P(G) \mapsto \left[P(E)\right]$.
Both rings are obtained as quotients of the ring of invariant
polynomials on $\redpart$. So we need to check that every relation
satisfied by forms on the model is satisfied in cohomology in every
such geometry.

Suppose that $P(G)=0$ in the model, for some invariant polynomial
$P$. Consider the expression $P(E)$. This agrees entirely with
$P(G)$, modulo terms involving the curvature of the Cartan geometry.
The Cartan geometry curvature is $(2,0)$, and semibasic, i.e.
multiples of $\semib{\omega}$. On $\red{E}$, $\semib{\omega}$
remains $(1,0),$ so these curvature terms remain $(2,0)$. The 1-form
$\red{\omega}$ also remains $(1,0)$, but the 1-form $\nil{\omega}$
can be $(1,0)+(0,1)$. Since $\NilPart$ is a Lie subalgebra, there
are no $\nil{\omega} \wedge \nil{\omega}$ terms in $d
\semib{\omega}$ or $d \red{\omega}.$ So $d \semib{\omega}$ and $d
\red{\omega}$ are $(2,0)+(1,1)$. The Cartan connection curvature
terms only appear in the $(2,0)$-parts. Therefore the resulting
$(p,p)$-part has no Cartan connection curvature. It must therefore
be given by replacing $\nil{\omega}$ by its $(0,1)$-part in
equations for $d \semib{\omega}$ and $d \red{\omega}$. However, up
on $\Gtot$, the expression for $P(\Gtot)$ vanishes with linearly
independent $1$-forms appearing as $\nil{\omega}$. Therefore, these
expressions must vanish on $\red{E}$.
\end{proof}
\begin{lemma}
If $W$ is a $\Gstruc$-module, we can give rise to an associated
vector bundle $E \times_{\Gstruc} W \to M$ on every Cartan geometry
modelled on some $\Gtot/\Gstruc$. If we choose $E=\Gtot$, then this
gives an isomorphism taking the representation semiring of $\Gstruc$
to the semiring of $\Gtot$-equivariant vector bundles on
$\Gtot/\Gstruc$. For any choice of Cartan geometry $E \to M$, we can
map $\Gtot$-equivariant vector bundles on $\Gtot/\Gstruc$ to vector
bundles on $M$, via $W \mapsto E \times_{\Gstruc} W$. This rule maps
characteristic forms to characteristic classes, the same ring
morphism as above.
\end{lemma}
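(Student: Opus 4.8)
The plan is to verify the three assertions of the lemma in turn---the associated-bundle construction as a semiring morphism, the isomorphism on the model, and the compatibility with characteristic classes---and then to identify the resulting map with the morphism of theorem~\ref{theorem:QuotientRing}. First I would recall the standard associated-bundle construction: for any principal right $\Gstruc$-bundle $E\to M$ and any $\Gstruc$-module $W$, the quotient $E\times_{\Gstruc}W=(E\times W)/\Gstruc$ (with $\Gstruc$ acting by $(e,w)g=(eg,g^{-1}w)$) is a vector bundle on $M$, and $W\mapsto E\times_{\Gstruc}W$ carries Whitney sums to Whitney sums and tensor products to tensor products, hence is a morphism of semirings. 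For the model, taking $E=\Gtot$, every $\Gtot$-equivariant bundle on $\Gtot/\Gstruc$ is of the form $\Gtot\times_{\Gstruc}W$ (as recalled in the section explaining why we expect relations), which gives surjectivity, while the fiber over the basepoint $e\Gstruc$ recovers $W$ with its $\Gstruc$-action, which gives injectivity; so $W\mapsto\Gtot\times_{\Gstruc}W$ is a semiring isomorphism onto the equivariant bundles.

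Next I would reduce the structure group. Since $E=\cpt{E}\times_{\cpt{\Gstruc}}\Gstruc$ and $\red{E}=\cpt{E}\times_{\cpt{\Gstruc}}\RedPart$ with $\cpt{\Gstruc}\subset\RedPart$, the balanced-product identity gives $E=\red{E}\times_{\RedPart}\Gstruc$, and therefore a canonical isomorphism of vector bundles
\[
E\times_{\Gstruc}W\cong\red{E}\times_{\RedPart}W,
\]
where on the right $W$ is regarded as a $\RedPart$-module by restriction. (Equivalently, the fibers $\Gstruc/\RedPart$ are contractible, so the two bundles are smoothly isomorphic, which already suffices for characteristic classes.)

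Now $\red{\omega}$ is a genuine connection on $\red{E}\to M$ with curvature $\red{\nabla}\red{\omega}$, so Chern--Weil theory computes the characteristic classes of $\red{E}\times_{\RedPart}W$ directly from $\red{\nabla}\red{\omega}$: each such class equals $\left[P\left(\red{\nabla}\red{\omega}\right)\right]=\left[P(\red{E})\right]=[P(E)]$, where $P$ is the $\RedPart$-invariant polynomial on $\redpart$ obtained by pushing the curvature through the representation on $W$ (for the Chern classes, the elementary symmetric functions of the eigenvalues of the induced curvature in $\gl{W}$). This is exactly a characteristic form of the kind studied above. To see that this is the same ring morphism as in theorem~\ref{theorem:QuotientRing}, I would observe that the polynomial $P$ attached to $W$ is independent of the geometry: on the model $E=\Gtot$ the same computation yields the characteristic form $P(\Gtot)$ of $\Gtot\times_{\Gstruc}W$, while on a general $E$ it yields $[P(E)]$. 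Since the map of theorem~\ref{theorem:QuotientRing} is precisely $P(\Gtot)\mapsto[P(E)]$, the square relating ``take the associated bundle'' and ``take characteristic classes'' commutes, and the semiring operations are respected on both sides by the naturality of Chern--Weil theory.

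The main obstacle is the identification $E\times_{\Gstruc}W\cong\red{E}\times_{\RedPart}W$ together with the verification that the Chern--Weil polynomial arising from $W$ is genuinely $\RedPart$-invariant, so that the form $P(E)$ is the one controlled by the earlier lemmas. Once the connection $\red{\omega}$ and this reduction are in hand, everything else reduces to the already-established facts that $\red{\omega}$ is a connection, that $P(E)$ is closed and pulls back to $P(\red{E})$, and that $P(\Gtot)\mapsto[P(E)]$ is the morphism of theorem~\ref{theorem:QuotientRing}.
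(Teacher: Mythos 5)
Your proposal is correct and follows essentially the same route as the paper: the paper's (one-sentence) proof is precisely the observation that the characteristic classes of $E \times_{\Gstruc} W$ are computed on a reduction $\red{E}$, where the identification $E \times_{\Gstruc} W \cong \red{E} \times_{\RedPart} W$ and Chern--Weil theory for the connection $\red{\omega}$ give exactly the characteristic forms $P\left(\red{E}\right)$, hence the morphism $P(\Gtot) \mapsto \left[P(E)\right]$ of theorem~\ref{theorem:QuotientRing}. You have simply spelled out the details that the paper leaves implicit.
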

\begin{proof}
The characteristic classes must restrict to each reduction to be
given by the restrictions of the characteristic forms.
\end{proof}
\begin{lemma}
Suppose that $E \to M$ is a Cartan geometry, and $M$ is a closed
K\"ahler manifold. Then the Chern classes of the tangent bundle of
$M$ satisfy any relations that are satisfied between the Chern forms
of $\Gtot/\Gstruc$.
\end{lemma}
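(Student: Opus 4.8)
The plan is to recognize the tangent bundle $TM$ as one particular associated bundle and then invoke Theorem~\ref{theorem:QuotientRing} together with the preceding associated-bundle lemma, applied to the module $W = \gtot/\gstruc$. First I would recall the defining feature of a Cartan geometry: the soldering form $\semib{\omega}$ identifies each tangent space $T_m M$ with $\gtot/\gstruc$ in a $\Gstruc$-equivariant way, so that $TM \cong E \times_{\Gstruc} \left(\gtot/\gstruc\right)$ as holomorphic vector bundles, the $\Gstruc$-module being exactly the isotropy representation. On the model this identification degenerates to the tautology $T\left(\Gtot/\Gstruc\right) = \Gtot \times_{\Gstruc}\left(\gtot/\gstruc\right)$, so the tangent bundle of $M$ and the tangent bundle of the model are associated to one and the same $\Gstruc$-module $\gtot/\gstruc$.

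Next I would pass to the reduction $\red{E} \subset E$. Reduction of structure group does not change the associated bundle, so $TM \cong \red{E}\times_{\RedPart}\left(\gtot/\gstruc\right)$, with $\gtot/\gstruc$ now viewed as an $\RedPart$-module by restriction. Since $\red{E}\to M$ is a holomorphic principal $\RedPart$-bundle carrying the connection $\red{\omega}$, standard Chern--Weil theory identifies the Chern forms of $TM$ with the characteristic forms $P\left(\red{E}\right)$, where $P$ runs over the $\RedPart$-invariant polynomials on $\redpart$ obtained by feeding the curvature $\red{\nabla}\red{\omega}$ of the representation $\gtot/\gstruc$ into the elementary symmetric functions. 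The very same polynomials $P$, evaluated on the model, produce the Chern forms of $\Gtot/\Gstruc$ as $P\left(\Gtot\right)$.

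With this dictionary in place the lemma is immediate. A relation among the Chern forms of $\Gtot/\Gstruc$ is a polynomial identity $R\left(c_1,c_2,\dots\right)=0$ among those forms; composing $R$ with the elementary symmetric functions of the curvature of $\gtot/\gstruc$ packages it into a single $\RedPart$-invariant polynomial $\tilde{P}$ with $\tilde{P}\left(\Gtot\right)=0$. Theorem~\ref{theorem:QuotientRing} then yields $\bigl[\tilde{P}(E)\bigr]=0$ in cohomology, which is precisely the assertion that $R\bigl(c_1(TM),c_2(TM),\dots\bigr)=0$. The closedness of $M$ and the K\"ahler hypothesis enter only through Theorem~\ref{theorem:QuotientRing}, which needs them (via the Hodge decomposition and the Singer proposition) to discard the off-diagonal $(p,q)$-summands of $\tilde{P}\left(\red{E}\right)$ and conclude the relation in cohomology.

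The step I expect to demand the most care is the identification in the second paragraph: one must verify that the abstract characteristic forms $P\left(\red{E}\right)$, defined in the paper purely through $\red{\nabla}\red{\omega}$, genuinely compute the Chern forms of $TM$ rather than of some auxiliary bundle. This hinges on checking that the soldering-form isomorphism $TM \cong \red{E}\times_{\RedPart}\left(\gtot/\gstruc\right)$ is both $\RedPart$-equivariant and holomorphic, so that the connection induced by $\red{\omega}$ on $TM$ is exactly the one whose Chern--Weil representatives are the $P\left(\red{E}\right)$. Once that is settled, the statement is a formal consequence of the ring morphism already constructed.
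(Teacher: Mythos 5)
Your proposal is correct and follows essentially the same route as the paper: the paper's entire proof is the observation that $TM = E \times_{\Gstruc} W$ for $W = \gtot/\gstruc$, after which the preceding associated-bundle lemma and Theorem~\ref{theorem:QuotientRing} do the rest. Your additional paragraphs simply make explicit the details (holomorphy and $\RedPart$-equivariance of the soldering-form identification, packaging a polynomial relation among Chern forms into a single invariant polynomial) that the paper leaves implicit in that one line.
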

\begin{proof}
The tangent bundle is $TM = E \times_{\Gstruc} W$
for $W=\gtot/\gstruc$.
\end{proof}
\begin{example}[Affine connections]
The characteristic forms of affine space all vanish. Therefore the
characteristic classes of any closed K\"ahler manifold which admits
a holomorphic affine connection vanish.
\end{example}
\begin{example}[Projective connections]
On $\Gtot=\SL{n+1,\C{}}$,
\[
\red{\nabla} \red{\omega} =
-
\left(
        \delta^i_j \omega_k
        +
        \delta^i_k \omega_j
\right) \wedge \omega^k.
\]
For example, lets look at the projective plane, $n=2$. Then
\[
\red{\nabla} \red{\omega}
=
-
\begin{pmatrix}
2 \omega_1 \wedge \omega^1 + \omega_2 \wedge \omega^2 & \omega_2 \wedge \omega^1 \\
\omega_1 \wedge \omega^2 & \omega_1 \wedge \omega^1 + 2 \omega_2 \wedge \omega^2 \\
\end{pmatrix}.
\]
The Chern forms are
\begin{align*}
c_1(T) &= \tr \left( \frac{\sqrt{-1}}{2 \pi} \red{\nabla} \red{\omega} \right) \\
&= -\frac{3 \, \sqrt{-1}}{2 \pi}
\left(
\omega_1 \wedge \omega^1 + \omega_2 \wedge \omega^2
\right) \\
c_2(T) &= \det \left( \frac{\sqrt{-1}}{2 \pi} \red{\nabla} \red{\omega} \right) \\
&= -\frac{3}{2 \pi^2} \omega_1 \wedge \omega^1 \wedge \omega_2 \wedge \omega^2 \\
&=
\frac{1}{3} c_1(T)^2.
\end{align*}
This calculation occurs entirely on $\Gtot=\SL{3,\C{}}$, but it
gives the correct relation among Chern classes for any
closed K\"ahler surface bearing a projective connection.

\begin{theorem}[Gunning \cite{Gunning:1978}]
On $\Proj{n}$, the Chern classes of the tangent bundle $T$ satisfy
\[
c_p(T) = \binom{n+1}{p} \left(\frac{c_1(T)}{n+1}\right)^p.
\]
Therefore any closed K\"ahler manifold $M$ with a holomorphic Cartan
geometry modelled on $\Proj{n}$ (a holomorphic projective
connection) must satisfy these same equations.
\end{theorem}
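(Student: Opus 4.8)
The statement has two parts, and I expect them to be of very unequal difficulty. The first is the intrinsic relation among the Chern classes of $T\Proj{n}$; the second is its transfer to an arbitrary closed K\"ahler $M$ carrying a projective connection. The second part I expect to be automatic: since $TM = E \times_{\Gstruc}\left(\gtot/\gstruc\right)$, Theorem~\ref{theorem:QuotientRing} (in the form of the preceding lemma, that the Chern classes of $M$ inherit every relation holding among the Chern forms of the model) says precisely that any polynomial identity satisfied by the Chern forms of $\Proj{n}$ descends to a cohomological identity among the Chern classes of $M$. So the plan is to prove the identity at the level of forms on the model and then quote this transfer principle; all of the genuine work lives on $\Proj{n}$ itself.

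To compute on the model I would continue the previous example, now on $\Gtot = \SL{n+1,\C{}}$, where
\[
\red{\nabla}\red{\omega} = -\left(\delta^i_j \omega_k + \delta^i_k \omega_j\right)\wedge\omega^k .
\]
Writing $\phi = \sum_k \omega^k \wedge \omega_k$ and $\Psi^i_j = \omega^i \wedge \omega_j$, this curvature matrix is simply $\Omega = \phi\, I + \Psi$. Setting $t = \tfrac{\sqrt{-1}}{2\pi}$ and $h = t\phi$, the trace gives $c_1(T) = \tr\left(t\,\Omega\right) = (n+1)\,t\phi = (n+1)h$, so that $h = c_1(T)/(n+1)$. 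The total Chern form is $c(T) = \Det\left(I + t\,\Omega\right)$, and once I establish $c(T) = (1+h)^{n+1}$ (as an identity of forms in degrees $\le 2n$, which is automatic since $\Det$ of an $n\times n$ matrix has degree $\le 2n$), comparing coefficients yields $c_p(T) = \binom{n+1}{p} h^p$, the asserted relation.

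The one genuinely delicate step is evaluating $\Det\left(I + s\,\Psi\right)$. Because $\Psi$ is assembled from the anticommuting $1$-forms $\omega^i,\omega_j$, it is \emph{not} a rank-one update in the naive sense: one has $\tr\Psi = \phi$ whereas the formal ``$v^{\top}u$'' equals $-\phi$, so the matrix determinant lemma cannot be applied directly. Instead I would exploit the Cayley--Hamilton-type identity $\Psi^2 = -\phi\,\Psi$, which follows from $\sum_k \omega_k \wedge \omega^k = -\phi$ together with the evenness of the $2$-form $\phi$. This gives $\Psi^m = (-\phi)^{m-1}\Psi$, hence the power sums $\tr\left(\Psi^m\right) = (-1)^{m-1}\phi^m$, and Newton's identities (equivalently the generating function $\log\Det\left(I + s\Psi\right) = \sum_m \tfrac{(-1)^{m-1}}{m}s^m\,\tr\Psi^m$) collapse to $\Det\left(I + s\Psi\right) = \sum_k \phi^k s^k$. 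Factoring $\Det\left((1+t\phi)I + t\Psi\right) = (1+t\phi)^n\,\Det\left(I + s\Psi\right)$ with $s = t/(1+t\phi)$ then produces $c(T) = (1+t\phi)^{n}\,(1+t\phi) = (1+h)^{n+1}$, as required. I expect this determinant evaluation — getting the Grassmann signs right and recognising the quadratic relation on $\Psi$ — to be the only real obstacle; the passage to $M$ costs nothing beyond Theorem~\ref{theorem:QuotientRing}.

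As a cross-check I would note that the same total Chern class drops out of the Euler sequence
\[
0 \to \mathcal{O} \to \mathcal{O}(1)^{\oplus(n+1)} \to T\Proj{n} \to 0,
\]
giving $c\left(T\Proj{n}\right) = (1+h)^{n+1}$ with $h$ the hyperplane class; but it is the curvature computation that fits the Cartan-geometric framework here and makes the transfer to an arbitrary $M$ transparent, since it is the Chern \emph{forms} built from $\red{\nabla}\red{\omega}$ that the quotient morphism acts upon.
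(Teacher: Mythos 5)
Your proposal is correct, and all of the sign-sensitive steps check out: with $\phi = \sum_k \omega^k\wedge\omega_k$ and $\Psi^i_j = \omega^i\wedge\omega_j$ one indeed has $\red{\nabla}\red{\omega} = \phi\,I + \Psi$, the quadratic relation $\Psi^2 = -\phi\,\Psi$ holds (using $\sum_k\omega_k\wedge\omega^k = -\phi$ and centrality of the even form $\phi$), Newton's identities then give $\Det\left(I+s\Psi\right) = \sum_k \phi^k s^k$, and the factorization with $s = t/(1+t\phi)$ (legitimate since $t\phi$ is nilpotent, $\phi^{n+1}=0$) yields $c(T) = (1+h)^{n+1}$ as an identity of invariant forms on $\SL{n+1,\C{}}$. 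The transfer step is exactly the paper's: you quote the lemma that the Chern classes of a closed K\"ahler $M$ inherit any relation holding among the Chern forms of the model, which is the intended application of Theorem~\ref{theorem:QuotientRing}.

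Where you differ from the paper is in the model computation, and the difference is worth recording. The paper never proves the relation on $\Proj{n}$ for general $n$: it states the theorem with attribution to Gunning, and only verifies the case $n=2$ by writing out the $2\times 2$ curvature matrix and checking $c_2 = \tfrac{1}{3}c_1^2$. Your determinant argument is a complete, self-contained proof for all $n$, carried out inside the Cartan-geometric framework (i.e., directly from $\red{\nabla}\red{\omega}$ on $\Gtot$). This buys something concrete: it establishes the relations at the level of invariant \emph{forms} on $\Gtot$, which is precisely the hypothesis of the form-level transfer lemma. Relations known only cohomologically on the model (e.g., via the Euler sequence, your cross-check) would instead require the paper's later parabolic-geometry lemma, where one writes $P\left(\cpt{\Gtot}\right) = d\xi$ with $\xi$ invariant and argues that curvature corrections stay out of type $(p,p)$ --- a more delicate argument that your computation renders unnecessary for this model. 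The paper's route is shorter on the page only because it outsources the model computation to a citation.
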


For \emph{normal} Cartan geometries modelled on $\Proj{n}$, also
called normal projective connections, this was proven by Gunning
\cite{Gunning:1978}. See \v{C}ap \cite{Cap:2005} for the definition
of normalcy. We have generalized Gunning's work, not requiring
normalcy, which is vital to his proof, since he works with normal
projective connections as objects in a certain sheaf cohomology.
Gunning's interpretation of normal projective connections as objects
in sheaf cohomology has no known generalization to abnormal
projective connections or more general Cartan connections. It would
appear difficult and unnatural to attempt to find such a
generalization. We are pruning (or weeding) the sheaf cohomology.

\end{example}
\begin{remark}
One could study secondary characteristic classes of flat Cartan
geometries by a similar mechanism.
\end{remark}

\section{Atiyah classes}
We recall the definition of Atiyah class: any exact sequence of
vector bundles
\[
0 \to A \to B \to C \to 0
\]
induces an exact sequence
\[
0 \to C^* \otimes A \to C^* \otimes B \to C^* \otimes C \to 0.
\]
In cohomology, we find the exact sequence
\[
\xymatrix{
\dots \ar[r] &
\Cohom{0}{C^* \otimes C} \ar[r]^{\delta} &
\Cohom{1}{C^* \otimes A} \ar[r] &
\dots.}
\]
One obvious global section of $C^* \otimes C$ is the identity map
$1_C$. Fix $A$ and $C$, and imagine looking for all of the possible
choices of vector bundles $B$ to put in the middle. There is an
obvious choice: $B=A \oplus C$, which has $\delta 1_C=0$. It turns
out that $\delta 1_C$ determines $B$ up to isomorphism.

For a complex Lie group $G$ and holomorphic principal right
$G$-bundle, $\pi : Z \to M$, on any complex manifold $M$, there is a
vector bundle on $Z$ of vertical vectors, $\ker \pi'$. At each point
$z \in Z$ in the fiber over a point $m \in M$, we find an exact
sequence of vector spaces
\[
0 \to \ker \pi'(z) \to T_Z \to T_m M \to 0.
\]
We can identify each vertical vector $v \in \ker \pi'(z)$ with a
unique vector $A \in \mathfrak{g}$ in the Lie algebra of $G$: $A$ is
the element whose infinitesimal right action on $Z$ is given by the
vector $v$. Hence $\ker \pi'(z) \cong \mathfrak{g}$. This yields an
exact sequence of vector bundles on $M$:
\[
0 \to Z \times_G \mathfrak{g} \to TZ^{G} \to TM \to 0.
\]
where $TZ^G$ is the bundle on $M$ whose local sections are
$G$-invariant vector fields on $Z$. A holomorphic connection on $Z$
is a choice of splitting of this exact sequence.

The identity element $1_{TM} \in T^*M \otimes TM$ yields an element
$a(Z)=\delta 1_{TM}$, called the \emph{Atiyah class} of $Z$. If
there is a holomorphic connection, then clearly $a(Z)=0$, since the
exact sequences split. In fact, Atiyah \cite{Atiyah:1957} proves
that the exact sequences split, and hence there is a holomorphic
connection, just when $a(Z)=0$.

For a smooth (not necessarily holomorphic) $(1,0)$-connection
$\theta$ on $Z$, define $a(\theta) = \left(\nabla^{\theta}
\theta\right)^{(1,1)}$, the $(1,1)$-part of the curvature. This has
a cohomology class $[a(\theta)] \in \Cohom{1,1}{M, Z \times_G
\mathfrak{g}}$. Atiyah \cite{Atiyah:1957} proves that this
cohomology class is independent of the choice of $(1,0)$-connection,
and corresponds to $a(Z)$ under the Dolbeault isomorphism
\[
\Cohom{1}{M, \nForms{1}{M} \otimes \left(Z \times_G \mathfrak{g} \right)}
\to
\Cohom{1,1}{M, Z \times_G \mathfrak{g}}.
\]

Let $\Gstruc \subset \Gtot$ be a closed complex subgroup of a
complex Lie group. Let $\NilPart \subset \Gstruc$ be a closed normal
complex subgroup. For any Cartan geometry $E \to M$ modelled on
$\Gtot/\Gstruc$, we can construct the Atiyah class $a(E/\NilPart)$.

\begin{lemma}
Let $E \to M$ be a Cartan geometry with model $\Gtot/\Gstruc$,
decomposed as $\gstruc = \redpart \oplus \nilpart$. Suppose that
there is a closed normal complex Lie subgroup $\NilPart \subset
\Gstruc$, with Lie algebra $\nilpart$.  Then
\[
a\left(E/\NilPart\right)=\left(\red{\nabla} \red{\omega}\right)^{(1,1)},
\]
for any choice of reduction $\red{E} \subset E$,
under the obvious biholomorphism $\red{E} \to E \to E/\NilPart$.
\end{lemma}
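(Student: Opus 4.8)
The plan is to identify the holomorphic principal bundle $E/\NilPart \to M$ with the reduction $\red{E}$, to recognize $\red{\omega}$ as a smooth $(1,0)$-connection on it, and then to invoke Atiyah's curvature formula directly. First I would use normality of $\NilPart$ to see that $E/\NilPart \to M$ is a holomorphic principal bundle with structure group $\Gstruc/\NilPart$. The splitting $\gstruc = \redpart \oplus \nilpart$ makes the composite $\RedPart \hookrightarrow \Gstruc \to \Gstruc/\NilPart$ an isomorphism (the Levi decomposition $\Gstruc = \RedPart \ltimes \NilPart$), so I may view $E/\NilPart$ as a holomorphic principal $\RedPart$-bundle, with Lie algebra $\gstruc/\nilpart \cong \redpart$. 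The remark preceding the lemma supplies the biholomorphism $\red{E} \to E \to E/\NilPart$; since it is equivariant for this isomorphism of structure groups, it identifies $\red{E}$ and $E/\NilPart$ as holomorphic principal $\RedPart$-bundles.

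Next I would transport $\red{\omega}$ across this biholomorphism and check it is a $(1,0)$-connection. On $\red{E}$ the form $\red{\omega}$ is already a genuine connection $1$-form for the $\RedPart$-action (recorded in the earlier remark), valued in $\redpart \cong \gstruc/\nilpart$; equivariance and reproduction of the fundamental vector fields ($\vec{A} \hook \red{\omega} = A$ for $A \in \redpart$) carry over to $E/\NilPart$ under the transport. The decisive point is its type: the lemma showing $\red{E} \to M$ is holomorphic fixes the complex structure on $\red{E}$ by declaring exactly $\semib{\omega}$ and $\red{\omega}$ to be $(1,0)$, and the remark guarantees that this complex structure is the one carried over from the holomorphic quotient $E/\NilPart$. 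Hence $\red{\omega}$ is a smooth $(1,0)$-connection on $E/\NilPart$, and its curvature in the principal-bundle sense is precisely $\red{\nabla}\red{\omega} = d\red{\omega} + \frac{1}{2}[\red{\omega},\red{\omega}]$.

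With these identifications the computation is immediate: Atiyah's formula \cite{Atiyah:1957}, that for a smooth $(1,0)$-connection $\theta$ one has $a(\theta) = (\nabla^{\theta}\theta)^{(1,1)}$, applies with $\theta = \red{\omega}$ and gives $a(E/\NilPart) = (\red{\nabla}\red{\omega})^{(1,1)}$. The phrase ``for any choice of reduction'' is then covered by Atiyah's theorem that $a(\theta)$ is independent of the chosen $(1,0)$-connection: every reduction $\red{E} \subset E$ yields such a connection on the one holomorphic bundle $E/\NilPart$ (the remark notes the complex structure varies biholomorphically with the reduction), so all of them represent the single class $a(E/\NilPart)$.

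I expect the main obstacle to be the compatibility of complex structures in the second step: one must confirm that the complex structure placed on $\red{E}$ by the coframing $(\semib{\omega}, \red{\omega})$ genuinely coincides with the holomorphic structure pulled back from $E/\NilPart$, so that $\red{\omega}$ qualifies as a $(1,0)$-connection on $E/\NilPart$ itself and not merely on an abstractly biholomorphic copy. Once that is secured, matching the principal-bundle curvature $\red{\nabla}\red{\omega}$ with Atiyah's $\nabla^{\theta}\theta$ and extracting the $(1,1)$-component is routine.
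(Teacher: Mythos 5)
Your proposal is correct and takes essentially the same approach as the paper: the paper's entire proof is a one-sentence appeal to Atiyah's theorem that the $(1,1)$-part of the curvature of any smooth $(1,0)$-connection on $E/\NilPart$ represents the Atiyah class, with the identification of $\red{E}$ with $E/\NilPart$ and of $\red{\omega}$ as a $(1,0)$-connection left implicit in the preceding lemma and remark. Your write-up simply makes those identifications explicit before invoking the same theorem of Atiyah.
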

\begin{proof} Atiyah \cite{Atiyah:1957} proves
that for any smooth connection of type $(1,0)$ on $E/\NilPart$, the
$(1,1)$-part of its curvature represents the Atiyah class.
\end{proof}

In this sense, we can think of $\red{\nabla} \red{\omega}$ as a kind
of Atiyah class on $E$, although we have to pullback to a reduction,
and then compute the $(1,1)$-part, before we can compute with it.

\subsection{The associated graded}
Define the filtration of any $\Gstruc$-representation $W$ by setting
$W^{\le k}=0$ if $k \le 0$ and otherwise set $W^{\le k}$ to be those
$w \in W$ for which $\nilpart w \in W^{\le k-1}$. Let $\gr W$ be the
associated graded. Hence $\gr W$ is an $\RedPart$-representation.
For $E \to M$ any Cartan geometry, clearly the characteristic
classes of $E \times_{\Gstruc} \gr W$ are the same as those of $E
\times_{\Gstruc} W$. If there is a closed subgroup $\NilPart \subset
\Gstruc$, corresponding to the Lie algebra decomposition
$\Gstruc=\RedPart \NilPart$, then clearly $E \times_{\Gstruc} \gr W
\to M$ is pulled back from $\left(E/\NilPart\right)
\times_{\RedPart} \gr W \to M$.

\section{Parabolic geometries}
\subsection{Parabolic subgroups of semisimple Lie groups}
If $\Gtot$ is a complex semisimple Lie group, and $\Gstruc \subset
\Gtot$ is a complex Lie subgroup containing a Borel subgroup,
$\Gstruc$ is called \emph{parabolic}. Wang \cite{Wang:1954} shows
that $\Gtot/\Gstruc$ is closed, and that $\Gtot$ contains a maximal
compact subgroup $\cpt{\Gtot} \subset \Gtot$ acting transitively on
$\Gtot/\Gstruc$, with $\cpt{\Gstruc} = \Gstruc \cap \cpt{\Gtot}$.
Moreover, every parabolic subgroup admits a decomposition $P=LAN$
into a product of a semisimple, an abelian, and a unipotent (see
Knapp \cite{Knapp:2002} p. 478). A complex Lie subgroup $\Gstruc
\subset \Gtot$ of a complex semisimple Lie group is parabolic just
when $\Gtot/\Gstruc$ is a rational projective variety.

Because $\Gstruc$ contains a Borel subgroup $\Borel$, it is easy to
see that the Lie algebras $\borel \subset \gstruc \subset \gtot$ of
$\Borel \subset \Gstruc \subset \Gtot$ are each a sum of root spaces
of $\gtot$. (For proof of the statements in this paragraph, see
Fulton \& Harris \cite{Fulton/Harris:1991}). We say that a root
$\alpha$ \emph{belongs} to $\gstruc$ if its root space belongs to
$\gstruc$, and otherwise we say that $\alpha$ is a root
\emph{omitted} from $\gstruc$. Moreover, the root spaces of roots
$\alpha$ of $\gtot$ for which both $\alpha$ and $-\alpha$ have root
spaces lying in $\gstruc$ form the Lie algebra $\mathfrak{l}$ of
$\SSPart$, while the root spaces of the other roots belonging to
$\gstruc$ form $\nilpart$.

Beyond the elementary facts just stated, all we need to know about
parabolic subalgebras is in the following lemma.
\begin{lemma}
The Killing form inner product
\(
\left<\delta, \beta\right>
\)
(where $\delta$ is half the sum of omitted roots, and $beta$
any root) vanishes
just precisely for $\beta$ a root of the
maximal semisimple subalgebra $\mathfrak{l} \subset \gstruc$.
\end{lemma}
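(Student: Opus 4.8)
The plan is to recognise $\delta$ as (the negative of) half the sum of the roots of the nilradical $\nilpart$, and then to use that this weight is orthogonal to precisely the roots of the Levi factor $\sspart$. First I would note that, since $\gstruc$ contains the Borel $\borel$, for every root $\alpha$ at least one of $\pm\alpha$ has its root space in $\gstruc$; the omitted roots are therefore exactly the negatives of the roots whose root spaces lie in $\nilpart$. Hence, writing $\rho_{\nilpart}=\frac12\sum_{\alpha\in\Phi(\nilpart)}\alpha$ for half the sum of the nilradical roots, we get $\delta=-\rho_{\nilpart}$, so it suffices to analyse $\langle\rho_{\nilpart},\beta\rangle$. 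Writing $\rho$ for half the sum of all positive roots and $\rho_{\sspart}$ for half the sum of the positive roots of $\sspart$, one has the standard splitting $\rho_{\nilpart}=\rho-\rho_{\sspart}$.

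For the easy direction, that every root $\beta$ of $\sspart$ is orthogonal to $\delta$, I would observe that $\nilpart$ is an $\sspart$-module, so the multiset of its weights is stable under the Weyl group $W_{\sspart}$; consequently $\rho_{\nilpart}$ is $W_{\sspart}$-invariant, hence fixed by every reflection $s_\beta$ with $\beta$ a root of $\sspart$, which forces $\langle\rho_{\nilpart},\beta^\vee\rangle=0$ and so $\langle\delta,\beta\rangle=0$. Equivalently, from $\rho_{\nilpart}=\rho-\rho_{\sspart}$ together with $\langle\rho,\alpha_i^\vee\rangle=\langle\rho_{\sspart},\alpha_i^\vee\rangle=1$ for each simple root $\alpha_i$ of $\sspart$, one sees directly that $\rho_{\nilpart}$ annihilates the simple coroots of $\sspart$, and therefore all of its roots.

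The harder direction is the converse: a root $\beta$ not belonging to $\sspart$ cannot be orthogonal to $\delta$. Here I would expand $\rho_{\nilpart}$ in the fundamental-weight basis. The previous paragraph gives $\langle\rho_{\nilpart},\alpha_i^\vee\rangle=0$ for the simple roots $\alpha_i$ of $\sspart$, so $\rho_{\nilpart}=\sum_{i\notin I}c_i\,\omega_i$, where $I$ indexes the simple roots of $\sspart$ and the $\omega_i$ are the fundamental weights. The crux is the positivity $c_i>0$: from $c_i=\langle\rho,\alpha_i^\vee\rangle-\langle\rho_{\sspart},\alpha_i^\vee\rangle=1-\langle\rho_{\sspart},\alpha_i^\vee\rangle$, and the fact that every positive root of $\sspart$ pairs non-positively with the coroot $\alpha_i^\vee$ of an omitted simple root (the off-diagonal Cartan integers are $\le 0$), we get $\langle\rho_{\sspart},\alpha_i^\vee\rangle\le 0$, hence $c_i\ge 1$. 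Finally, given a root $\beta\notin\sspart$, after replacing $\beta$ by $-\beta$ if needed I may take $\beta=\sum_k n_k\alpha_k$ with all $n_k\ge 0$ and $n_i>0$ for some omitted index $i\notin I$; using $\langle\omega_i,\alpha_k\rangle=\delta_{ik}\,\tfrac12\langle\alpha_k,\alpha_k\rangle$, we obtain $\langle\rho_{\nilpart},\beta\rangle=\sum_{i\notin I}c_i\,n_i\,\tfrac12\langle\alpha_i,\alpha_i\rangle$, a sum of nonnegative terms with at least one strictly positive, so it does not vanish. I expect the positivity claim $c_i>0$ to be the main obstacle, that is, verifying that $\delta$ has strictly positive inner product with each omitted simple root; once that is in hand, the orthogonal complement of $\delta$ meets the root system in exactly the span of the kept simple roots, which is the root system of $\sspart$, and the remainder is routine bookkeeping with the splitting $\rho=\rho_{\sspart}+\rho_{\nilpart}$ and the single-sign property of root coordinates.
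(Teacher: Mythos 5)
Your proof is correct, but it follows a genuinely different route from the paper's. The paper (following Knapp's Corollary 5.100) argues combinatorially with root strings: for a positive root $\beta$ and an omitted root $\gamma$ with $\left<\gamma,\beta\right> > 0$, the whole $\beta$-string from $\gamma$ down to $r_\beta\gamma$ consists of omitted roots, and complete strings cancel in pairs in the sum $\left<\delta,\beta\right>$; what survives is a sum of strictly negative terms, which is empty exactly when reflection in $\beta$ preserves the omitted roots (i.e.\ when $\beta$ is a root of $\mathfrak{l}$), and is nonempty otherwise (take $\gamma=-\beta$). You instead identify $\delta=-\rho_{\mathfrak{n}}=-(\rho-\rho_{\mathfrak{l}})$ and work in the fundamental-weight basis: Weyl-group invariance of the roots of $\mathfrak{n}$ under $W_{\mathfrak{l}}$ gives the vanishing direction, and the positivity $c_i\ge 1$ of the coefficients of $\rho_{\mathfrak{n}}$ on the omitted fundamental weights (via $c_i = 1-\left<\rho_{\mathfrak{l}},\alpha_i^\vee\right>$ and nonpositivity of off-diagonal Cartan integers) gives the nonvanishing direction. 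Both arguments implicitly use that the parabolic is standard, i.e.\ that $\mathfrak{l}$ is spanned by root spaces of roots supported on a subset $I$ of the simple roots, and both in fact give the sharper conclusion $\left<\delta,\beta\right> < 0$ for positive $\beta$ not in $\mathfrak{l}$. The paper's string-cancellation argument is more self-contained (no fundamental weights needed), while yours is the more structural one: exhibiting $\rho_{\mathfrak{n}}$ as a strictly positive combination of the omitted fundamental weights is a reusable fact (it is, for instance, the standard route to positivity statements about the anticanonical class of $G/P$), and the bookkeeping is arguably more transparent. The one point you flagged as the likely obstacle, $c_i>0$, is indeed the crux, and your derivation of it is sound.
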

\begin{proof} Knapp \cite{Knapp:2002} p. 330, corollary 5.100
gives a completely elementary proof. We give a proof along
the same lines, to keep our exposition self-contained.
Pick $\beta$ any positive root. If $\gamma$ is any
omitted root, and $\left<\gamma,\beta\right> > 0$,
then
\[
\gamma, \gamma-\beta, \gamma-2 \, \beta, \dots, \gamma-q \, \beta = r_{\beta} \gamma
\]
is a string of roots ending in the reflection $r_{\beta}$ of $\gamma$.
To start with, $\gamma$ already contains a positive
multiple of an omitted simple negative root. Equivalently, $\gamma$
has some negative multiple of a positive simple root $\alpha_1$,
for which $-\alpha_1$ is omitted. Subtracting the positive
root $\beta$ can only make the multiple of $\alpha_1$
larger negative. Therefore the entire string consists
of omitted roots.

If we have an entire $\beta$-string of omitted roots, for a positive
root $\beta$, clearly
\begin{align*}
\left<r_{\beta} \gamma,\beta\right>
&= -\left<r_{\beta} \gamma, r_{\beta} \beta\right>
\\
&=
-\left<\gamma,\beta\right>.
\end{align*}
Therefore $\left<\gamma,\beta\right>$ cancels with $\left<r_{\beta}
\gamma,\beta\right>$ in the sum $\left<\delta,\beta\right>$. Hence
the entire string cancels out of that sum.

It follows that
\begin{equation}\label{eqn:Sum}
\left<\delta,\beta\right>=
\sum_{\gamma} \left<\gamma,\beta\right>
\end{equation}
where the sum is over omitted roots $\gamma$
for which both $\left<\gamma,\beta\right> \le 0$
and for which the other end of the $\beta$-string
through $\gamma$ is not omitted.
Of course, the terms with  $\left<\gamma,\beta\right> = 0$
cancel out too, so the sum (\ref{eqn:Sum}) is over
omitted roots $\gamma$ for which both $\left<\gamma,\beta\right> < 0$
and for which the other end of the $\beta$-string
through $\gamma$ is not omitted. In particular,
the sum (\ref{eqn:Sum}) is a sum of negative terms.
But there might not be any terms.

If $\beta$ is a root of $\mathfrak{l}$, then clearly reflection in
$\beta$ preserves the roots belonging to the parabolic subgroup
$\gstruc$, and therefore preserves the omitted roots. So the omitted
roots will all lie in $\beta$-strings, and
$\left<\delta,\beta\right>=0$ for these roots. On the other hand, if
$\beta$ is not a root of $\mathfrak{l}$, then $\beta$ belongs either
to the roots of $\mathfrak{n}$, or lies among the negatives of roots
of $\mathfrak{n}$. We can assume that $\beta$ is positive, since we
only need to show that $\left<\delta,\beta\right> \ne 0$. Take
$\gamma=-\beta$, to see that the sum (\ref{eqn:Sum}) has at least
one negative term.
\end{proof}

\subsection{Characteristic forms and classes on parabolic geometries}
\begin{lemma}
Consider a parabolic subgroup $\Gstruc \subset \Gtot$ of a
semisimple complex Lie group. A relation between characteristic
classes holds on $\Gtot$ just when it holds on $\cpt{\Gtot}$, and
the same relation occurs between characteristic forms on $\Gtot$. If
$E \to M$ is a Cartan geometry modelled on $\Gtot/\Gstruc$ on a
closed K\"ahler manifold $M$, then $\left[P(\Gtot)\right] \to
\left[P(E)\right]$ is a ring morphism of the characteristic rings in
cohomology.
\end{lemma}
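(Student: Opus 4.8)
The plan is to push every assertion onto the maximal compact subgroup $\cpt{\Gtot}$, where the model is a compact homogeneous K\"ahler manifold and Hodge theory applies, and then to transport the resulting relations to an arbitrary Cartan geometry through Theorem~\ref{theorem:QuotientRing}.

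First I would identify the characteristic forms on $\Gtot$ with those on $\cpt{\Gtot}$ by restriction. Recall that $P(\Gtot)$ is $P$ applied to $\red{\nabla}\red{\omega}=d\red{\omega}+\frac{1}{2}\left[\red{\omega},\red{\omega}\right]$, a left-invariant holomorphic $(2\deg P,0)$-form built polynomially from the Maurer--Cartan form of $\Gtot$. Now $\cpt{\gtot}$ is a real form of $\gtot$, so $\gtot=\cpt{\gtot}\otimes_{\R{}}\C{}$, and $\cpt{\redpart}=\redpart\cap\cpt{\gtot}$ has $\cpt{\gtot}$-orthogonal complement $\mathfrak{m}$ whose complexification is exactly $\left(\gtot/\gstruc\right)\oplus\nilpart$. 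Hence the complex $\redpart$-projection of the Maurer--Cartan form restricts on $\cpt{\Gtot}$ to the $\cpt{\redpart}$-valued canonical invariant connection, and since restriction commutes with $d$ and the bracket we get $P(\cpt{\Gtot})=P(\Gtot)\big|_{\cpt{\Gtot}}$ (this is the projective example, where $\omega_i=-\omega^{\bari}$ on $\SU{n+1}$ turns the holomorphic $(2,0)$-expression into the Fubini--Study curvature). Because $\cpt{\Gtot}$ is a maximal totally real submanifold of the complex manifold $\Gtot$, a holomorphic form on $\Gtot$ vanishes identically precisely when its pullback to $\cpt{\Gtot}$ vanishes; therefore a relation $\sum_i P_i(\Gtot)=0$ holds as forms on $\Gtot$ if and only if $\sum_i P_i(\cpt{\Gtot})=0$ holds on $\cpt{\Gtot}$. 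This is the first assertion.

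Next I would pass from forms on $\cpt{\Gtot}$ to cohomology on the model. By Wang \cite{Wang:1954} the space $\Gtot/\Gstruc=\cpt{\Gtot}/\cpt{\Gstruc}$ is a compact homogeneous K\"ahler manifold, and $P(\cpt{\Gtot})$ descends to a closed $\cpt{\Gtot}$-invariant $(p,p)$-form representing the corresponding characteristic class. The key point is that on such a rational homogeneous variety the Chevalley--Eilenberg differential on invariant forms vanishes, so the invariant forms are harmonic for the invariant K\"ahler metric and the map sending an invariant closed form to its cohomology class is injective: an invariant form which is exact must be zero. Consequently $\sum_i\left[P_i(\cpt{\Gtot})\right]=0$ in $\Cohom{*}{\Gtot/\Gstruc}$ is equivalent to the identity $\sum_i P_i(\cpt{\Gtot})=0$ of forms, hence by the previous paragraph to $\sum_i P_i(\Gtot)=0$. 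Thus the relations among the characteristic classes of the model are exactly the relations among the characteristic forms $P(\Gtot)$.

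Finally, for a Cartan geometry $E\to M$ with $M$ closed K\"ahler, Theorem~\ref{theorem:QuotientRing} already shows that $P(\Gtot)=0$ forces $\left[P(E)\right]=0$, so $\left[P(\Gtot)\right]\mapsto\left[P(E)\right]$ is well defined; it is additive by construction and multiplicative because $P(E)\wedge Q(E)=(PQ)(E)$ is formed from the single curvature $\red{\nabla}\red{\omega}$ while wedging induces cup product on the $(p,p)$-classes supplied by Singer's proposition \cite{Singer:1959}. Hence it is a ring morphism of the characteristic rings in cohomology. The step needing the most care is the middle one: showing that the invariant characteristic forms on $\Gtot/\Gstruc$ carry no exact part. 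I would establish this by the standard Hodge argument for K\"ahler $C$-spaces---the Laplacian and Hodge star preserve the finite-dimensional space of invariant forms, averaging gives every class an invariant representative, and the diagonal concentration of the Hodge decomposition on a rational homogeneous variety forces that representative to be unique and harmonic.
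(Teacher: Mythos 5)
Your first step (holomorphic characteristic forms on $\Gtot$ vanish if and only if their restrictions to the maximal compact, totally real subgroup $\cpt{\Gtot}$ vanish) is sound and is implicitly the same identification the paper uses. But your middle step contains a fatal error, and it is exactly the point this lemma exists to overcome. You claim that on a rational homogeneous variety the differential vanishes on invariant forms, so that a closed invariant form which is exact must be zero, and hence that relations among characteristic \emph{classes} of the model coincide with relations among the invariant characteristic \emph{forms}. This is true for compact symmetric spaces but false for flag varieties in general. Concretely, on $\SU{3}/T$ the $T$-invariant $3$-forms on $\mathfrak{m}=\bigoplus_{\alpha>0}\left(\gtot_{\alpha}\oplus\gtot_{-\alpha}\right)_{\R{}}$ form a nonzero space (real and imaginary parts of $e^*_{\alpha}\wedge e^*_{\beta}\wedge \bar{e}^*_{\alpha+\beta}$), while $b_3=0$; since the invariant complex computes cohomology and there are no invariant $1$-forms, one checks that $d$ is nonzero on invariant $2$-forms and that a nonzero invariant $3$-form is exact. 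The paper itself stresses this phenomenon: in the discussion of table~\ref{longtable:ChernClassRelations} it states that the listed polynomials ``vanish in cohomology, but many of them do not vanish as invariant differential forms,'' and that ``all of the polynomials are exterior derivatives of invariant differential forms.'' So the equivalence you assert between form-level and cohomology-level relations on the model is simply false, and with it your reduction of the final step to Theorem~\ref{theorem:QuotientRing} collapses: that theorem only transfers relations holding as \emph{forms} on the model, whereas the lemma must transfer relations that hold \emph{only in cohomology} of the model.

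The paper's proof shows how to handle precisely this harder situation. Given $\left[P\left(\cpt{\Gtot}\right)\right]=0$, write $P\left(\cpt{\Gtot}\right)=d\xi$, average $\xi$ to make it left invariant and right $\cpt{(\RedPart)}$-invariant, and then extend $\xi$ complex-linearly from $\cpt{\gtot}$ to $\gtot$; by Zariski density the identity $P=d\xi$ persists as an identity of left invariant holomorphic forms on $\Gtot$. For a Cartan geometry $E \to M$ one then substitutes the Cartan connection, setting $\xi(E)=\xi\left(\omega,\dots,\omega\right)$, and checks that $P(E)-d\xi(E)$ consists only of curvature terms, which are semibasic $(2,0)$-terms and therefore cannot contribute to the $(p,p)$-part; on a closed K\"ahler manifold (via Singer's proposition) this forces $\left[P(E)\right]=0$. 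The essential idea your proposal is missing is this construction of an invariant primitive $\xi$ on the compact form and its holomorphic complexification — without it, you have no way to carry a cohomological relation of the model over to an arbitrary (curved) Cartan geometry.
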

\begin{proof}
Suppose that $\left[P\left(\cpt{\Gtot}\right)\right]=0$ in
cohomology, so that $P\left(\cpt{\Gtot}\right)=d\xi$ for some form
$\xi$ on $\cpt{\Gtot}.$ We can average to ensure that $\xi$ is left
invariant, and invariant under right $\cpt{(\RedPart)}$-action.
Extend $\xi$ from $\cpt{\gtot}$ to a complex linear form on $\gtot.$
By Zariski density, $\xi$ extends to a left invariant holomorphic
form on $\Gtot,$ invariant under right $\RedPart$-action, and
$P\left(\cpt{\Gtot}\right)=d\xi$ on $\Gtot.$

Define differential forms $P(E)$ and $\xi(E)$ by
setting
\begin{align*}
P(E) &= P\left(\red{\nabla}\red{\omega}\right),  \\
\xi(E) &= \xi\left(\omega,\omega,\dots, \omega\right),
\end{align*}
(where the $\xi$ on the right hand side is $\xi \in
\Lm{*}{\gtot}^*$). Differentiating we see that \(P(E)-d\xi(E)\)
vanishes modulo the curvature terms. We have see that the curvature
terms in $P(E)$ appear only outside the $(p,p)$-terms. The same is
true for
\[
d\xi(E)=
\xi(d\omega,\omega,\dots,\omega)-\xi(\omega,d\omega,\dots,\omega)+\dots \pm
\xi(\omega,\omega,\dots,d\omega),
\]
as the curvature terms arise as $(2,0)$-terms inside each $d \omega$
term. Consequently if $\left[P(G)\right]=0,$ then
$\left[P(E)\right]=0.$
\end{proof}
\begin{lemma}
Let $\Borel \subset \Gtot$ be the Borel subgroup of a semisimple Lie
group. The rational cohomology of the rational homogeneous variety
$\Gtot/\Borel$ is generated by Chern classes of $\Gtot$-equivariant
line bundles $\Gtot \times_{\Borel} W$, for various 1-dimensional
$\Borel$ modules $W$.
\end{lemma}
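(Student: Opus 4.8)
The plan is to realize $\Cohom{*}{\Gtot/\Borel;\mathbb{Q}}$ as the image of the Chern--Weil (characteristic) map of the torus bundle $\cpt{\Gtot}\to\Gtot/\Borel$, and then to identify its degree-two generators with the first Chern classes of the equivariant line bundles $\Gtot\times_{\Borel}\mathbb{C}_\lambda$.

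First I would pin down the one-dimensional $\Borel$-modules. Writing $\Borel=\Cartan\NilPart$ with $\NilPart$ the unipotent radical, any character of $\Borel$ kills $\NilPart=[\Borel,\Borel]$ and so factors through the torus $\Cartan$; hence the one-dimensional $\Borel$-modules are exactly the weight modules $\mathbb{C}_\lambda$ with $\lambda$ a character of $\Cartan$. Setting $L_\lambda=\Gtot\times_{\Borel}\mathbb{C}_\lambda$, the rule $\mathbb{C}_\lambda\otimes\mathbb{C}_\mu=\mathbb{C}_{\lambda+\mu}$ makes $\lambda\mapsto c_1(L_\lambda)$ additive, so it extends to a linear map $\cartan^*\to\Cohom{2}{\Gtot/\Borel;\mathbb{Q}}$. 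Under the Chern--Weil prescription of the preceding sections, computed on the compact form where $\cpt{\Borel}$ reduces to the maximal torus $\cpt{\Cartan}$, this is precisely the characteristic map --- every polynomial on the abelian $\cartan$ being invariant --- sending a linear form on $\cartan$ to its Chern--Weil class.

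Next I would verify surjectivity in degree two. Since $\Gtot/\Borel$ is a simply connected rational projective variety, $\Cohom{1}{\Gtot/\Borel;\mathcal{O}}=\Cohom{2}{\Gtot/\Borel;\mathcal{O}}=0$, so the exponential sequence identifies $\Cohom{2}{\Gtot/\Borel;\mathbb{Z}}$ with $\operatorname{Pic}(\Gtot/\Borel)$; the latter is the character lattice of $\Cartan$, spanned rationally by the fundamental weights $\omega_i$ (with $L_{\omega_i}$ the pullback of a hyperplane bundle under a minimal projective embedding). Thus the classes $c_1(L_\lambda)$ already span $\Cohom{2}{\Gtot/\Borel;\mathbb{Q}}$; concretely they are the Schubert divisor classes.

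The main step, which I expect to be the real obstacle, is that $\Cohom{*}{\Gtot/\Borel;\mathbb{Q}}$ is generated as a ring by its degree-two part. This is Borel's theorem: the characteristic map extends to a surjective ring homomorphism from $\operatorname{Sym}(\cartan^*)$, with $\cartan^*$ placed in degree two, onto $\Cohom{*}{\Gtot/\Borel;\mathbb{Q}}$, with kernel the ideal generated by the positive-degree Weyl-group invariants. Since the source is generated by $\cartan^*$ and the map is a surjective ring morphism carrying $\lambda$ to $c_1(L_\lambda)$, the whole cohomology ring is generated by these Chern classes. Should one prefer a self-contained argument, I would instead apply Leray--Hirsch along a Bott--Samelson tower presenting $\Gtot/\Borel$ as an iterated $\Proj{1}$-bundle, the new degree-two fibre class at each stage again being the first Chern class of a line bundle; in either route the entire content is the generation in degree two.
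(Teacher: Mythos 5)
Your main line of argument is correct, and in substance it is what the paper does: the paper's entire proof of this lemma is a citation to Bernstein--Gelfand--Gelfand, and the two lemmas of Borel that the paper states immediately afterwards (the presentation of $\Cohom{*}{\Gtot/\Borel,\mathbb{Q}}$ as $\operatorname{Sym}^*_{\mathbb{Q}}\left(\cartan_{\mathbb{Q}}^*\right)$ modulo the ideal of positive-degree Weyl invariants, together with the explicit map sending a weight to $c_1$ of the associated line bundle $\Gtot \times_{\Borel} \C{}$) are precisely the ingredients you assemble. Your identification of the one-dimensional $\Borel$-modules with characters of $\Cartan$ (any character kills $\nilpart = [\borel,\borel]$), the additivity of $\lambda \mapsto c_1\left(L_{\lambda}\right)$, and the reduction of the whole statement to Borel's degree-two generation theorem are all sound; the degree-two surjectivity check via the exponential sequence is correct, though it is already subsumed in Borel's theorem.

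One warning about the ``self-contained'' alternative you offer at the end: it is wrong as stated. $\Gtot/\Borel$ is not an iterated $\Proj{1}$-bundle in general; already for $\Gtot = \SL{3,\C{}}$ the flag variety is a $\Proj{1}$-bundle over $\Proj{2}$, and $\Proj{2}$ is not a $\Proj{1}$-bundle over anything. What is an iterated $\Proj{1}$-bundle is the Bott--Samelson variety of a reduced word for the longest Weyl group element, but that variety is only a birational resolution of $\Gtot/\Borel$. Leray--Hirsch gives degree-two generation of the cohomology of the Bott--Samelson variety, and the (injective) pullback does not transfer this generation to the subring $\Cohom{*}{\Gtot/\Borel,\mathbb{Q}}$, because the degree-two classes upstairs are not all pulled back from downstairs, so one would still need a pushforward argument that does not obviously close up. Since you offered this only as an optional variant, your proof stands on the Borel route, but the variant should be deleted or repaired.
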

\begin{proof}
See Bern{\v{s}}te{\u\i}n, Gel{\cprime}fand,  \&  Gel{\cprime}fand
\cite{Bernstein/Gelfand/Gelfand:1973a,Bernstein/Gelfand/Gelfand:1973b}.
\end{proof}
\begin{lemma}\label{lemma:lift}
Suppose that $\Gstruc_- \subset \Gstruc_+$ are two closed subgroups
of a Lie group $\Gtot$, so that we have a fiber bundle map
$\Gtot/\Gstruc_- \to \Gtot/\Gstruc_+$. Let $W$ be a
$\Gstruc_+$-module, and $E \to M$ a Cartan geometry modelled on
$\Gtot/\Gstruc_+$. Then $E \to E/\Gstruc_-$ is a Cartan geometry
modelled on $\Gtot/\Gstruc_-,$ called the \emph{lift} of the Cartan
geometry on $M$. The characteristic forms of $E \times_{\Gstruc_+}
W$ pull back to the characteristic forms of $E \times_{\Gstruc_-}
W$.
\end{lemma}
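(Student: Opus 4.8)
The plan is to prove the lemma in two stages: first verify that $(E \to E/\Gstruc_-, \omega)$ is genuinely a Cartan geometry modelled on $\Gtot/\Gstruc_-$, and then identify the two associated bundles as pullbacks of one another, so that the statement about characteristic forms reduces to naturality of the Chern--Weil construction. Throughout I would write $\pi \colon E/\Gstruc_- \to E/\Gstruc_+ = M$ for the fiber bundle map induced by the inclusion $\Gstruc_- \subset \Gstruc_+$.

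For the first stage I would simply recheck the three axioms in the definition for the \emph{same} $1$-form $\omega$, now regarded as a $\gtot$-valued form on the principal right $\Gstruc_-$-bundle $E \to E/\Gstruc_-$ (a principal bundle because $\Gstruc_-$ is closed). The coframing property, that $\omega$ identifies each $T_e E$ with $\gtot$, is untouched. Since $\gstruc_- \subset \gstruc_+$, every $A \in \gstruc_-$ has $\vec{A}$ generating the infinitesimal right $\Gstruc_+$-action, which restricts to the $\Gstruc_-$-action; and $r_g^* \omega = \Ad_g^{-1}\omega$ holds for all $g \in \Gstruc_+$, in particular for $g \in \Gstruc_-$. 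Hence the lift is a Cartan geometry. This stage is routine.

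For the second stage the key observation is a natural bundle isomorphism. The map $[e,w]_{\Gstruc_-} \mapsto [e,w]_{\Gstruc_+}$ is well defined because $W$ is a $\Gstruc_+$-module and $\Gstruc_- \subset \Gstruc_+$; it is a vector bundle morphism covering $\pi$ and a fiberwise isomorphism, so it exhibits $E \times_{\Gstruc_-} W \cong \pi^*\left(E \times_{\Gstruc_+} W\right)$, both bundles being holomorphic since $\pi$ is holomorphic. By the earlier lemma identifying $W \mapsto E \times_{\Gstruc} W$ with the characteristic-ring morphism, the characteristic forms of the Cartan geometry coincide with the Chern--Weil forms of these associated bundles. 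It then suffices to invoke naturality: equipping the pullback bundle with the pulled-back connection gives $P(\pi^*\Theta) = \pi^* P(\Theta)$ for the curvature $\Theta$ and every invariant polynomial $P$, so the characteristic forms of $E \times_{\Gstruc_-} W$ are the $\pi$-pullbacks of those of $E \times_{\Gstruc_+} W$.

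The step I expect to be the main obstacle is reconciling this naturality with the paper's \emph{definition} of characteristic forms through a choice of reduction $\red{E} \subset E$ and the reductive part $\red{\omega}$ of the Cartan connection: the two geometries a priori use different reductions $\RedPart_\pm \subset \Gstruc_\pm$ and different splittings of the single form $\omega$, so the connections built from $\red{\omega}_-$ and from $\red{\omega}_+$ are not obviously related. The clean resolution is to compute the characteristic forms of $E \times_{\Gstruc_-} W$ using precisely the $\pi$-pullback of the connection used downstairs, a legitimate choice by the reduction-independence lemma, after which the form-level pullback is immediate. When $\Gstruc_-$ and $\Gstruc_+$ are parabolic one can moreover arrange the reductive part of $\gstruc_-$ to lie in that of $\gstruc_+$, together with $\nilpart_+ \subset \nilpart_-$, which makes the compatibility of the two splittings of $\omega$ transparent; in general the cohomology-level statement, all that is used on the closed K\"ahler manifolds of interest, follows from the bundle identification and naturality alone.
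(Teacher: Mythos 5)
Your proposal is correct and follows essentially the same route as the paper: the paper's entire proof is the one-line observation that pulling back vector bundles pulls back characteristic classes, which is exactly your bundle identification $E \times_{\Gstruc_-} W \cong \pi^*\left(E \times_{\Gstruc_+} W\right)$ plus Chern--Weil naturality. Your additional verifications (that the lift satisfies the Cartan geometry axioms, and that the reduction-dependence of the paper's definition of characteristic forms is harmless by the reduction-independence lemma) are details the paper leaves implicit, not a different argument.
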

\begin{proof}
When we pull back vector bundles, characteristic classes pull back.
\end{proof}
\begin{lemma}
If $M$ is a closed K\"ahler manifold bearing a parabolic geometry $E
\to M$modelled on a rational homogeneous variety $\Gtot/\Gstruc,$
then the map $W \mapsto E \times_{\Gstruc} W$ on vector bundles
yields a unique ring morphism
\[
\Cohom{*}{\Gtot/\Gstruc,\Q{}} \to \Cohom{*}{M,\Q{}},
\]
sending Chern classes of $\Gtot \times_{\Gstruc} W \to E
\times_{\Gstruc} W,$ sending 1 to 1, and sending the Chern classes
of the tangent bundle of $\Gtot/\Gstruc$ to the Chern classes of the
tangent bundle of $M$.
\end{lemma}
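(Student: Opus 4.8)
The plan is to obtain the desired morphism as the descent of the ring morphism $\left[P(\Gtot)\right]\mapsto\left[P(E)\right]$ on characteristic rings established above, using the classical fact that for a rational homogeneous variety the entire rational cohomology is generated by characteristic classes. First I would record the two structural inputs. By the preceding lemma for parabolic geometries, the assignment $\left[P(\Gtot)\right]\mapsto\left[P(E)\right]$ is a well-defined ring morphism from the characteristic ring of the model into $\Cohom{*}{M,\Q{}}$. Next, the Chern classes of an equivariant bundle $\Gtot\times_{\Gstruc}W$ are exactly such characteristic forms, obtained by Chern--Weil from the connection $\red{\omega}$ with curvature $\red{\nabla}\,\red{\omega}$; thus $c_k\left(\Gtot\times_{\Gstruc}W\right)$ and $c_k\left(E\times_{\Gstruc}W\right)$ correspond under this morphism. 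Granting that $\Cohom{*}{\Gtot/\Gstruc,\Q{}}$ is generated as a ring by the classes $c_k\left(\Gtot\times_{\Gstruc}W\right)$, I would define $\Phi$ on generators by $c_k\left(\Gtot\times_{\Gstruc}W\right)\mapsto c_k\left(E\times_{\Gstruc}W\right)$ and extend multiplicatively.

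The point to verify is well-definedness: every polynomial relation among the generating Chern classes holding in $\Cohom{*}{\Gtot/\Gstruc,\Q{}}$ must be sent to a relation holding in $\Cohom{*}{M,\Q{}}$. But any such relation has the form $\left[Q(\Gtot)\right]=0$ for an $\RedPart$-invariant polynomial $Q$ assembled from the Chern--Weil expressions of the bundles involved, and the ring morphism of the preceding lemma forces $\left[Q(E)\right]=0$. Hence $\Phi$ respects all relations, is a ring morphism, and agrees with $\left[P(\Gtot)\right]\mapsto\left[P(E)\right]$ on characteristic forms. Sending $1$ to $1$ is immediate from the degree-zero component, and since the earlier identifications give $T\left(\Gtot/\Gstruc\right)=\Gtot\times_{\Gstruc}\left(\gtot/\gstruc\right)$ and $TM=E\times_{\Gstruc}\left(\gtot/\gstruc\right)$, the morphism carries $c_k\left(T(\Gtot/\Gstruc)\right)$ to $c_k(TM)$. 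Uniqueness is then automatic, since generation leaves no freedom once the values on the Chern classes of the bundles $\Gtot\times_{\Gstruc}W$ are prescribed.

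The hard part will be the generation statement, that $\Cohom{*}{\Gtot/\Gstruc,\Q{}}$ is spanned by products of Chern classes of the bundles $\Gtot\times_{\Gstruc}W$. I would reduce this to the Borel case: by lemma~\vref{lemma:lift} the geometry $E\to M$ lifts to a geometry $E\to E/\Borel$ modelled on $\Gtot/\Borel$, and the Bern{\v{s}}te{\u\i}n--Gel{\cprime}fand--Gel{\cprime}fand lemma above asserts that $\Cohom{*}{\Gtot/\Borel,\Q{}}$ is generated by Chern classes of equivariant line bundles. The fibration $\Gtot/\Borel\to\Gtot/\Gstruc$ with fiber $\Gstruc/\Borel$ makes the pullback $\Cohom{*}{\Gtot/\Gstruc,\Q{}}\to\Cohom{*}{\Gtot/\Borel,\Q{}}$ injective, with image the classes invariant under the Weyl group of the Levi factor $\RedPart$; such invariant polynomials in the line-bundle Chern classes are exactly the Chern classes, and products thereof, of the bundles $\Gtot\times_{\Gstruc}W$, by the splitting principle together with the Chern--Weil correspondence between $\RedPart$-invariant polynomials and characteristic classes. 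Identifying this image cleanly, and checking that the Weyl-invariant combinations genuinely arise from honest equivariant bundles rather than merely as formal symmetric functions, is where the real work lies; the underlying cohomological computations are classical.
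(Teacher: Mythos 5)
Your proposal is correct and takes essentially the same route as the paper's own proof: lift to the Borel case via lemma~\ref{lemma:lift}, use the generation of $\Cohom{*}{\Gtot/\Borel,\Q{}}$ by Chern classes of homogeneous line bundles, and obtain well-definedness from multiplicativity of the total Chern class together with the characteristic-ring morphism $\left[P(\Gtot)\right]\mapsto\left[P(E)\right]$ established earlier. The paper compresses all of this into two sentences, while you additionally spell out the descent from the Borel level back to $\Gtot/\Gstruc$ via Weyl invariance and flag the one technical point (realizing Weyl-invariant combinations by honest equivariant bundles) that the paper leaves implicit.
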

\begin{proof}
We make lift to assume that $\Gstruc=\Borel.$ We can then restrict
attention to sums of line bundles, for which multiplicativity of the
total Chern class ensures a ring homomorphism.
\end{proof}
\begin{remark}
Note that the Chern classes live in integer cohomology. By mapping
the homogeneous vector bundles, not just Chern forms, we can try to
map the integer cohomology classes of the model to those of the
manifold, assuming the manifold $M$ is closed and K\"ahler and the
model $\Gtot/\Gstruc$ is a rational homogeneous variety. It seems
likely that the map on integer cohomology classes gives a ring
morphism $\Cohom{*}{\Gtot/\Gstruc,\Z{}} \to \Cohom{*}{M,\Z{}}.$
\end{remark}
\begin{remark} All of the nonzero cohomology
classes of $\Gtot/\Gstruc$ are $(p,p)$-classes,
so that the Hodge diamond is zero outside the middle
column.
\end{remark}

\subsection{Cohomology rings of some rational homogeneous varieties}
Baston and Eastwood \cite{Baston/Eastwood:1989} explain how to
calculate the integer cohomology rings of rational homogeneous
varieties using the Weyl group, in terms of Hasse diagrams. The
calculations are extraordinarily complicated, roughly due to the
enormous size of the Weyl group. We do not need to know the
cohomology rings of the various $\Gtot/\Gstruc$ in order to
calculate relations among characteristic classes. Therefore (just
for illustration) we will only calculate the rational cohomology
rings, and only for $\Gtot/\Gstruc$ with $\Gtot$ a rank 2 simple
group. Keep in mind that these spaces have no torsion in their
integer cohomology groups, so that there is little information lost
by working with rational coefficients. It won't be necessary for our
purposes to find the integer cohomology lattice inside the rational
cohomology.

Recall that the Weyl group of $\Gtot/\Gstruc$
means the subset of the Weyl group of $\Gtot$
leaving $\Gstruc$ invariant. This is precisely
the Weyl group of the semisimple part
of $\SSPart \subset \Gstruc$ (where $\Gstruc=\RedPart \NilPart$),
the subgroup generated by reflections in the
roots of $\SSPart$.
\begin{lemma}[Borel \cite{Borel:1953}]
The rational cohomology ring of the flag variety
$\Gtot/\Borel$ is the quotient
\[
\Cohom{*}{\Gtot/\Borel,\mathbb{Q}}
=
\operatorname{Sym}^*_{\mathbb{Q}}\left(\cartan_{\mathbb{Q}}^*\right)
/
\left(\operatorname{Sym}^+_{\mathbb{Q}}\left(\cartan_{\mathbb{Q}}^*\right)%
^{W(\Gtot)}\right).
\]
\end{lemma}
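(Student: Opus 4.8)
The plan is to pass to the compact picture and read the cohomology off the fibration that builds the classifying space of a maximal torus. Since $\cpt{\Gtot}$ acts transitively on $\Gtot/\Borel$ with stabilizer the maximal torus $T := \cpt{\Cartan} = \cpt{\Gtot} \cap \Borel$, there is a diffeomorphism $\Gtot/\Borel \cong K/T$, where $K := \cpt{\Gtot}$. First I would recall the two standard computations of $\Q{}$-cohomology of classifying spaces: because $\mathrm{B}T$ is a product of $\dim T$ copies of $\CP{\infty}$, one has $\Cohom{*}{\mathrm{B}T,\Q{}} = \operatorname{Sym}^*_{\mathbb{Q}}(\cartan^*_{\mathbb{Q}})$ with the degree-$2$ generators $\cartan^*_{\mathbb{Q}}$; and, by Borel's theorem on classifying spaces together with the Chevalley restriction theorem, $\Cohom{*}{\mathrm{B}K,\Q{}} = \operatorname{Sym}^*_{\mathbb{Q}}(\cartan^*_{\mathbb{Q}})^{W(\Gtot)}$, the restriction map to the torus being injective with image the $W(\Gtot)$-invariants.

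Next I would exploit the Borel fibration
\[
K/T \hookrightarrow \mathrm{B}T \longrightarrow \mathrm{B}K,
\]
obtained by dividing a contractible free $K$-space first by $T$ and then by $K$. Applied to this pullback square, the Eilenberg--Moore spectral sequence reads
\[
\Tor^{*,*}_{\Cohom{*}{\mathrm{B}K,\Q{}}}\left(\Cohom{*}{\mathrm{B}T,\Q{}},\Q{}\right)
\Longrightarrow
\Cohom{*}{K/T,\Q{}}.
\]
The crucial input is Chevalley's theorem that $\operatorname{Sym}^*_{\mathbb{Q}}(\cartan^*_{\mathbb{Q}})$ is a free module of rank $\left|W(\Gtot)\right|$ over its subring of $W(\Gtot)$-invariants. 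Freeness forces all higher $\Tor$ to vanish, so the spectral sequence collapses onto its $\Tor^0$ column and yields
\[
\Cohom{*}{K/T,\Q{}}
=
\Cohom{*}{\mathrm{B}T,\Q{}}\otimes_{\Cohom{*}{\mathrm{B}K,\Q{}}}\Q{}
=
\operatorname{Sym}^*_{\mathbb{Q}}\left(\cartan^*_{\mathbb{Q}}\right)\big/\left(\operatorname{Sym}^+_{\mathbb{Q}}\left(\cartan^*_{\mathbb{Q}}\right)^{W(\Gtot)}\right),
\]
which is exactly the asserted quotient. Equivalently, I could invoke Leray--Hirsch: the same freeness collapses the Serre spectral sequence of the fibration, making the fibre restriction $\Cohom{*}{\mathrm{B}T,\Q{}} \to \Cohom{*}{K/T,\Q{}}$ surjective with kernel the ideal generated by the positive-degree invariants.

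It is worth recording the geometric meaning of the resulting map, which is the one actually used in this paper: a weight $\lambda \in \cartan^*_{\mathbb{Q}}$ is sent to $c_1$ of the $\Gtot$-equivariant line bundle $\Gtot \times_{\Borel} \C{}_{\lambda}$, and the recipe extends multiplicatively. That the positive-degree $W(\Gtot)$-invariants lie in the kernel can be seen directly: an honest $K$-representation yields a topologically trivial bundle on $K/T$, whose total Chern class is therefore $1$, and that total Chern class is computed from the symmetric---hence $W(\Gtot)$-invariant---functions of the $T$-weights. As an independent check of surjectivity and of the numerics, the Bruhat decomposition presents $K/T$ by Schubert cells indexed by $w \in W(\Gtot)$, one in each real dimension $2\ell(w)$, so $\Cohom{*}{K/T,\Q{}}$ has Poincar\'e polynomial $\sum_{w \in W(\Gtot)} t^{2\ell(w)}$, matching that of the coinvariant algebra $\prod_i \left(1 + t^2 + \dots + t^{2(d_i-1)}\right)$, where the $d_i$ are the fundamental degrees.

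The main obstacle is algebraic rather than geometric: everything hinges on Chevalley's structure theorem for the invariants of the reflection group $W(\Gtot)$, both in the guise $\Cohom{*}{\mathrm{B}K,\Q{}} = \Cohom{*}{\mathrm{B}T,\Q{}}^{W(\Gtot)}$ and, more essentially, in the freeness of $\operatorname{Sym}^*_{\mathbb{Q}}(\cartan^*_{\mathbb{Q}})$ over its invariants, since it is exactly this freeness that collapses the spectral sequence. The remaining ingredients---transitivity of $K$ on the flag variety, the identification of the fibration, and convergence of the Eilenberg--Moore sequence (guaranteed by $\mathrm{B}K$ being simply connected and of finite type)---are routine.
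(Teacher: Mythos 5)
Your proof is correct, but be aware that the paper itself contains no proof of this lemma to compare against: the lemma is stated purely as a citation of Borel's 1953 paper, followed only by a description of the explicit ring morphism, which sends a weight $\gamma$ to the first Chern class $\frac{\sqrt{-1}}{2\pi}\,d\gamma$ of the associated line bundle $\Gtot \times_{\Borel} \C{}$, computed from a left-invariant connection form. So your argument supplies exactly what the paper delegates to the reference, and it does so along what is essentially Borel's own route: identify $\Gtot/\Borel$ with $K/T$, run the fibration $K/T \hookrightarrow \mathrm{B}T \to \mathrm{B}K$ through a spectral sequence, and let Chevalley's freeness theorem force the collapse; the only modernization is your use of Eilenberg--Moore (which postdates 1953) in place of Borel's Leray spectral sequence arguments. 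Two fine points. Your aside that you could ``equivalently invoke Leray--Hirsch'' is stated backwards: the Leray--Hirsch hypothesis is surjectivity of the fiber restriction $\Cohom{*}{\mathrm{B}T,\mathbb{Q}} \to \Cohom{*}{K/T,\mathbb{Q}}$, which is essentially the conclusion you are after, so it is the Eilenberg--Moore collapse (or, alternatively, the dimension count against the Bruhat decomposition that you include as a cross-check) that actually carries the weight. And your closing identification of the abstract isomorphism with $\lambda \mapsto c_1\left(\Gtot \times_{\Borel} \C{}\right)$ matches the description the paper gives immediately after the lemma, which is the form in which the lemma is used later; that part is worth keeping.
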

The numerator is the algebra of rational coefficient polynomials on
the rational vector space generated by the coroots. The denominator
is the ideal generated by Weyl group invariant polynomials of
positive degree. The explicit map taking a polynomial to a
cohomology class is as follows: take an element $\gamma$ of
$\cartan_{\mathbb{Z}}^*$, i.e. a weight, thought of as a 1-form on
the Cartan subgroup $\Cartan$. Any Lie algebra homomorphism $\gamma
: \borel \to \C{}$ must vanish on the nilpotent part of $\borel$, so
is determined uniquely by $\gamma : \cartan \to \C{}$. Therefore
$\gamma : \cartan \to \C{}$ determines a unique homomorphism $\gamma
: \Borel \to \C{\times},$ and so determines a holomorphic line
bundle $\Gtot \times_{\Borel} \C{}$ on $\Gtot/\Borel.$ The local
holomorphic sections of this line bundle are functions $f$ taking
open sets of $\Gtot$ to $\C{},$ so that $r_p^*f = \gamma(p)^{-1}f$,
where $r_p$ denotes right action by $p \in \Gstruc.$ We can extend
$\gamma$ to a 1-form on $\gtot$ by using the Killing form to split
apart $\gtot = \borel \oplus \borel^{\perp}$, and setting $\gamma=0$
on $\borel^{\perp}.$ We then extend $\gamma$ to a 1-form on $\Gtot$
by left invariance, making $\gamma$ a connection for this line
bundle. Taking the first Chern class:
\[
c_1(\gamma) = \frac{\sqrt{-1}}{2 \pi} d \gamma,
\]
we have associated a cohomology class to each weight. This extends
uniquely to a ring morphism.

\begin{lemma}[Borel  \cite{Borel:1953}]
The cohomology ring of $\Gtot/\Gstruc$ sits inside the cohomology ring
of $\Gtot/\Borel$, as the
set of elements invariant under the Weyl group
of $\Gtot/\Gstruc$ (i.e. under the Weyl group
of $L$, writing $\Gstruc=LAN$):
\[
\Cohom{*}{\Gtot/\Gstruc,\mathbb{Q}}=
\Cohom{*}{\Gtot/\Borel,\mathbb{Q}}^{W(\Gtot/\Gstruc)}.
\]
\end{lemma}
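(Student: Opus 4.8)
The plan is to deduce the statement from the Borel presentation of $\Cohom{*}{\Gtot/\Borel,\Q{}}$ just established, using only that passage to invariants under a finite group is exact in characteristic zero. Write $S=\operatorname{Sym}^*_{\Q{}}\left(\cartan_{\Q{}}^*\right)$ and let $I \subset S$ be the ideal generated by the positive-degree $W(\Gtot)$-invariants, so that the preceding lemma reads $\Cohom{*}{\Gtot/\Borel,\Q{}} = S/I$. The group $W(\Gtot)$ acts on $\cartan_{\Q{}}^*$, hence on $S$, and $I$ is visibly $W(\Gtot)$-stable since it is generated by invariants; therefore $W(\Gtot)$, and in particular its subgroup $W(\Gtot/\Gstruc)=W(\SSPart)$, acts on the quotient $S/I$ by graded ring automorphisms. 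This is the action appearing in the statement.

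Next I would apply the Reynolds operator $R=\frac{1}{|W(\SSPart)|}\sum_{w \in W(\SSPart)} w$. Because $W(\SSPart)$ is finite and we work over $\Q{}$, the functor of $W(\SSPart)$-invariants is exact, so the short exact sequence $0 \to I \to S \to S/I \to 0$ gives $\left(S/I\right)^{W(\SSPart)} = S^{W(\SSPart)}/I^{W(\SSPart)}$. The next step is the identity
\[
I^{W(\SSPart)} = S^{W(\SSPart)} \cdot S_+^{W(\Gtot)},
\]
the ideal of $S^{W(\SSPart)}$ generated by the positive-degree $W(\Gtot)$-invariants. The inclusion $\supseteq$ is clear; for $\subseteq$, write an invariant element of $I$ as $\sum_i f_i g_i$ with $g_i \in S_+^{W(\Gtot)} \subseteq S^{W(\SSPart)}$ and apply $R$: since each $g_i$ is already $W(\SSPart)$-fixed, $R\left(\sum_i f_i g_i\right)=\sum_i R(f_i)\,g_i$ lies in $S^{W(\SSPart)} \cdot S_+^{W(\Gtot)}$, and equals the original element by invariance. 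Hence
\[
\Cohom{*}{\Gtot/\Borel,\Q{}}^{W(\SSPart)} = S^{W(\SSPart)}/\bigl(S^{W(\SSPart)} \cdot S_+^{W(\Gtot)}\bigr).
\]

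It then remains to identify this quotient with $\Cohom{*}{\Gtot/\Gstruc,\Q{}}$, compatibly with the pullback $\pi^*$ along $\pi : \Gtot/\Borel \to \Gtot/\Gstruc$. For this I would use the fibration with fibre $\Gstruc/\Borel = \SSPart/B_{\SSPart}$, the full flag variety of $\SSPart$ (here $B_{\SSPart}$ is a Borel subgroup of $\SSPart$), whose cohomology $\Cohom{*}{\Gstruc/\Borel,\Q{}}$ is the $W(\SSPart)$-coinvariant algebra. Since all three spaces have cohomology in even degrees only (previous remark), the Leray spectral sequence degenerates and Leray--Hirsch applies; the classes attached to the $W(\SSPart)$-invariant weights give a characteristic map $c_{\Gstruc}: S^{W(\SSPart)} \to \Cohom{*}{\Gtot/\Gstruc,\Q{}}$ compatible with $c_{\Borel}: S \to \Cohom{*}{\Gtot/\Borel,\Q{}}$ via $\pi^* c_{\Gstruc} = c_{\Borel}|_{S^{W(\SSPart)}}$. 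Combined with a Poincar\'e-series count this yields the partial-flag Borel presentation $\Cohom{*}{\Gtot/\Gstruc,\Q{}} \cong S^{W(\SSPart)}/\bigl(S^{W(\SSPart)} S_+^{W(\Gtot)}\bigr)$, and hence, with the displayed computation, identifies $\Cohom{*}{\Gtot/\Gstruc,\Q{}}$ with $\Cohom{*}{\Gtot/\Borel,\Q{}}^{W(\SSPart)}$ as rings via $\pi^*$.

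I expect this last identification to be the main obstacle, since everything before it is formal. Its delicate points are the surjectivity of the characteristic map $c_{\Gstruc}$ (equivalently, that $\Cohom{*}{\Gtot/\Gstruc,\Q{}}$ is generated by characteristic classes of the homogeneous bundles, a generation that is \emph{not} in degree two) and the matching of Poincar\'e series, which rests on $S$ being free over $S^{W(\Gtot)}$ and on $|W(\Gtot)|/|W(\SSPart)|$ enumerating the Schubert cells of $\Gtot/\Gstruc$. Once $c_{\Gstruc}$ is known to be surjective, the identification of its kernel with $S^{W(\SSPart)} S_+^{W(\Gtot)}$ follows from the degree count, completing the lemma.
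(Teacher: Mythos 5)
The paper contains no proof of this lemma for you to be compared against: exactly as with the preceding lemma (the presentation of $\Cohom{*}{\Gtot/\Borel,\mathbb{Q}}$ as the coinvariant algebra), the statement is simply quoted from Borel \cite{Borel:1953}, and the citation is the entire proof. So your proposal is necessarily a different route, and the only question is whether it is complete. Its first half is: the ideal $I$ is $W(\Gtot)$-stable because it is generated by invariants, taking invariants under the finite group $W(\SSPart)$ is exact over $\mathbb{Q}$, and your Reynolds-operator argument for $I^{W(\SSPart)}=S^{W(\SSPart)}\cdot S^{W(\Gtot)}_+$ is correct; this cleanly gives
\[
\Cohom{*}{\Gtot/\Borel,\mathbb{Q}}^{W(\SSPart)}\cong
S^{W(\SSPart)}/\bigl(S^{W(\SSPart)}\cdot S^{W(\Gtot)}_+\bigr).
\]

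The genuine gap is in the remaining step, which is where all the depth lies, and it is not merely the surjectivity you flag: as written, your characteristic map $c_{\Gstruc}$ does not exist. The $W(\SSPart)$-invariant weights span only the degree-one part of $S^{W(\SSPart)}$, and $S^{W(\SSPart)}$ is in general not generated in degree one, so ``the classes attached to the $W(\SSPart)$-invariant weights'' cannot define a map on all of $S^{W(\SSPart)}$. The paper's own $\Proj{2}$ example exhibits this: with $r_\beta(\alpha)=\alpha+\beta$ and $r_\beta(\beta)=-\beta$, one finds $S^{W(\SSPart)}=\mathbb{Q}\left[2\alpha+\beta,\ \beta^2\right]$, which needs a generator in degree two, while the invariant weights are only the multiples of $2\alpha+\beta$. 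The repair, which also lets you bypass proving surjectivity of $c_{\Gstruc}$ altogether, is to use higher-rank homogeneous bundles: for any $\Gstruc$-module $V$, the bundle $\Gtot\times_{\Borel}V$ is filtered by weight line bundles, so
\[
c_{\Borel}\bigl(e_j(\text{weights of }V)\bigr)
=c_j\bigl(\Gtot\times_{\Borel}V\bigr)
=\pi^*c_j\bigl(\Gtot\times_{\Gstruc}V\bigr)\in\operatorname{im}\pi^*,
\]
and over $\mathbb{Q}$ these elementary symmetric functions of weights of $\Gstruc$-modules span $S^{W(\SSPart)}$ (every invariant is an average of powers of weights, lattice weights are Zariski dense, and the weight multiset of an irreducible $\Gstruc$-module is a single Weyl orbit plus dominance-lower orbits, so a triangularity argument passes from orbit sums to honest modules). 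Combined with your first half, this yields $\Cohom{*}{\Gtot/\Borel,\mathbb{Q}}^{W(\SSPart)}=c_{\Borel}\bigl(S^{W(\SSPart)}\bigr)\subseteq\operatorname{im}\pi^*$. Since $\pi^*$ is injective (Leray--Hirsch, as you say), $\dim_{\mathbb{Q}}\Cohom{*}{\Gtot/\Gstruc,\mathbb{Q}}$ equals the number of Bruhat cells, $|W(\Gtot)|/|W(\SSPart)|$, while Chevalley freeness of $S^{W(\SSPart)}$ over $S^{W(\Gtot)}$ gives the same dimension for the invariant subring; the inclusion is therefore an equality, and $\pi^*$ is the identification asserted in the lemma.
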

\begin{example}[The flag variety $\SL{3,\C{}}/B$]
If $\Gtot=\SL{3,\C{}}$, then we can label the two simple roots as
$\alpha=e_1-e_2$ and $\beta=e_2-e_3$, and then the reflections in
those roots are
\begin{alignat*}{2}
r_{\alpha}(\alpha)&=-\alpha, & \qquad r_{\alpha}(\beta)&=\alpha+\beta, \\
r_{\beta}(\beta)&=-\beta, & \qquad r_{\beta}(\alpha)&=\alpha+\beta. \\
\end{alignat*}
The cohomology ring is the quotient of the polynomial ring by the
ideal generated by the polynomials
\begin{align*}
& \alpha^2 + \alpha \beta + \beta^2, \\
& \left(\alpha-\beta\right)\left(2 \, \alpha + \beta\right)
\left(2 \, \beta + \alpha\right).
\end{align*}
Reduction via a Gr\"obner basis shows that this cohomology ring is
the one given in table~\vref{longtbl:Cohom}.

The Chern classes of the tangent bundle are computed out
by splitting the tangent bundle into a sum of line
bundles, one corresponding to each negative root.
So
\begin{align*}
c(T) &= \left(1-\alpha\right)\left(1-\beta\right)\left(1-\left(\alpha+\beta\right)\right) \\
&=
1 - 2 \left(\alpha+\beta\right) + 2 \alpha \beta + \beta^3
\end{align*}
(modulo the polynomials above).
Therefore
\begin{align*}
c_1 &= - 2 \left(\alpha+\beta\right) \\
c_2 &= 2 \alpha \beta  \\
c_3 &= \beta^3.
\end{align*}
\end{example}
\begin{example}[The projective plane]
For $\Gtot/\Gstruc=\Proj{2}$, with Dynkin diagram $\xymatrix{\pout
\ar@{-}[r] & \pin}$, (up to reordering of simple roots) $\beta$ is a
root of $\SSPart=\SL{2,\C{}}$. Moreover, $\pm \beta$ are the only
roots of $\SSPart$. The cohomology ring of $\Proj{2}$ is given by
the subring of $\Cohom{*}{G/B,\Q{}}$ invariant under reflection in
$\beta$ (see table~\vref{longtbl:Cohom}). It turns out that $\left(2
\, \alpha+\beta \right)^3$ lies in the ideal. This simple example
shows clearly how complicated the calculations get, even for the
most elementary semisimple groups and parabolic subgroups. To find
the Chern classes of the tangent bundle, we cannot split it into
line bundles, but we can lift it to a vector bundle on
$\Gtot/\Borel$, and then it splits into line bundles. We leave the
calculation to the reader.
\end{example}

\begin{example}[The varieties $\SO{5,\C{}}/\Gstruc=%
B_2/\Gstruc=C_2/\Gstruc=\Symp{4,\C{}}/\Gstruc$] As we see in
table~\vref{longtbl:Cohom}, the reflections in the two simple roots
of $B_2$ are
\begin{alignat*}{2}
r_{\alpha}(\alpha)&=-\alpha, & \qquad r_{\alpha}(\beta)&=2 \, \alpha+\beta, \\
r_{\beta}(\beta)&=-\beta, & \qquad r_{\beta}(\alpha)&=\alpha+\beta. \\
\end{alignat*}
The ideal generated by polynomials invariant under these reflections is
generated by the invariant polynomials
\begin{align*}
&\beta^2+2 \, \alpha \, \beta+2 \, \alpha^2, \\
&\beta^2 \, \left(2 \, \alpha + \beta\right)^2, \\
&\alpha^2 \, \left(\alpha+\beta\right)^2.
\end{align*}
The cohomology rings of the various $B_2/\Gstruc$ are given in
table~\vref{longtbl:Cohom}.
\end{example}

\def\latticebody{\drop{}}

\setlongtables
\begin{longtable}[c]{cccc}
\caption{Cohomologies of $G/P$ for $G$ simple of rank 2. The odd degree cohomology groups vanish.}%
\label{longtbl:Cohom} \\
\toprule
\head{Group} &
\head{Dynkin} &
\head{Root} &
\head{Cohomology} \\
& \head{diagram} & \head{lattice} \\
\midrule
\endfirsthead
\multicolumn{4}{l}{Table:~\ref{longtbl:Cohom} Cohomologies of $G/P$ for $G$ simple of rank 2 (continued)} \\ \\
\toprule
\head{Group} &
\head{Dynkin} &
\head{Root} &
\head{Cohomology} \\
& \head{diagram} & \head{lattice} \\
\midrule
\endhead
\bottomrule
\multicolumn{4}{l}{\small\sl{continues on next page}}
\endfoot
\bottomrule
\endlastfoot
$A_2$
&
{
\centering{
\xy
 0;/r.10pc/: 
  (0,0)*+{}="0";
  (0,0)*+{\pout}="1";
  (20,0)*+{\pout}="3";
  "1"; "3" **\dir{-};
\endxy
}
}
&
\xy +(2,2)="o",0*\xybox{%
0;<.75pc,0mm>:<0.375pc,.65pc>::,{"o"
\croplattice{-4}4{-4}4{-2.6}{2.6}{-3}3}
,"o"+(0,0)="a"*{\pin}*+!D{}
,"o"+(0,0)="b"*{\bigcirc}*+!L{} 
,"o"+(-2,1)="aa"*{\pout}*+!D{}
,"o"+(-1,-1)="bb"*{\pout}*+!L{} 
,"o"+(1,1)="A"*{\pin}*+!D{}
,"o"+(2,-1)="B"*{\pin}*+!L{}
,"o"+(2,-1)="BL"*{\quad \alpha}*+!L{}
,"o"+(-1,2)="C"*{\pin}*+!L{}
,"o"+(-1,2)="CL"*{\quad \beta}*+!L{}
,"o"+(1,-2)="D"*{\pout}*+!L{}
,"o"+(0,-1)="c","o"+(-1,1)="d"
}
\endxy
&
$
\begin{array}{cc}
\head{Degree} & \head{Basis} \\
\midrule
2 & \alpha, \beta \\
4 & \alpha \beta, \beta^2 \\
6 & \beta^3 \\
\end{array}
$
\\ \addlinespace[3pt]
$A_2$
&
{
\centering{
\xy
 0;/r.10pc/: 
  (0,0)*+{}="0";
  (0,0)*+{\pout}="1";
  (20,0)*+{\pin}="3";
  "1"; "3" **\dir{-};
\endxy
}
}
&
\xy +(2,2)="o",0*\xybox{%
0;<.75pc,0mm>:<0.375pc,.65pc>::,{"o"
\croplattice{-4}4{-4}4{-2.6}{2.6}{-3}3}
,"o"+(0,0)="a"*{\pin}*+!D{}
,"o"+(0,0)="b"*{\bigcirc}*+!L{} 
,"o"+(-2,1)="aa"*{\pout}*+!D{}
,"o"+(-1,-1)="bb"*{\pout}*+!L{} 
,"o"+(1,1)="A"*{\pin}*+!D{}
,"o"+(2,-1)="B"*{\pin}*+!L{}
,"o"+(2,-1)="BL"*{\quad \alpha}*+!L{}
,"o"+(-1,2)="C"*{\pin}*+!L{}
,"o"+(-1,2)="CL"*{\quad \beta}*+!L{}
,"o"+(1,-2)="D"*{\pin}*+!L{}
,"o"+(0,-1)="c","o"+(-1,1)="d"
}
\endxy
&
$
\begin{array}{cc}
\head{Degree} & \head{Basis} \\
\midrule
2 & 2 \, \alpha + \beta \\
4 & \left(2 \, \alpha + \beta\right)^2 \\
\end{array}
$
\\
\addlinespace[3pt]
$B_2=C_2$
&
\begin{xy}
  0;/r.20pc/: 
  (0,0)*+{\pout}="1";
  (5,0)*+{>}="2";
  (10,0)*+{\pout}="3";
  "1"; "3" **\dir{=};
  {\ar@{=}; "1";"3"};
  \end{xy}
&
\xy +(2,2)="o",0*\xybox{%
0;<1.5pc,0mm>:<0mm,1.5pc>::,{"o"
\croplattice{-1}1{-1}1{-1}{1}{-1}1}
,"o"+(0,0)="a"*{\pin}*+!D{}
,"o"+(0,0)="b"*{\bigcirc}*+!L{} 
,"o"+(1,0)="A"*{\pin}*+!D{}
,"o"+(1,0)="AL"*{\quad \quad \alpha}*+!D{}
,"o"+(0,1)="B"*{\pin}*+!L{}
,"o"+(1,1)="C"*{\pin}*+!L{}
,"o"+(-1,1)="D"*{\pin}*+!L{}
,"o"+(-1,1.5)="DL"*{\beta}*+!L{}
,"o"+(-1,0)="Ax"*{\pout}*+!D{}
,"o"+(0,-1)="Bx"*{\pout}*+!L{}
,"o"+(-1,-1)="Cx"*{\pout}*+!L{}
,"o"+(1,-1)="D"*{\pout}*+!L{}
}
\endxy
&
$
\begin{array}{cc}
\head{Degree} & \head{Basis} \\
\midrule
2 & \alpha, \beta \\
4 & \alpha \beta, \beta^2 \\
6 & \alpha \beta^2, \beta^3 \\
8 & \alpha \beta^3
\end{array}
$
\\ \addlinespace[3pt]
$B_2=C_2$
&
\begin{xy}
  0;/r.20pc/: 
  (0,0)*+{\pin}="1";
  (5,0)*+{>}="2";
  (10,0)*+{\pout}="3";
  "1"; "3" **\dir{=};
  {\ar@{=}; "1";"3"};
  \end{xy}
&
\xy +(2,2)="o",0*\xybox{%
0;<1.5pc,0mm>:<0mm,1.5pc>::,{"o"
\croplattice{-1}1{-1}1{-1}{1}{-1}1}
,"o"+(0,0)="a"*{\pin}*+!D{}
,"o"+(0,0)="b"*{\bigcirc}*+!L{} 
,"o"+(1,0)="A"*{\pin}*+!D{}
,"o"+(1,0)="AL"*{\quad \quad \alpha}*+!D{}
,"o"+(0,1)="B"*{\pin}*+!L{}
,"o"+(1,1)="C"*{\pin}*+!L{}
,"o"+(-1,1)="D"*{\pin}*+!L{}
,"o"+(-1,1.5)="DL"*{\beta}*+!L{}
,"o"+(-1,0)="Ax"*{\pin}*+!D{}
,"o"+(0,-1)="Bx"*{\pout}*+!L{}
,"o"+(-1,-1)="Cx"*{\pout}*+!L{}
,"o"+(1,-1)="Dx"*{\pout}*+!L{}
}
\endxy
&
$
\begin{array}{cc}
\head{Degree} & \head{Basis} \\
\midrule
2 & \alpha+\beta \\
4 & \alpha^2 \\
6 & \alpha^2 \left(\alpha+\beta\right) \\
\end{array}
$
\\ \addlinespace[3pt]
$B_2=C_2$
&
\begin{xy}
  0;/r.20pc/: 
  (0,0)*+{\pout}="1";
  (5,0)*+{>}="2";
  (10,0)*+{\pin}="3";
  "1"; "3" **\dir{=};
  {\ar@{=}; "1";"3"};
  \end{xy}
&
\xy +(2,2)="o",0*\xybox{%
0;<1.5pc,0mm>:<0mm,1.5pc>::,{"o"
\croplattice{-1}1{-1}1{-1}{1}{-1}1}
,"o"+(0,0)="a"*{\pin}*+!D{}
,"o"+(0,0)="b"*{\bigcirc}*+!L{} 
,"o"+(1,0)="A"*{\pin}*+!D{}
,"o"+(1,0)="AL"*{\quad \quad \alpha}*+!D{}
,"o"+(0,1)="B"*{\pin}*+!L{}
,"o"+(1,1)="C"*{\pin}*+!L{}
,"o"+(-1,1)="D"*{\pin}*+!L{}
,"o"+(-1,1.5)="DL"*{\beta}*+!L{}
,"o"+(-1,0)="Ax"*{\pout}*+!D{}
,"o"+(0,-1)="Bx"*{\pout}*+!L{}
,"o"+(-1,-1)="Cx"*{\pout}*+!L{}
,"o"+(1,-1)="Dx"*{\pin}*+!L{}
}
\endxy
&
$
\begin{array}{cc}
\head{Degree} & \head{Basis} \\
\midrule
2 & 2 \, \alpha + \beta \\
4 & \beta^2 \\
6 & \beta^2  \left(2 \, \alpha+ \beta\right) \\
\end{array}
$
\\ \addlinespace[3pt]
$G_2$
&
\begin{xy}
 0;/r.10pc/: 
  (0,0)*+{\pout}="1";
  (10,0)*+{<}="2";
  (20,0)*+{\pout}="3";
  "1"; "3" **\dir3{-};
\end{xy}
&
\xy +(2,2)="o",0*\xybox{%
0;<1.5pc,0mm>:<-2.25pc,1.3pc>::,{"o"
\croplattice{-3}3{-2}2{-2.6}{2.6}{-3}3}
,"o"+(1,0)="a"*{\pin}*+!D{}
,"o"+(1.4,0)="aL"*{\alpha}*+!D{}
,"o"+(0,0)="b"*{\bigcirc}*+!L{}
,"o"+(0,0)="c"*{\pin}*+!L{}
,"o"+(0,1)="d"*{\pin}*+!D{}
,"o"+(0,1.2)="dL"*{\beta}*+!D{}
,"o"+(1,1)="B"*{\pin}*+!L{}
,"o"+(2,1)="C"*{\pin}*+!L{}
,"o"+(3,1)="D"*{\pin}*+!L{}
,"o"+(3,2)="E"*{\pin}*+!L{}
,"o"+(-1,0)="na"*{\pout}*+!D{}
,"o"+(0,-1)="nd"*{\pout}*+!D{}
,"o"+(-1,-1)="nB"*{\pout}*+!L{}
,"o"+(-2,-1)="nC"*{\pout}*+!L{}
,"o"+(-3,-1)="nD"*{\pout}*+!L{}
,"o"+(-3,-2)="nE"*{\pout}*+!L{}
}
\endxy
&
$
\begin{array}{cc}
\head{Degree} & \head{Basis} \\
\midrule
2 & \alpha, \beta \\
4 & \alpha \beta, \beta^2 \\
6 & \alpha \beta^2, \beta^3 \\
8 & \alpha \beta^3, \beta^4 \\
10 & \alpha \beta^4, \beta^5 \\
12 & \alpha \beta^5
\end{array}
$
\\ \addlinespace[3pt]
$G_2$
&
\xy
 0;/r.10pc/: 
  (0,0)*+{\pin}="1";
  (10,0)*+{<}="2";
  (20,0)*+{\pout}="3";
  "1"; "3" **\dir3{-};
\endxy
&
\xy +(2,2)="o",0*\xybox{%
0;<1.5pc,0mm>:<-2.25pc,1.3pc>::,{"o"
\croplattice{-3}3{-2}2{-2.6}{2.6}{-3}3}
,"o"+(1,0)="a"*{\pin}*+!D{}
,"o"+(1.4,0)="aL"*{\alpha}*+!D{}
,"o"+(0,0)="b"*{\bigcirc}*+!L{}
,"o"+(0,0)="c"*{\pin}*+!L{}
,"o"+(0,1)="d"*{\pin}*+!D{}
,"o"+(0,1.2)="dL"*{\beta}*+!D{}
,"o"+(1,1)="B"*{\pin}*+!L{}
,"o"+(2,1)="C"*{\pin}*+!L{}
,"o"+(3,1)="D"*{\pin}*+!L{}
,"o"+(3,2)="E"*{\pin}*+!L{}
,"o"+(-1,0)="na"*{\pin}*+!D{}
,"o"+(0,-1)="nd"*{\pout}*+!D{}
,"o"+(-1,-1)="nB"*{\pout}*+!L{}
,"o"+(-2,-1)="nC"*{\pout}*+!L{}
,"o"+(-3,-1)="nD"*{\pout}*+!L{}
,"o"+(-3,-2)="nE"*{\pout}*+!L{}
}
\endxy
&
$
\begin{array}{cc}
\head{Degree} & \head{Basis} \\
\midrule
2 & 3 \, \alpha+2 \, \beta \\
4 & \alpha^2 \\
6 & \alpha^2 \left(3 \, \alpha+2 \, \beta\right) \\
8 & \alpha^4 \\
10 & \alpha^4 \left(3 \, \alpha + 2 \, \beta \right) \\
\end{array}
$
\\ \addlinespace[3pt]
$G_2$
&
\xy
 0;/r.10pc/: 
  (0,0)*+{\pout}="1";
  (10,0)*+{<}="2";
  (20,0)*+{\pin}="3";
  "1"; "3" **\dir3{-};
\endxy
&
\xy +(2,2)="o",0*\xybox{%
0;<1.5pc,0mm>:<-2.25pc,1.3pc>::,{"o"
\croplattice{-3}3{-2}2{-2.6}{2.6}{-3}3}
,"o"+(1,0)="a"*{\pin}*+!D{}
,"o"+(1.4,0)="aL"*{\alpha}*+!D{}
,"o"+(0,0)="b"*{\bigcirc}*+!L{}
,"o"+(0,0)="c"*{\pin}*+!L{}
,"o"+(0,1)="d"*{\pin}*+!D{}
,"o"+(0,1.2)="dL"*{\beta}*+!D{}
,"o"+(1,1)="B"*{\pin}*+!L{}
,"o"+(2,1)="C"*{\pin}*+!L{}
,"o"+(3,1)="D"*{\pin}*+!L{}
,"o"+(3,2)="E"*{\pin}*+!L{}
,"o"+(-1,0)="na"*{\pout}*+!D{}
,"o"+(0,-1)="nd"*{\pin}*+!D{}
,"o"+(-1,-1)="nB"*{\pout}*+!L{}
,"o"+(-2,-1)="nC"*{\pout}*+!L{}
,"o"+(-3,-1)="nD"*{\pout}*+!L{}
,"o"+(-3,-2)="nE"*{\pout}*+!L{}
}
\endxy
&
$
\begin{array}{cc}
\head{Degree} & \head{Basis} \\
\midrule
2 & 2 \, \alpha + \beta \\
4 & \beta^2 \\
6 & \beta^2 \left(2 \, \alpha+\beta\right) \\
8 & \beta^4 \\
10 & \beta^4 \left(2 \, \alpha+ \beta \right)
\end{array}
$
\\
\end{longtable}

\begin{example}[$G_2/P$]
The roots of $G_2$ are drawn in table~\ref{longtbl:Cohom}.
The reflections in the two simple roots are
\begin{alignat*}{2}
r_{\alpha}\left(\alpha\right) &= -\alpha,
& \qquad r_{\alpha}\left(\beta\right)&=3\alpha + \beta, \\
r_{\beta}\left(\alpha\right) &= \alpha+\beta,
& \qquad r_{\beta}\left(\beta\right)&=-\beta.
\end{alignat*}
The ideal of Weyl invariant positive degree polynomials
is generated by the invariant polynomials:
\begin{align*}
& 3 \, \alpha^2 + 3 \, \alpha \beta + \beta^2, \\
& \beta^2 \left( 3 \, \alpha + 2 \, \beta \right)^2 \left(\beta + 3 \, \alpha \right)^2
\end{align*}
\end{example}

\subsection{Relations on characteristic classes of rational homogeneous varieties}

\begin{lemma}
Let $\Gtot$ be a complex semisimple Lie group with Borel subgroup
$\Borel \subset \Gtot$. If $W$ is a $\Borel$-module, then the
characteristic forms of the vector bundle $\Gtot \times_{\Borel} W$
are identical to those of $\Gtot \times_{\Borel} \gr W$. Moreover,
$\Gtot \times_{\Borel} \gr W$ is a sum of homogeneous line bundles,
given by the direct sum
\[
\gr W = \bigoplus_{\lambda} W_{\lambda},
\]
with $\lambda$ a weight, and $W_{\lambda}$ the associated weight
space. The total Chern class of $W$ is therefore
\[
c(W) = \prod_{\lambda} \left(1 + \frac{\sqrt{-1}}{2 \pi} d \lambda\right)^{\dim W_{\lambda}},
\]
where we treat each weight $\lambda \in \cartan^* \subset \gtot^*$
as a 1-form on $\Gtot$ before taking exterior derivative.
\end{lemma}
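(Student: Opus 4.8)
The plan is to reduce everything to the weight space decomposition of a torus module, exploiting that for $\Gstruc=\Borel$ the reductive part is just the maximal torus $\RedPart=\Cartan$, so that the connection $\red{\omega}$ used to compute characteristic forms is valued in $\cartan$ and $\nilpart$ is the nilradical. First I would note that, since $\borel$ is solvable, Lie's theorem forces $\nilpart$ to act by nilpotent endomorphisms on any finite-dimensional $\Borel$-module $W$; hence the filtration $W^{\le k}$ defined in the discussion of the associated graded above exhausts $W$, and $\dim \gr W=\dim W$. Moreover each $W^{\le k}$ is a $\Cartan$-submodule: if $w\in W^{\le k}$ and $h\in\Cartan$, then $\nilpart(hw)=h\,(\Ad_{h^{-1}}\nilpart)\,w=h\,\nilpart\,w$ because $\Cartan$ normalizes $\nilpart$, so $hw\in W^{\le k}$ by induction. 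Since $\Cartan$ is a torus, this filtration by $\Cartan$-submodules splits, yielding an isomorphism $W\cong\gr W$ of $\Cartan$-modules.

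This $\Cartan$-module isomorphism is the crux, and I expect it to carry the whole argument; it is also the step requiring the most care, since I need equality of \emph{forms} and not merely of classes. The characteristic forms of $\Gtot\times_{\Borel}W$ on the model are computed from the reduction $\red{E}\subset\Gtot$ to structure group $\Cartan$, using the connection $\red{\omega}$ (the $\cartan$-component of the Maurer--Cartan form) and its curvature $\red{\nabla}\red{\omega}$; pulled back to $\red{E}$, the bundle is $\red{E}\times_{\Cartan}(W|_{\Cartan})$ with the connection induced by $\red{\omega}$, so the Chern forms depend only on the restriction of $W$ to $\Cartan$. Because $W$ and $\gr W$ restrict to isomorphic $\Cartan$-modules, the induced curvature endomorphisms are conjugate and every invariant polynomial produces the same differential form. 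This gives the first assertion, upgrading the earlier statement about classes to one about forms.

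For the remaining assertions, $\gr W$ is a $\Cartan$-module on which $\NilPart$ acts trivially, since $\nilpart$ lowers filtration degree and hence acts as zero on the graded pieces; decomposing the torus module into weight spaces gives $\gr W=\bigoplus_{\lambda}W_{\lambda}$. Each weight $\lambda\in\cartan^{*}$, extended to a character $\Borel\to\C{\times}$ with $\NilPart$ in its kernel, yields the homogeneous line bundle $\Gtot\times_{\Borel}\C{}_{\lambda}$, so $\Gtot\times_{\Borel}\gr W$ is a direct sum of $\dim W_{\lambda}$ copies of this line bundle over each $\lambda$. I would then invoke the Whitney product formula together with the first Chern form $c_{1}\bigl(\Gtot\times_{\Borel}\C{}_{\lambda}\bigr)=\frac{\sqrt{-1}}{2\pi}\,d\lambda$ established in the Borel lemma above, where $\lambda$ is regarded as a left-invariant $1$-form on $\Gtot$ via $\cartan^{*}\subset\gtot^{*}$. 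The only subtlety here is the compatibility of this extension with the curvature computation: on $\red{E}$ the connection is $\cartan$-valued, so only the $\cartan$-part of $\lambda$ enters, and the abelianness of $\Cartan$ kills the bracket in $\red{\nabla}\red{\omega}$, leaving exactly $\frac{\sqrt{-1}}{2\pi}\,d\lambda$. Combining this with $c(W)=c(\gr W)$ from the first assertion yields
\[
c(W)=\prod_{\lambda}\left(1+\frac{\sqrt{-1}}{2\pi}\,d\lambda\right)^{\dim W_{\lambda}}.
\]
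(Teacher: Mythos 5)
Your proof is correct, but note that the paper offers no proof of this lemma at all: it is stated bare, resting implicitly on two earlier ingredients, namely the subsection on the associated graded (where the equality of characteristic classes of $E \times_{\Gstruc} W$ and $E \times_{\Gstruc} \gr W$ is asserted with a ``clearly'') and the discussion following Borel's lemma (where a weight is extended by zero to a left-invariant $1$-form on $\Gtot$, made into a connection on the associated line bundle, with first Chern form $\frac{\sqrt{-1}}{2\pi}\,d\gamma$). Your argument supplies precisely the missing details along that intended route: that $\nilpart = \left[\borel,\borel\right]$ acts nilpotently by Lie's theorem, so the filtration exhausts $W$; that the filtration is $\Cartan$-stable and splits $\Cartan$-equivariantly since torus modules are semisimple, giving $W \cong \gr W$ as $\Cartan$-modules --- this is the real content behind the paper's ``clearly,'' and you correctly observe that it yields equality of \emph{forms}, not merely classes, because the Chern forms computed from $\red{\nabla}\red{\omega}$ on the reduction depend only on the restriction of $W$ to $\RedPart = \Cartan$ up to conjugation; and that abelianness of $\cartan$ reduces the curvature to $d\red{\omega}$, so that composing with a weight gives exactly $\frac{\sqrt{-1}}{2\pi}\,d\lambda$ with $\lambda$ viewed as a left-invariant $1$-form via $\cartan^* \subset \gtot^*$. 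In short, you have written the proof the paper omitted, in the way the paper's surrounding machinery suggests it should be written.
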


\subsection{Applications to parabolic geometries}

Table~\vref{longtable:ChernClassRelations} contains all rational
homogeneous varieties $\Gtot/\Gstruc$ of dimension at most 7 with
$\Gtot$ simple. I have contacted several experts on the cohomology
of rational homogeneous varieties, who inform me that the relations
among Chern classes of the tangent bundles of rational homogeneous
varieties are not known. Being unable to find a simple expression
for these relations, I offer the reader this long table to provide
the only known relations. Beside each Dynkin diagram are the ranks
of the filtration of $\gtot/\gstruc$ so that the tangent bundle of
the rational homogeneous variety is invariantly filtered by
homogeneous vector subbundles of those ranks. The final column
presents lists of polynomials in Chern classes. These polynomials
vanish. The notation is somewhat confusing, but has certain
advantages. The quantities $c_1, c_2, \dots, c_n$ are the Chern
classes of the tangent bundle of $\Gtot/\Gstruc$. For each
$\Gtot/\Gstruc$, we have found a weight $\varepsilon$ of $\gtot$,
invariant under the Weyl group of the maximal semisimple subgroup $L
\subset \Gstruc$. Therefore there is a line bundle on
$\Gtot/\Gstruc$ associated to this weight. We have expressed $c_1$
as a multiple of $\varepsilon$, i.e. as a multiple of the first
Chern class of the associated line bundle. However, if
$c_1=\varepsilon$, then we have omitted this equation. The relations
between Chern classes are far simpler when expressed in terms of
this $\varepsilon$.

The relations on Chern classes listed might not generate
all such relations. The pattern in these relations is still
unclear. The polynomial expressions
vanish in cohomology, but many of them do not vanish as
invariant differential forms, i.e.
the Chern
\emph{forms} often do \emph{not} satisfy these polynomials; only
the Chern \emph{classes} do.  All of the polynomials are
exterior derivatives of invariant differential forms.
For many $\Gtot/\Gstruc,$ all Chern classes
can be expressed in terms of $\varepsilon$ (and therefore
in terms of $c_1$). However, in the case of
\begin{xy}
  0;/r.10pc/: 
  (0,0)*+{\pout}="1";
  (22,0)*+{\pout}="2";
  (11,0)*+{>}="edge";
  {\ar@{=}; "1";"2"};
\end{xy}, $c_3$ is \emph{not} a polynomial function of
$c_1$ (or of $c_1$and $c_2$) in the cohomology ring. So in general,
we cannot always express Chern classes of rational
homogeneous varieties in terms of the first Chern class.

Kobayashi \cite{Kobayashi/Horst:1983} proved that
$n$-dimensional closed K\"ahler manifolds with conformal geometries satisfy
\[
c_p = a_p \left(\frac{c_1}{n}\right)^p, \ p=2,3,\dots,n,
\]
for some integers $a_p$ which remain unknown.
We have seen above similar relations for projective
connections. These are the only relations known outside
of our table.
It seems likely that a complete description of the
relations among Chern classes of rational homogeneous
varieties will soon be discovered. Until these relations
are known, applications of these results will be
restricted to low dimensions. There is as yet no
way to see if a compact K\"ahler manifold
has a holomorphic parabolic
geometry, other than to write one down. It is possible
that the relations on characteristic classes are
necessary and sufficient conditions.

{\tiny{\setlongtables
\begin{longtable}[c]{cccc}
  \caption{Chern class relations of some rational homogeneous varieties.
           These are the varieties $G/P$ of dimension up to 7 with $G$ simple.
           The dimension of the variety is the last integer in the grading.
           Notation for rational homogeneous varieties: $\Fl{p,q,\dots}{r}$
           is the space of partial flags of dimensions $p,q,\dots$ in $\mathbb{C}^r$,
           $\LagFl{p,q,\dots}{2r}$ the analogous space of partial subLagrangian flags,
           $\NullFl{p,q,\dots}{r}$ the analogous space of partial null flags.}
  \label{longtable:ChernClassRelations} \\
  \toprule
  \head{Dynkin Diagram} &
  \head{Model} &
  \head{Grading} &
  \head{Chern Class Relations} \\
  \midrule
  \endfirsthead
  \multicolumn{4}{l}{Table \ref{longtable:ChernClassRelations}: Chern class relations of some rational homogeneous varieties (continued)} \\ \\
  \toprule
  \head{Dynkin Diagram} &
  \head{Model} &
  \head{Grading} &
  \head{Chern Class Relations} \\
  \midrule
  \endhead
  \bottomrule
  \multicolumn{4}{l}{\small \sl{continues on next page}}
  \endfoot
  \bottomrule
  \endlastfoot
  \begin{xy}
  0;/r.10pc/: 
  (0,7)*+{\text{Projective connection on a curve}};
  (0,0)*+{\pout}="1";
\end{xy}
  &
  $\Proj{1}$ &
  1  &
  $\begin{array}{c}
c_{1}-2\,\varepsilon \\
\end{array}
$  \\ \addlinespace[10pt]
  \begin{xy}
  0;/r.10pc/: 
  (11,7)*+{\text{Projective connection on a surface}};
  (0,0)*+{\pout}="1";
  (22,0)*+{\pin}="2";
  "1"; "2" **\dir{-};
\end{xy}
  &
  $\Proj{2}$ &
  2  &
  $\begin{array}{c}
c_{1}-3\,\varepsilon \\
c_{2}-3\,\varepsilon^2 \\
\end{array}
$  \\ \addlinespace[10pt]
  \begin{xy}
  0;/r.10pc/: 
  (11,7)*+{\text{Scalar 2nd order ODE}};
  (0,0)*+{\pout}="1";
  (22,0)*+{\pout}="2";
  "1"; "2" **\dir{-};
\end{xy}
  &
  $\Proj{}T\Proj{2}$ &
  2,3  &
  $\begin{array}{c}
c_{1}-2\,\varepsilon \\
c_{2}-2\,\varepsilon^2 \\
c_{3}-\varepsilon^3 \\
\end{array}
$  \\ \addlinespace[10pt]
  \begin{xy}
  0;/r.10pc/: 
  (11,7)*+{\text{Contact path geometry on 3-fold}};
  (0,0)*+{\pin}="1";
  (22,0)*+{\pout}="2";
  (11,0)*+{>}="edge";
  {\ar@{=}; "1";"2"};
\end{xy}
  &
  $\LagFl{1}{4}$&
  2,3  &
  $\begin{array}{c}
c_{1}-4\,\varepsilon \\
c_{2}-6\,\varepsilon^2 \\
c_{3}-4\,\varepsilon^3 \\
\end{array}
$  \\ \addlinespace[10pt]
  \begin{xy}
  0;/r.10pc/: 
  (11,7)*+{\text{Conformal geometry on 3-fold}};
  (0,0)*+{\pout}="1";
  (22,0)*+{\pin}="2";
  (11,0)*+{>}="edge";
  {\ar@{=}; "1";"2"};
\end{xy}
  &
  $Q^3$ &
  3  &
  $\begin{array}{c}
c_{1}-3\,\varepsilon \\
c_{2}-4\,\varepsilon^2 \\
c_{3}-2\,\varepsilon^3 \\
\end{array}
$  \\ \addlinespace[10pt]
  \begin{xy}
  0;/r.10pc/: 
  (22,7)*+{\text{Projective connection on 3-fold}};
  (0,0)*+{\pout}="1";
  (22,0)*+{\pin}="2";
  (44,0)*+{\pin}="3";
  "1"; "2" **\dir{-};
  "2"; "3" **\dir{-};
\end{xy}
  &
  $\Proj{3}$ &
  3  &
  $\begin{array}{c}
c_{1}-4\,\varepsilon \\
c_{2}-6\,\varepsilon^2 \\
c_{3}-4\,\varepsilon^3 \\
\end{array}
$  \\ \addlinespace[10pt]
  \begin{xy}
  0;/r.10pc/: 
  (11,7)*+{\text{Scalar 3rd order ODE}};
  (0,0)*+{\pout}="1";
  (22,0)*+{\pout}="2";
  (11,0)*+{>}="edge";
  {\ar@{=}; "1";"2"};
\end{xy}
  &
  $\LagFl{1,2}{4}$ &
  2,3,4  &
  $\begin{array}{c}
c_{1}-2\,\varepsilon \\
c_{2}-2\,\varepsilon^2 \\
6\,c_{3}\,\varepsilon-7\,\varepsilon^4 \\
3\,c_{4}-\varepsilon^4 \\
\end{array}
$  \\ \addlinespace[10pt]
  \begin{xy}
  0;/r.10pc/: 
  (22,7)*+{\text{Grassmann geometry on 4-fold}};
  (0,0)*+{\pin}="1";
  (22,0)*+{\pout}="2";
  (44,0)*+{\pin}="3";
  "1"; "2" **\dir{-};
  "2"; "3" **\dir{-};
\end{xy}
  &
  $\Gr{2}{4}$ &
  4  &
  $\begin{array}{c}
c_{1}-4\,\varepsilon \\
c_{2}-7\,\varepsilon^2 \\
c_{3}-6\,\varepsilon^3 \\
c_{4}-3\,\varepsilon^4 \\
\end{array}
$  \\ \addlinespace[10pt]
  \begin{xy}
  0;/r.10pc/: 
  (33,7)*+{\text{Projective connection on 4-fold}};
  (0,0)*+{\pout}="1";
  (22,0)*+{\pin}="2";
  (44,0)*+{\pin}="3";
  (66,0)*+{\pin}="4";
  "1"; "2" **\dir{-};
  "2"; "3" **\dir{-};
  "3"; "4" **\dir{-};
\end{xy}
  &
  $\Proj{4}$ &
  4  &
  $\begin{array}{c}
c_{1}-5\,\varepsilon \\
c_{2}-10\,\varepsilon^2 \\
c_{3}-10\,\varepsilon^3 \\
c_{4}-5\,\varepsilon^4 \\
\end{array}
$  \\ \addlinespace[10pt]
  \begin{xy}
  0;/r.10pc/: 
  (0,0)*+{\pin}="1";
  (22,0)*+{\pout}="2";
  (11,0)*+{<}="edge";
  "1"; "2" **\dir3{-};
\end{xy}
  &
  $Q^5$ &
  4,5  &
  $\begin{array}{c}
c_{1}-3\,\varepsilon \\
3\,c_{2}-13\,\varepsilon^2 \\
3\,c_{3}-11\,\varepsilon^3 \\
3\,c_{4}-5\,\varepsilon^4 \\
3\,c_{5}-\varepsilon^5 \\
\end{array}
$  \\ \addlinespace[10pt]
  \begin{xy}
  0;/r.10pc/: 
  (11,7)*+{\text{Nondegenerate 2-plane field on 5-fold}};
  (0,0)*+{\pout}="1";
  (22,0)*+{\pin}="2";
  (11,0)*+{<}="edge";
  "1"; "2" **\dir3{-};
\end{xy}
  &
  &
  2,3,5  &
  $\begin{array}{c}
c_{1}-5\,\varepsilon \\
c_{2}-11\,\varepsilon^2 \\
c_{3}-13\,\varepsilon^3 \\
c_{4}-9\,\varepsilon^4 \\
c_{5}-3\,\varepsilon^5 \\
\end{array}
$  \\ \addlinespace[10pt]
  \begin{xy}
  0;/r.10pc/: 
  (0,0)*+{\pout}="1";
  (22,0)*+{\pin}="2";
  (44,0)*+{\pout}="3";
  "1"; "2" **\dir{-};
  "2"; "3" **\dir{-};
\end{xy}
  &
  $\Fl{1,3}{4}$&
  4,5  &
  $\begin{array}{c}
c_{1}-3\,\varepsilon \\
5\,c_{2}\,\varepsilon^2-21\,\varepsilon^4 \\
c_{3}-2\,c_{2}\,\varepsilon+5\,\varepsilon^3 \\
5\,c_{4}-9\,\varepsilon^4 \\
5\,c_{5}-3\,\varepsilon^5 \\
5\,c_{2}^2-89\,\varepsilon^4 \\
\end{array}
$  \\ \addlinespace[10pt]
  \begin{xy}
  0;/r.10pc/: 
  (22,7)*+{\text{2nd order ODE, 2 independent variables}};
  (0,0)*+{\pout}="1";
  (22,0)*+{\pout}="2";
  (44,0)*+{\pin}="3";
  "1"; "2" **\dir{-};
  "2"; "3" **\dir{-};
\end{xy}
  &
  $\Proj{}T\Proj{3}$ &
  3,5  &
  $\begin{array}{c}
375\,c_{1}^3\,c_{2}-179\,c_{1}^5 \\
-51\,c_{1}^2\,c_{2}+17\,c_{1}^4+54\,c_{1}\,c_{3} \\
18\,c_{4}-3\,c_{1}^2\,c_{2}+c_{1}^4 \\
375\,c_{5}-c_{1}^5 \\
81\,c_{2}^2-66\,c_{1}^2\,c_{2}+13\,c_{1}^4 \\
1125\,c_{2}\,c_{3}-73\,c_{1}^5 \\
\end{array}
$  \\ \addlinespace[10pt]
  \begin{xy}
  0;/r.10pc/: 
  (22,7)*+{\text{Conformal geometry on 5-fold}};
  (0,0)*+{\pout}="1";
  (22,0)*+{\pin}="2";
  (44,0)*+{\pin}="3";
  (33,0)*+{>}="edge";
  "1"; "2" **\dir{-};
  {\ar@{=}; "2";"3"};
\end{xy}
  &
  $Q^5$ &
  5  &
  $\begin{array}{c}
c_{1}-5\,\varepsilon \\
c_{2}-11\,\varepsilon^2 \\
c_{3}-13\,\varepsilon^3 \\
c_{4}-9\,\varepsilon^4 \\
c_{5}-3\,\varepsilon^5 \\
\end{array}
$  \\ \addlinespace[10pt]
  \begin{xy}
  0;/r.10pc/: 
  (22,7)*+{\text{Contact path geometry on 5-fold}};
  (0,0)*+{\pout}="1";
  (22,0)*+{\pin}="2";
  (44,0)*+{\pin}="3";
  (33,0)*+{<}="edge";
  "1"; "2" **\dir{-};
  {\ar@{=}; "2";"3"};
\end{xy}
  &
  $\LagFl{1}{6}$&
  4,5  &
  $\begin{array}{c}
c_{1}-6\,\varepsilon \\
c_{2}-15\,\varepsilon^2 \\
c_{3}-20\,\varepsilon^3 \\
c_{4}-15\,\varepsilon^4 \\
c_{5}-6\,\varepsilon^5 \\
\end{array}
$  \\ \addlinespace[10pt]
  \begin{xy}
  0;/r.10pc/: 
  (44,7)*+{\text{Projective connection on 5-fold}};
  (0,0)*+{\pout}="1";
  (22,0)*+{\pin}="2";
  (44,0)*+{\pin}="3";
  (66,0)*+{\pin}="4";
  (88,0)*+{\pin}="5";
  "1"; "2" **\dir{-};
  "2"; "3" **\dir{-};
  "3"; "4" **\dir{-};
  "4"; "5" **\dir{-};
\end{xy}
  &
  $\Proj{5}$ &
  5  &
  $\begin{array}{c}
c_{1}-6\,\varepsilon \\
c_{2}-15\,\varepsilon^2 \\
c_{3}-20\,\varepsilon^3 \\
c_{4}-15\,\varepsilon^4 \\
c_{5}-6\,\varepsilon^5 \\
\end{array}
$  \\ \addlinespace[10pt]
  \begin{xy}
  0;/r.10pc/: 
  (0,0)*+{\pout}="1";
  (22,0)*+{\pout}="2";
  (11,0)*+{<}="edge";
  "1"; "2" **\dir3{-};
\end{xy}
  &
  &
  2,3,4,5,6  &
  $\begin{array}{c}
c_{1}-2\,\varepsilon \\
c_{2}-2\,\varepsilon^2 \\
180\,c_{3}\,\varepsilon^3-227\,\varepsilon^6 \\
c_{4}-2\,c_{3}\,\varepsilon+2\,\varepsilon^4 \\
49\,c_{5}-48\,c_{3}\,\varepsilon^2+54\,\varepsilon^5 \\
60\,c_{6}-\varepsilon^6 \\
90\,c_{3}^2-143\,\varepsilon^6 \\
\end{array}
$  \\ \addlinespace[10pt]
  \begin{xy}
  0;/r.10pc/: 
  (0,0)*+{\pout}="1";
  (22,0)*+{\pout}="2";
  (44,0)*+{\pout}="3";
  "1"; "2" **\dir{-};
  "2"; "3" **\dir{-};
\end{xy}
  &
  $\Fl{1,2,3}{4}$ &
  3,5,6  &
  $\begin{array}{c}
c_{1}-2\,\varepsilon \\
c_{2}-2\,\varepsilon^2 \\
18\,c_{3}\,\varepsilon^3-23\,\varepsilon^6 \\
c_{4}-2\,c_{3}\,\varepsilon+2\,\varepsilon^4 \\
5\,c_{5}-3\,c_{3}\,\varepsilon^2+3\,\varepsilon^5 \\
30\,c_{6}-\varepsilon^6 \\
45\,c_{3}^2-73\,\varepsilon^6 \\
\end{array}
$  \\ \addlinespace[10pt]
  \begin{xy}
  0;/r.10pc/: 
  (0,0)*+{\pin}="1";
  (22,0)*+{\pin}="2";
  (44,0)*+{\pout}="3";
  (33,0)*+{>}="edge";
  (33,7)*+{\text{Nondegenerate 3-plane field on 6-fold}};
  "1"; "2" **\dir{-};
  {\ar@{=}; "2";"3"};
\end{xy}
  &
  \NullFl{3}{7} &
  3,6  &
  $\begin{array}{c}
c_{1}-6\,\varepsilon \\
c_{2}-16\,\varepsilon^2 \\
c_{3}-24\,\varepsilon^3 \\
c_{4}-22\,\varepsilon^4 \\
c_{5}-12\,\varepsilon^5 \\
c_{6}-4\,\varepsilon^6 \\
\end{array}
$  \\ \addlinespace[10pt]
  \begin{xy}
  0;/r.10pc/: 
  (0,0)*+{\pin}="1";
  (22,0)*+{\pin}="2";
  (44,0)*+{\pout}="3";
  (33,0)*+{<}="edge";
  "1"; "2" **\dir{-};
  {\ar@{=}; "2";"3"};
\end{xy}
  &
  $\LagFl{3}{6}$ &
  6  &
  $\begin{array}{c}
c_{1}-4\,\varepsilon \\
2\,c_{2}-15\,\varepsilon^2 \\
8\,c_{3}\,\varepsilon-67\,\varepsilon^4 \\
4\,c_{4}-23\,\varepsilon^4 \\
4\,c_{5}-9\,\varepsilon^5 \\
2\,c_{6}-\varepsilon^6 \\
8\,c_{3}^2-555\,\varepsilon^6 \\
\end{array}
$  \\ \addlinespace[10pt]
  \begin{xy}
  0;/r.10pc/: 
  (33,7)*+{\text{Grassmann geometry on 6-fold}};
  (0,0)*+{\pin}="1";
  (22,0)*+{\pout}="2";
  (44,0)*+{\pin}="3";
  (66,0)*+{\pin}="4";
  "1"; "2" **\dir{-};
  "2"; "3" **\dir{-};
  "3"; "4" **\dir{-};
\end{xy}
  &
  $\Gr{2}{5}$ &
  6  &
  $\begin{array}{c}
c_{1}-5\,\varepsilon \\
5\,\varepsilon^3\,c_{2}-57\,\varepsilon^5 \\
c_{3}-15\,\varepsilon^3 \\
c_{4}-5\,c_{2}\,\varepsilon^2+45\,\varepsilon^4 \\
c_{5}-6\,\varepsilon^5 \\
c_{6}-2\,\varepsilon^6 \\
c_{2}^2-25\,c_{2}\,\varepsilon^2+155\,\varepsilon^4 \\
\end{array}
$  \\ \addlinespace[10pt]
  \begin{xy}
  0;/r.10pc/: 
  (18,26)*+{\text{Conformal geometry on 6-fold}};
  (0,0)*+{\pout}="1";
  (22,0)*+{\pin}="2";
  (33,19)*+{\pin}="3";
  (33,-19)*+{\pin}="4";
  "1"; "2" **\dir{-};
  "2"; "3" **\dir{-};
  "2"; "4" **\dir{-};
\end{xy}
  &
  $Q^6$ &
  6  &
  $\begin{array}{c}
c_{1}-6\,\varepsilon \\
c_{2}-16\,\varepsilon^2 \\
c_{3}-24\,\varepsilon^3 \\
c_{4}-22\,\varepsilon^4 \\
c_{5}-12\,\varepsilon^5 \\
c_{6}-4\,\varepsilon^6 \\
\end{array}
$  \\ \addlinespace[10pt]
  \begin{xy}
  0;/r.10pc/: 
  (55,7)*+{\text{Projective connection on 6-fold}};
  (0,0)*+{\pout}="1";
  (22,0)*+{\pin}="2";
  (44,0)*+{\pin}="3";
  (66,0)*+{\pin}="4";
  (88,0)*+{\pin}="5";
  (110,0)*+{\pin}="6";
  "1"; "2" **\dir{-};
  "2"; "3" **\dir{-};
  "3"; "4" **\dir{-};
  "4"; "5" **\dir{-};
  "5"; "6" **\dir{-};
\end{xy}
  &
  $\Proj{6}$ &
  6  &
  $\begin{array}{c}
c_{1}-7\,\varepsilon \\
c_{2}-21\,\varepsilon^2 \\
c_{3}-35\,\varepsilon^3 \\
c_{4}-35\,\varepsilon^4 \\
c_{5}-21\,\varepsilon^5 \\
c_{6}-7\,\varepsilon^6 \\
\end{array}
$  \\ \addlinespace[10pt]
  \begin{xy}
  0;/r.10pc/: 
  (0,0)*+{\pin}="1";
  (22,0)*+{\pout}="2";
  (44,0)*+{\pin}="3";
  (33,0)*+{>}="edge";
  "1"; "2" **\dir{-};
  {\ar@{=}; "2";"3"};
\end{xy}
  &
  $\NullFl{2}{7}$ &
  6,7  &
  $\begin{array}{c}
c_{1}-4\,\varepsilon \\
14\,\varepsilon^4\,c_{2}-107\,\varepsilon^6 \\
c_{3}-3\,c_{2}\,\varepsilon+14\,\varepsilon^3 \\
3\,c_{4}+c_{2}\,\varepsilon^2-28\,\varepsilon^4 \\
3\,c_{5}+17\,\varepsilon^3\,c_{2}-140\,\varepsilon^5 \\
14\,c_{6}-15\,\varepsilon^6 \\
14\,c_{7}-3\,\varepsilon^7 \\
3\,c_{2}^2-44\,c_{2}\,\varepsilon^2+161\,\varepsilon^4 \\
\end{array}
$  \\ \addlinespace[10pt]
  \begin{xy}
  0;/r.10pc/: 
  (0,0)*+{\pin}="1";
  (22,0)*+{\pout}="2";
  (44,0)*+{\pin}="3";
  (33,0)*+{<}="edge";
  (33,14)*+{\text{Nondegenerate 4-plane field on 7-fold,}};
  (33,7)*+{\text{a.k.a. quaternionic contact structure}};
  "1"; "2" **\dir{-};
  {\ar@{=}; "2";"3"};
\end{xy}
  &
  $\LagFl{2}{6}$ &
  4,7  &
  $\begin{array}{c}
c_{1}-5\,\varepsilon \\
7\,\varepsilon^4\,c_{2}-82\,\varepsilon^6 \\
c_{3}-2\,c_{2}\,\varepsilon+7\,\varepsilon^3 \\
3\,c_{4}-5\,c_{2}\,\varepsilon^2+14\,\varepsilon^4 \\
3\,c_{5}-10\,\varepsilon^3\,c_{2}+91\,\varepsilon^5 \\
7\,c_{6}-24\,\varepsilon^6 \\
7\,c_{7}-6\,\varepsilon^7 \\
3\,c_{2}^2-74\,c_{2}\,\varepsilon^2+455\,\varepsilon^4 \\
\end{array}
$  \\ \addlinespace[10pt]
  \begin{xy}
  0;/r.10pc/: 
  (0,0)*+{\pout}="1";
  (22,0)*+{\pin}="2";
  (44,0)*+{\pin}="3";
  (66,0)*+{\pout}="4";
  "1"; "2" **\dir{-};
  "2"; "3" **\dir{-};
  "3"; "4" **\dir{-};
\end{xy}
  &
  $\Fl{1,4}{5}$&
  6,7  &
  $\begin{array}{c}
c_{1}-4\,\varepsilon \\
7\,\varepsilon^4\,c_{2}-52\,\varepsilon^6 \\
c_{3}-3\,c_{2}\,\varepsilon+14\,\varepsilon^3 \\
c_{4}-5\,c_{2}\,\varepsilon^2+31\,\varepsilon^4 \\
c_{5}-5\,\varepsilon^3\,c_{2}+34\,\varepsilon^5 \\
7\,c_{6}-8\,\varepsilon^6 \\
7\,c_{7}-2\,\varepsilon^7 \\
c_{2}^2-17\,c_{2}\,\varepsilon^2+71\,\varepsilon^4 \\
\end{array}
$  \\ \addlinespace[10pt]
  \begin{xy}
  0;/r.10pc/: 
  (0,0)*+{\pout}="1";
  (22,0)*+{\pout}="2";
  (44,0)*+{\pin}="3";
  (66,0)*+{\pin}="4";
  "1"; "2" **\dir{-};
  "2"; "3" **\dir{-};
  "3"; "4" **\dir{-};
\end{xy}
  &
  $\Fl{1,2}{5}$ &
  4,7  &
  $\begin{array}{c}
c_{1}-2\,\varepsilon \\
126\,c_{2}\,\varepsilon^5-239\,\varepsilon^7 \\
-3942\,\varepsilon^4\,c_{2}+2160\,c_{3}\,\varepsilon^3+5083\,\varepsilon^6 \\
768\,c_{4}\,\varepsilon-816\,c_{3}\,\varepsilon^2+414\,\varepsilon^3\,c_{2}-215\,\varepsilon^5 \\
192\,c_{5}-48\,c_{3}\,\varepsilon^2-18\,\varepsilon^3\,c_{2}+65\,\varepsilon^5 \\
540\,c_{6}-54\,\varepsilon^4\,c_{2}+91\,\varepsilon^6 \\
378\,c_{7}-\varepsilon^7 \\
c_{2}^2-c_{4}+2\,c_{3}\,\varepsilon-6\,c_{2}\,\varepsilon^2+6\,\varepsilon^4 \\
1536\,c_{2}\,c_{3}-1104\,c_{3}\,\varepsilon^2-4878\,\varepsilon^3\,c_{2}+7247\,\varepsilon^5 \\
4320\,c_{2}\,c_{4}-12582\,\varepsilon^4\,c_{2}+20303\,\varepsilon^6 \\
4320\,c_{3}^2-16794\,\varepsilon^4\,c_{2}+26561\,\varepsilon^6 \\
378\,c_{3}\,c_{4}-181\,\varepsilon^7 \\
\end{array}
$  \\ \addlinespace[10pt]
  \begin{xy}
  0;/r.10pc/: 
  (33,7)*+{\text{Conformal geometry on 7-fold}};
  (0,0)*+{\pout}="1";
  (22,0)*+{\pin}="2";
  (44,0)*+{\pin}="3";
  (66,0)*+{\pin}="4";
  (55,0)*+{>}="edge";
  "1"; "2" **\dir{-};
  "2"; "3" **\dir{-};
  {\ar@{=}; "3";"4"};
\end{xy}
  &
  $Q^7$ &
  7  &
  $\begin{array}{c}
c_{1}-7\,\varepsilon \\
c_{2}-22\,\varepsilon^2 \\
c_{3}-40\,\varepsilon^3 \\
c_{4}-46\,\varepsilon^4 \\
c_{5}-34\,\varepsilon^5 \\
c_{6}-16\,\varepsilon^6 \\
c_{7}-4\,\varepsilon^7 \\
\end{array}
$  \\ \addlinespace[10pt]
  \begin{xy}
  0;/r.10pc/: 
  (33,7)*+{\text{Contact path geometry on 7-fold}};
  (0,0)*+{\pout}="1";
  (22,0)*+{\pin}="2";
  (44,0)*+{\pin}="3";
  (66,0)*+{\pin}="4";
  (55,0)*+{<}="edge";
  "1"; "2" **\dir{-};
  "2"; "3" **\dir{-};
  {\ar@{=}; "3";"4"};
\end{xy}
  &
  $\LagFl{1}{8}$ &
  6,7  &
  $\begin{array}{c}
c_{1}-8\,\varepsilon \\
c_{2}-28\,\varepsilon^2 \\
c_{3}-56\,\varepsilon^3 \\
c_{4}-70\,\varepsilon^4 \\
c_{5}-56\,\varepsilon^5 \\
c_{6}-28\,\varepsilon^6 \\
c_{7}-8\,\varepsilon^7 \\
\end{array}
$  \\ \addlinespace[10pt]
  \begin{xy}
  0;/r.10pc/: 
  (66,7)*+{\text{Projective connection on 7-fold}};
  (0,0)*+{\pout}="1";
  (22,0)*+{\pin}="2";
  (44,0)*+{\pin}="3";
  (66,0)*+{\pin}="4";
  (88,0)*+{\pin}="5";
  (110,0)*+{\pin}="6";
  (132,0)*+{\pin}="7";
  "1"; "2" **\dir{-};
  "2"; "3" **\dir{-};
  "3"; "4" **\dir{-};
  "4"; "5" **\dir{-};
  "5"; "6" **\dir{-};
  "6"; "7" **\dir{-};
\end{xy}
  &
  $\Proj{7}$ &
  7  &
  $\begin{array}{c}
c_{1}-8\,\varepsilon \\
c_{2}-28\,\varepsilon^2 \\
c_{3}-56\,\varepsilon^3 \\
c_{4}-70\,\varepsilon^4 \\
c_{5}-56\,\varepsilon^5 \\
c_{6}-28\,\varepsilon^6 \\
c_{7}-8\,\varepsilon^7 \\
\end{array}
$  \\
\end{longtable}
}}

\bibliographystyle{amsplain}
\bibliography{ChernCartan}
\end{document}